\tikzset{My Line Style/.style={smooth}}
\newcommand{\kk}{\Bbbk}
\def\SL{\operatorname{SL}}
\def\SL2{\operatorname{SL}_{2}(\kk)}
\def\GL2{\operatorname{GL}_2}
\def\INVSL2{$\kk[V]^{operatorname{SL}_{2}(\kk)}$}
\def\INVSO2{$\kk[V]^{operatorname{SO}_{2}(\kk)}$}
\def\INVGL2{$\kk[V]^{operatorname{GL}_{2}(\kk)}$}
\def\Hom{\operatorname{Hom}}
\def\GL{\operatorname{GL}}
\def\SL{\operatorname{SL}}
\def\id{\operatorname{id}}
\def\codom{\operatorname{codom}}
\def\Id{\operatorname{Id}}
\def\res{\operatorname{res}}
\def\Tr{\operatorname{Tr}}
\def\stmod{\operatorname{stmod}}
\def\coker{\operatorname{coker}}
\newtheorem{Lemma}{Lemma}[section]
\newtheorem{Theorem}[Lemma]{Theorem}
\newtheorem{Corollary}[Lemma]{Corollary}
\newtheorem{Prop}[Lemma]{Proposition}
\newtheorem{Conj}[Lemma]{Conjecture}
\theoremstyle{definition}
\theoremstyle{remark}
  \newtheorem{rem}[Lemma]{Remark}
\newtheoremstyle{Acknowledgments}
  {}
    {}
     {}
     {}
    {\bfseries}
    {}
     {.5em}
     {\thmname{#1}\thmnumber{ }\thmnote{ (#3)}}
\theoremstyle{Acknowledgments}
\newtheorem{ack}{Acknowledgments.}
\title[Symmetric powers for elementary abelian $p$-groups]
{Symmetric powers and modular invariants of elementary abelian $p$-groups}
\author{Jonathan Elmer}
\address{Middlesex University\\
The Burroughs, London\\
NW4 4BT}
\email{j.elmer@mdx.ac.uk}
\date{\today}
\subjclass[2010]{20C20, 13A50}
\keywords{modular representation theory, invariant theory, elementary abelian $p$-groups, symmetric powers, relative stable module category}
\begin{document}
\maketitle

\begin{abstract} Let $E$ be a elementary abelian $p$-group of order $q=p^n$. Let $W$ be a faithful indecomposable representation of $E$  with dimension 2 over a field $\kk$ of characteristic $p$, and let $V= S^m(W)$ with $m<q$. We prove that the rings of invariants $\kk[V]^E$ are generated by elements of degree $\leq q$ and relative transfers. This extends recent work of Wehlau \cite{WehlauCyclicViaClassical} on modular invariants of cyclic groups of order $p$. If $m<p$ we prove that $\kk[V]^E$ is generated by invariants of degree $\leq 2q-3$, extending a result of Fleischmann, Sezer, Shank and Woodcock \cite{FleischmannSezerShankWoodcock} for cyclic groups of order $p$ . Our methods are primarily representation-theoretic, and along the way we prove that  for any $d<q$ with $d+m \geq q$, $S^d(V^*)$ is projective relative to the set of subgroups of $E$ with order $ \leq m$, and that the sequence $S^d(V^*)_{d \geq 0}$ is periodic with period $q$, modulo summands which are projective relative to the same set of subgroups. These results extend results of Almkvist and Fossum \cite{AlmkvistFossum} on cyclic groups of prime order. 
\end{abstract}

\section{Introduction}
\subsection{Modular invariant theory and degree bounds}

Let $G$ be a finite group and $\kk$ a field of arbitrary characteristic. Let $V$ be a finite-dimensional represenation of $G$, which in this article will always mean a left $\kk G$-module. We denote by $\kk[V]$ the $\kk$-algebra of polynomial functions $V \rightarrow \kk$. This itself becomes a $\kk G$-module with the action given by $$(\sigma f)(v) = f(\sigma^{-1}v)$$ for $f \in \kk[V], v  \in V$ and $\sigma \in G$. 

Let $\{x_1,x_2,\ldots,x_n\}$ be a basis for $V^*$. If $\kk$ is infinite, we can identify $\kk[V]$ with the polynomial ring $\kk[x_1,x_2,\ldots,x_n]$; this is graded by total degree, and the action of $G$ on $\kk[x_1,x_2,\ldots,x_n]$ is by graded algebra automorphisms. As a $\kk G$-module, the homogeneous component of degree $d$ in $\kk[V]$ is isomorphic to $S^d(V^*)$, the $d$th symmetric power of $V^*$.

The set of fixed points $\kk[V]^G$ forms a $\kk$-subalgebra of $\kk[V]$ called the \emph{algebra of invariants}. This is the central object of study in invariant theory. The most natural goal in invariant theory is to compute algebra generators of $\kk[V]^G$. This is a hard problem in general, especially if $|G|$ is divisible by $\kk$. For example, even for cyclic groups of order $p$ the list of modular representations $V$ for which algebra generators of $\kk[V]^G$ are known is rather short, see \cite{WehlauCyclicViaClassical}. Some general results are known, however. Famously, Noether proved that the ring of invariants $\mathbb{C}[V]^G$ has a generating set consisting of invariants of degree $\leq |G|$, for any representation of $G$ over $\mathbb{C}$. For this reason, the minimum $d$ such that $\oplus_{i=0}^d \kk[V]^G_i$ generates $\kk[V]^G$ as a $\kk$-algebra is called the \emph{Noether bound} and written as $\beta(\kk[V]^G)$. Her results were extended independently by Fleischmann \cite{FleischmannNoetherBound} and Fogarty \cite{FogartyNoetherBound} to any field whose characteristic does not divide the order of $G$, the so called non-modular case. In contrast, it is known that in the modular case no general bound on the degrees of generators which depends on $|G|$ alone exists. Recent work of Symonds \cite{SymondsRegularity} has shown that, independently of the characteristic of $\kk$, $\kk[V]^G$ is generated by invariants of degree at most $\max(|G|, (|G|-1)\dim(V))$.

Fleischmann, Sezer, Shank and Woodcock \cite{FleischmannSezerShankWoodcock} proved that if $V$ is an indecomposable modular representation of a cyclic group of order $p$, then $\kk[V]^G$ is generated by invariants of degree at most $2p-3$. In particular this shows that Symonds' bound is far from sharp. One goal of this article is to prove the following result:

\begin{Theorem}\label{thm:2q-3} Let $E$ be an elementary abelian $p$-group of order $q$, $\kk$ an infinite field of characteristic $p$, $W$ a faithful indecomposable $\kk E$-module with dimension 2. Let $V=S^m(W)$ with $m<p$. Then $\kk[V]^E$ is generated by invariants of degree $\leq 2q-3$. 
\end{Theorem}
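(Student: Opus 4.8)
The plan is to reduce the problem, via the structural theorem of this paper, to a statement about the transfer ideal, and then to bound the ideal generators of that ideal using the projectivity and periodicity results. Write $\mathcal{T} := \Tr^E(\kk[V])$; by the projection formula $\Tr^E(fg) = g\,\Tr^E(f)$ for $g \in \kk[V]^E$, this is a homogeneous ideal of $\kk[V]^E$, contained in $\kk[V]^E_+$ since $\Tr^E$ kills $\kk$. As $m < p$, the only subgroup of $E$ of order $< m$ is the trivial one, so in this situation every relative transfer is the ordinary transfer $\Tr^E$; hence the structural theorem says that $\kk[V]^E$ is generated as a $\kk$-algebra by $\kk[V]^E_{\le q}$ together with $\mathcal{T}$. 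A routine induction on degree then yields $\beta(\kk[V]^E) \le \max\{q,\beta\}$, where $\beta$ denotes the top degree of a minimal homogeneous ideal generating set of $\mathcal{T}$. Since $q \le 2q-3$ for $q \ge 3$ --- the residual case $q = 2$ forcing $p = 2$, $m = 1$, $V = W$, which is checked by hand --- it suffices to prove that $\mathcal{T}$ is generated as an ideal by elements of degree $\le 2q-3$.

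To set up for that, recall that for the $p$-group $E$ the transfer $\Tr^E\colon M \to M^E$ is multiplication by the one-dimensional socle of $\kk E$, so it surjects onto $M^E$ exactly when $M$ is projective (equivalently free, $\kk E$ being local), and otherwise its image is controlled by the free part of $M$. Combining the projectivity result, that $S^d(V^*)$ is free whenever $q - m \le d < q$, with the period-$q$ periodicity modulo free summands, we get that $S^d(V^*)$ is free precisely for $d \equiv q-m,\dots,q-1 \pmod q$, so that $\mathcal{T}_d = \kk[V]^E_d$ in those degrees, while in the remaining residue classes $S^{d+q}(V^*)$ differs from $S^d(V^*)$ by the addition of an explicitly determined free summand. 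Two invariants drive the reduction of generators: the fixed linear form $\ell \in (V^*)^E$, for which $\ell\,\mathcal{T}_{d-1}\subseteq\mathcal{T}_d$, and the norm $N \in \kk[V]^E_q$, namely the orbit product of a linear coordinate on $V$ with trivial $E$-stabilizer (such a coordinate exists because $W$ is faithful), which is a nonzerodivisor whose leading monomial is the $q$-th power of that coordinate and for which $N\,\mathcal{T}_{d-q}\subseteq\mathcal{T}_d$.

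The crux, and the step I expect to be the main obstacle, is to show that $\mathcal{T}_d \subseteq \sum_{0 < e < d}\mathcal{T}_{d-e}\,\kk[V]^E_e$ for all $d > 2q-3$. I would work on a monomial basis of $\kk[V]$: given a monomial $w$ of degree $d > 2q-3$ with $\Tr^E(w)\ne 0$, first use $\ell$ to reduce to the case $\ell\nmid w$, and then use $N$ (via its leading monomial) to lower the exponent of the distinguished coordinate below the value it takes in $N$; each such step rewrites $\Tr^E(w)$ as $\ell\,\Tr^E(w')$ or $N\,\Tr^E(w')$ plus a sum of transfers of monomials of strictly smaller degree. The real difficulty lies in controlling these lower-degree correction transfers and in verifying that every monomial surviving the reductions has degree at most $2q-3$; this is precisely where the explicit $\kk E$-module structure of $V = S^m(W)$ --- and the ``classical covariant'' features of its symmetric powers that the hypothesis $m < p$ forces --- together with a well-chosen term order and a downward induction on $d$, must be used. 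I expect the number $2q-3 = (q-1)+(q-2)$ to emerge as the largest degree in which a monomial spanning a free summand of some $S^d(V^*)$ cannot be further reduced using $\ell$ and $N$.
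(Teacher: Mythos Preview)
Your reduction is correct and is exactly what the paper does: with $m<p$ one has $k=1$ in Proposition~\ref{prop:genset}, so $\mathcal{X}_{k-1}=\{1\}$ and $\kk[V]^E$ is generated by $N=N_E(x_0)$ (degree $q$), invariants of degree $<q-m$, and ordinary transfers. The remaining task is to bound the degrees of ideal generators of $\mathcal{T}=\Tr^E(\kk[V])$, which (as you note) is the same as bounding the top degree of the coinvariants $\kk[V]_E=\kk[V]/\mathcal{H}$, since $\Tr^E$ is $\kk[V]^E$-linear.

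The gap is in this last step. Your two tools, $\ell=x_m$ and $N$, only control the $x_m$-degree and the $x_0$-degree of a monomial: after your reductions a monomial has $x_m$-exponent $0$ and $x_0$-exponent $\le q-1$, but for $m\ge 2$ the exponents of $x_1,\dots,x_{m-1}$ are completely unconstrained. Nothing in the projectivity/periodicity package bounds these, and the hoped-for ``classical covariant features'' are not a substitute for an argument. As stated, your procedure does not terminate at $2q-3$.

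The paper fills this gap with a different, explicit construction (Proposition~\ref{prop:coinvbound}): for \emph{every} monomial $u=\prod_{j=1}^{q-1} x_{i(j)}$ of degree $q-1$ in the variables $x_1,\dots,x_m$, it produces an element of $\mathcal{H}$ whose lead term is $u$. One shifts each factor down, $u'_j=x_{i(j)-1}$, and sets
\[
F=\sum_{\alpha\in E}\prod_{j=1}^{q-1}\bigl(u'_j-\alpha\cdot u'_j\bigr).
\]
Expanding the product over subsets shows $F$ is a $\kk[V]$-combination of transfers $\Tr^E(\cdot)$, hence $F\in\mathcal{H}$. On the other hand, the lead term of $u'_j-\alpha\cdot u'_j$ is $\alpha\,i(j)\,x_{i(j)}$; here $1\le i(j)\le m<p$, so $i(j)\ne 0$ in $\kk$, and summing over $\alpha$ contributes the nonzero scalar $\sum_{\alpha\in E}\alpha^{q-1}$ (Corollary~\ref{cor:powersums}). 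Thus the lead term of $F$ is a nonzero scalar times $u$. Combining this with the fact that $N$ has lead term $x_0^q$, any monomial outside the lead-term ideal of $\mathcal{H}$ has $x_0$-degree $\le q-1$ and degree $\le q-2$ in $x_1,\dots,x_m$, giving the bound $(q-1)+(q-2)=2q-3$. Note that the hypothesis $m<p$ is used twice: once to force $\mathcal{X}_{k-1}=\{1\}$, and once to make the coefficients $i(j)$ invertible in the lead-term computation.
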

Note that if $E = C_p$ is a cyclic group of order $p$ there is only one isomorphism class of faithful indecomposable representation of dimension 2. Furthermore, if $V$ is any indecomposable $\kk C_p$-module then $V=S^m(W)$ for some $m<p$. So the above generalises \cite[Proposition~1.1(a)]{FleischmannSezerShankWoodcock}.

\subsection{The transfer}\label{subsec:transfer}
Let $G$ be a finite group, $H \leq G$ and $M$ a $\kk G$-module. We denote the set of $G$-fixed points in $M$ by $M^G$. There is a $\kk G$-map $M^H \rightarrow M^G$ defined as follows:
\[Tr^G_H(f) = \sum_{\sigma \in S} \sigma f\] where $f \in M$ and $S$ is a left-transversal of $H$ in $G$. This is called the relative trace or transfer. It is clear that the map is independent of the choice of $S$. If $H=1$ we usually write this as $\Tr^G$ and call it simply the trace or transfer.

In case $M=\kk[V]$ this restricts to a degree-preserving $\kk[V]^G$-homomorphism $\kk[V]^H \rightarrow \kk[V]^G$, whose image is an ideal of $\kk[V]^G$. We denote its image by $I^G_H$ . More generally, given a set $\mathcal{X}$ of subgroups of $G$, we set $I^G_{\mathcal{X}} = \sum_{H \in \mathcal{X}} I^G_H$.

If $|G:H|$ is not divisible by char$(\kk)$ then $\Tr^G_H$ is surjective. This has many nice consequences; in particular, it implies that $\kk[V]^G$ is a direct summand of $\kk[V]^H$ as a $\kk[V]^G$-module, and hence that $\kk[V]^G$ is Cohen-Macaulay if $\kk[V]^H$ is. It also shows that in the non-modular case, every invariant lies in the image of the transfer map and every ring of invariants is Cohen-Macaulay.

From now on suppose that $G$ is divisible by $p$ = char$(\kk)>0$. Choose a Sylow-$p$-subgroup $P$ of $G$ and denote by $I^G_{<P}$ the sum of all $I^G_Q$ with $Q<P$. It is easily shown that $I^G_{<P}$ is independent of the choice of $P$. The ring $\kk[V]^G/I^G_{<P}$ has attracted some attention in recent years. The prevailing idea is that it behaves in many ways like a non-modular ring of invariants. For example, Totaro \cite{totaro} has shown that $\kk[V]^G/I^G_{<P}$ is a Cohen-Macaulay ring for any $G$ and $V$, generalising earlier work of Fleischmann \cite{FleischmannTransfer}, where $I^G_{<P}$ is replaced by its radical. In the same spirit is the following conjecture, reported by Wehlau \cite{QueensReport}.

\begin{Conj}\label{conj:queens} Let $G$ be a finite group and $V$ a $\kk G$-module. Then $\kk[V]^G/I^G_{<P}$ is generated by invariants of degree $\leq |G|$.
\end{Conj}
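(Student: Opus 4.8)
This conjecture is open; here is the strategy I would pursue, which I believe goes through in the setting of this paper. One naturally first tries to pass to a Sylow $p$-subgroup $P$ of $G$. Since $[G:P]$ is prime to $p$, $\Tr^G_P\colon \kk[V]^P\to\kk[V]^G$ is a surjective, degree-preserving $\kk[V]^G$-module map; transitivity of the transfer gives $\Tr^G_P(I^P_{<P})=I^G_{<P}$, and a Mackey-style count of the $P$-orbits on $G/Q$ (each of size $>1$ when $Q<P$, since $|gQg^{-1}|=|Q|<|P|$) shows in addition that $I^G_{<P}\subseteq I^P_{<P}$ and hence $\kk[V]^G\cap I^P_{<P}=I^G_{<P}$. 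Thus $\kk[V]^G/I^G_{<P}$ embeds as a graded subring of $\kk[V]^P/I^P_{<P}$ and is a direct summand of it as a module over itself, split by $\tfrac1{[G:P]}\Tr^G_P$. This is still not enough to transport an algebra-generation bound, since that retraction is only $\kk$-linear and a \emph{ring} retraction $\kk[V]^P\twoheadrightarrow\kk[V]^G$ need not exist; the reduction to $p$-groups is therefore genuinely non-formal. I would nonetheless concentrate on the case $G=P$ a $p$-group --- in particular $P=E$ elementary abelian --- where the essential phenomenon lives and $I^P_{<P}=\sum_{Q<P}I^P_Q$.

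The representation-theoretic mechanism I have in mind is the following. For a $\kk P$-module $M$ put $\overline{H}(M)=M^P\big/\sum_{Q<P}\operatorname{im}\bigl(\Tr^P_Q\colon M^Q\to M^P\bigr)$; applied degree by degree to $M=S^d(V^*)$ this recovers the degree-$d$ component of $\kk[V]^P/I^P_{<P}$. The key point is that $\overline{H}$ is additive and kills every module projective relative to a family of \emph{proper} subgroups of $P$ (such a module is a summand of a direct sum of modules $\operatorname{Ind}_Q^P N$ with $Q<P$, and $(\operatorname{Ind}_Q^P N)^P=\operatorname{im}\Tr^P_Q$), so $\overline{H}$ factors through the relative stable module category modulo such summands and carries isomorphisms there to honest isomorphisms. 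The plan is then: (i) exhibit a distinguished invariant $N\in\kk[V]^P$ of degree $|P|$, a norm-type element adapted to $V$; (ii) prove that $N\cdot(-)\colon S^d(V^*)\to S^{d+|P|}(V^*)$, which is a $\kk P$-linear injection, has relatively projective cokernel for every $d\ge0$, i.e. that $N$ implements the shift by $|P|$ in the relative stable category; (iii) conclude that $N\cdot(-)\colon(\kk[V]^P/I^P_{<P})_d\to(\kk[V]^P/I^P_{<P})_{d+|P|}$ is an isomorphism for every $d\ge0$; (iv) deduce by induction on degree that every invariant of degree $\ge|P|$ lies, modulo $I^P_{<P}$, in the subalgebra generated by $N$ and the invariants of degree $<|P|$, so that $\kk[V]^P/I^P_{<P}$ is generated in degrees $\le|P|$ --- equivalently $\kk[V]^P$ is generated by $N$, by invariants of degree $<|P|$, and by relative transfers.

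For $P=E$ and $V=S^m(W)$ with $m<q=|E|$, the input to step (ii) is exactly what this paper establishes: when $d<q$ and $d+m\ge q$ the module $S^d(V^*)$ is already projective relative to the subgroups of order $<m$ (each of which is proper in $E$), and the sequence $\bigl(S^d(V^*)\bigr)_{d\ge0}$ is periodic of period $q$ modulo summands projective relative to that same family. Feeding these into the mechanism --- and checking that the periodicity is realized by multiplication by the relevant degree-$q$ norm $N$ --- gives generation in degrees $\le q=|E|$, which is the abstract's assertion that $\kk[V]^E$ is generated by elements of degree $\le q$ and relative transfers; as a by-product $\kk[V]^E/I^E_{<E}$ vanishes in degrees $q-m\le d\le q-1$. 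The main obstacle to the general conjecture is step (ii): the indecomposable decomposition of $S^d(V^*)$ is understood only for very special pairs $(P,V)$, and for an arbitrary $p$-group it is not clear that $\bigl(S^d(V^*)\bigr)_{d\ge0}$ becomes periodic with period dividing $|P|$ in any relative stable category, let alone that a single degree-$|P|$ invariant implements that periodicity. Failing this, one would have to argue more computationally, for instance adapting Fleischmann's proof of the Noether bound and letting $I^P_{<P}$ absorb the terms a characteristic-zero Reynolds operator would otherwise handle; this modular bookkeeping is where I expect the crux to lie.
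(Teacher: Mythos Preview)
This statement is recorded in the paper as a \emph{conjecture}, not a theorem, so there is no proof in the paper to compare against; you correctly flag this at the outset. What the paper does prove is the special case $G=E$ elementary abelian and $V=S^m(W)$ with $m<q$ (Theorem~\ref{thm:quotientofkvg}), and your steps (i)--(iv) recover exactly the paper's argument for that case: the distinguished invariant $N$ is the orbit product $N_E(x_0)$ of degree $q$; the decomposition $S^d(V^*)\cong N\otimes S^{d-q}(V^*)\oplus B_{d,m}$ of \eqref{decomp} together with Proposition~\ref{special}(i) (which says $B_{d,m}$ is projective relative to proper subgroups once $d+m\ge q$) is precisely your step (ii); and Proposition~\ref{prop:genset} carries out your inductive step (iv), writing $f=N^s f'+b$ with $b\in B_{d,m}$ and invoking Lemma~\ref{relprojinduced} to place $b$ in $I^E_{<E}$. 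Your functor $\overline{H}$ is a clean abstract repackaging of this, but the underlying mechanism is the same. Your diagnosis of the general obstruction --- that one does not know periodicity of $S^d(V^*)$ modulo relatively projective summands for arbitrary $(P,V)$, nor that a single degree-$|P|$ norm realises it --- matches the paper's scope: indeed the paper only establishes the needed periodicity and relative projectivity (Theorems~\ref{thm:relprojinhighdegrees} and~\ref{thm:periodicmodulorelproj}) for this particular family of $E$-modules, and explicitly notes after Proposition~\ref{prop:genset} that it is unknown whether the argument extends to arbitrary modular representations of $E$.
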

 
It had earlier been shown that this holds whenever $V$ is an indecomposable representation of a cyclic group of order $p$. In the present article we prove
\begin{Theorem}\label{thm:quotientofkvg} Let $E$ be an elementary abelian $p$-group of order $q$, $\kk$ an infinite field of characteristic $p$, $W$ a faithful indecomposable $\kk E$-module with dimension 2. Let $V=S^m(W)$ with $m<q$. Then $\kk[V]^E/I^E_{<E}$ is generated by invariants of degree $\leq q$. 
\end{Theorem}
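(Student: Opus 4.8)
The plan is to translate the question about generators of $\kk[V]^E/I^E_{<E}$ into a representation-theoretic statement about the symmetric powers $S^d(V^*)$ and then exploit the periodicity results advertised in the abstract. Recall that $\kk[V]^E$ in degree $d$ is $(S^d(V^*))^E$, and that $I^E_{<E}$ in degree $d$ is the sum of the images of the relative transfers $\Tr^E_Q$ for $Q<E$. Since $E$ is elementary abelian, every proper subgroup $Q$ has index at least $p$, and the subgroups of order $<m$ that appear in the relative-projectivity statement are exactly a convenient cofinal family for our purposes. The key observation is that if $S^d(V^*) = P \oplus M$ where $P$ is projective relative to some set $\mathcal{X}$ of proper subgroups, then $(S^d(V^*))^E = P^E \oplus M^E$, and $P^E$ lies inside $I^E_{\mathcal{X}} \subseteq I^E_{<E}$ because fixed points of a relatively $\mathcal{X}$-projective module are hit by the corresponding relative transfers. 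Hence, modulo $I^E_{<E}$, the invariants in degree $d$ are controlled by the non-relatively-projective summand $M$ only.

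**Using periodicity to reduce to small degrees.** By the periodicity result stated in the abstract, the sequence $S^d(V^*)$ for $d \geq 0$ is periodic with period $q$, up to summands projective relative to the set of subgroups of order $<m$ (hence proper, since $m<q$). Concretely this means that for every $d \geq q$ there is an isomorphism $S^d(V^*) \cong S^{d-q}(V^*) \oplus (\text{relatively projective})$. The plan is to use this to show that any invariant of degree $d \geq q+1$ is, modulo $I^E_{<E}$, expressible in terms of invariants of strictly smaller degree. More precisely, I would argue that the natural multiplication map $\kk[V]^E_{d-q} \otimes \kk[V]^E_{q} \to \kk[V]^E_{d}$, or a suitable variant using an invariant of degree $q$ that witnesses the periodicity, surjects onto $\kk[V]^E_d$ modulo $I^E_{<E}$. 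The relative-projectivity result for $d<q$ with $d+m \geq q$ is exactly what guarantees that the "error terms" in each step of the periodicity land in $I^E_{<E}$ and do not leak back in. Iterating, every invariant is congruent modulo $I^E_{<E}$ to a polynomial in invariants of degree $\leq q$, which is precisely the claimed bound.

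**Locating the degree-$q$ generator.** A crucial ingredient is to exhibit an honest invariant of degree $q$ (or at most $q$) that implements the shift $S^{d}(V^*) \rightsquigarrow S^{d-q}(V^*)$ on invariants. For elementary abelian $p$-groups acting via $W = $ the two-dimensional faithful indecomposable, there is a standard candidate: the norm-type or top-degree invariant arising from the orbit product of a linear form, which has degree exactly $q = |E|$; alternatively, the bottom component of $V^*$ is one-dimensional and its $E$-orbit product is an invariant of degree $q$. I would check that multiplication by this invariant, composed with the periodicity isomorphism, induces (modulo relatively projective summands, hence modulo $I^E_{<E}$) an isomorphism on the relevant graded pieces of invariants — this is really a statement that the periodicity isomorphism can be realized by multiplication in $\kk[V]$ rather than being merely abstract.

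**Main obstacle.** The hardest part will be the last point: ensuring that the abstract $\kk E$-module periodicity $S^d(V^*) \cong S^{d-q}(V^*) \oplus (\text{rel.\ proj.})$ is compatible with the ring structure of $\kk[V]$, i.e.\ that it is induced by multiplication by a fixed invariant of degree $q$ rather than by some non-multiplicative module map. If the periodicity is only known abstractly, one must work harder — e.g.\ track the image of a fixed-point generator of the period-$q$ summand under multiplication, and verify it is not swallowed into $I^E_{<E}$ in the next period — and here the relative-projectivity statement for the range $d<q$, $d+m\geq q$ does the essential bookkeeping, since it pins down precisely which intermediate symmetric powers are relatively projective and hence contribute nothing modulo the transfer ideal. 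Managing the induction cleanly across the boundary where $d+m$ crosses multiples of $q$, and confirming that no new invariants outside the subalgebra generated in degrees $\leq q$ appear at those boundaries, is where the real care is needed.
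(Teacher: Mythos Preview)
Your proposal is correct and follows essentially the same route as the paper. The ``main obstacle'' you flag --- whether the periodicity is compatible with the ring structure --- dissolves in the paper's treatment: the decomposition $S^d(V^*) \cong N_E(x_0)^s \otimes S^{d'}(V^*) \oplus B_{d,m}$ is constructed by long division by the degree-$q$ orbit product $N_E(x_0)$, so it is multiplicative by definition; uniqueness of division with remainder then forces both the quotient and the remainder of an invariant to be invariant, and the remainder $b \in B_{d,m}$ lies in $I^E_{<E}$ by the relative projectivity of $B_{d,m}$ (which holds for all $d$ with $d+m \geq q$, not only $d<q$).
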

In other words, this class of representations of elementary abelian $p$-groups satistfies Conjecture \ref{conj:queens}. Once more, as every indecomposable representation of a cyclic group can be written as a  symmetric power of the unique indecomposable representation of degree 2, this is a generalisation of the earlier result.

\subsection{Symmetric powers and relative projectivity}\label{subsec:sympowers}

 Let $G$ be a finite group and let $V$ and $W$ be  finite-dimensional representations of $G$ over a field $\kk$. Let $v_1,v_2,\ldots, v_m$ and $w_1,w_2,\ldots, w_n$ be bases of $V$ and $W$ over $\kk$. Then the tensor product $V \otimes W$ of $V$ and $W$ is the $\kk$-vector space spanned by elements of the form $v_i \otimes w_j$, where scalar multiplication satisfies $\lambda (v_i \otimes w_j) = (\lambda v_i) \otimes w_j = v_i \otimes (\lambda w_j)$ for all $\lambda \in \kk$.  There is a linear action of $G$ on the space defined by $g(v \otimes w) = gv \otimes w + v \otimes gw$. We can take the tensor product of $V$ with itself, and iterate the construction $d$ times to obtain, for any natural number $d$, a module $T^d(V) = V \otimes \cdots \otimes V$, called the \emph{$d$th tensor power} of $V$. Formally, the $d$th \emph{symmetric power} $S^d(V)$ of $V$ is defined to be the quotient of $T^d(V)$ by the subspace  generated by elements of the form $v_1 \otimes \cdots \otimes v_d - v_{\sigma(1)} \otimes \ldots \otimes v_{\sigma(d)}$ where $\sigma \in \Sigma_d$, the symmetric group on $\{1,2,\ldots, d\}$. We have $S^0(V) \cong \kk$ for any $V$, and we use the convention that $S^d(V)=0$ for negative values of $d$.

Symmetric powers of indecomposable representations are not indecomposable in general, and a central problem in representation theory is to try to understand their indecomposable summands. If $|G|$ is invertible in $\kk$ then this is largely a matter of character theory. The first authors to consider the problem in the modular case were Almkvist and Fossum \cite{AlmkvistFossum}. In this remarkable work, the authors give formulae for the indecomposable summands of any representation of the form $V \otimes W$, $S^d(V)$ or $\Lambda^d(V)$ (exterior power), where $V$ and $W$ are indecomposable representations of a cyclic group of order $p$ over a field $\kk$ of characteristic $p$. Some of these formulae were generalised to the case of finite groups whose Sylow-$p$-subgroup is cyclic by Hughes and Kemper \cite{HughesKemper}, and to cyclic 2-groups in \cite{HimstedtSymonds}. 

Now let $H \leq G$ and let $M$ be a $\kk H$-module. Then $\kk G \otimes_{\kk H} M$ is naturally a $\kk G$-module, which we call the $\kk G$-module \emph{induced} from $M$, and write as $M \uparrow^G_H$. When $\alpha: M \rightarrow N$ is a $\kk H$-homomorphism we define $\alpha \uparrow^G_H: M\uparrow^G_H \rightarrow N\uparrow^G_H$ by $\alpha \uparrow^G_H := \id_{\kk G} \otimes_{\kk H} \alpha$.

A $\kk G$-module $M$ which is a direct summand of  $M \downarrow_H \uparrow^G_H$ is said to be projective relative to $H$, or simply projective if $H=1$. Other equivalent definitions will be given in Section \ref{sec:relstabmodcat}. More generally, for a set $\mathcal{X}$ of subgroups of $G$, a $\kk G$-module $M$ is said to be projective relative to $\mathcal{X}$ if it is a direct summand of $\oplus_{X \in \mathcal{X}} M \downarrow_X \uparrow_X^G$.

We will also show in section \ref{sec:relstabmodcat} that if $M$ is projective relative to $\mathcal{X}$ then $M^G = \sum_{X \in \mathcal{X}} \Tr^G_X(M^X)$. Consequently, elements of $I^G_{\mathcal{X}}$ are contained in summands of $\kk[V]$ which are projective relative to $\mathcal{X}$. 

The following results of Almkvist and Fossum concerning representations of cyclic groups of prime order are of particular interest to us.

\begin{Theorem}[Almkvist and Fossum]\label{thm:cyclicanalog}
Let $G=C_p$ be a cyclic group of prime order $p$ and let $\kk$ be a field of characteristic $p$. Let $V$ be the unique indecomposable representation of $G$ over $\kk$ with dimension 2 (with action given by a Jordan block of size two).

\begin{enumerate}
\item[(i)] (Projectivity)  Suppose $m,d<p$ and $m+d \geq p$. Then $S^d(S^m(V))$ is projective.

\item[(ii)] (Periodicity)  For any $m,d<p$ and any $r$ we have a $\kk G$-isomorphism $S^{pr+d}(S^m(V)) \cong S^d(S^m(V)) + \ \text{projective modules}$. 

\end{enumerate}
\end{Theorem}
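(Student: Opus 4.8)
The plan is to exploit the very explicit structure of the $\kk C_p$-module category. Write $\sigma$ for a generator of $G=C_p$ and set $N=\sigma-1$ acting on $V$; then the indecomposable modules are exactly $V_1,V_2,\ldots,V_p$ where $V_j$ is the Jordan block of size $j$, with $V_p=\kk C_p$ the unique projective (= free) indecomposable. Here $V=V_2$. The key numerical input is the classical decomposition of a tensor product of two Jordan blocks (the ``Clebsch–Gordan'' rule for $\kk C_p$): for $1 \le a \le b \le p$,
\[
V_a \otimes V_b \;\cong\; \bigoplus_{i=0}^{a-1} V_{\,b-a+1+2i}
\]
where every summand with index $> p$ is to be replaced by a free module of the same total dimension (equivalently, $V_r$ for $r>p$ is interpreted via $V_r \cong V_{r-p} \oplus V_p^{\oplus?}$ in the appropriate truncated sense). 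I would first recall or cite this rule and, crucially, the refinement that a summand $V_r$ with $r \ge p$ contributes only \emph{projective} (free) summands to the decomposition. From this one reads off immediately when a tensor product is projective: $V_a \otimes V_b$ is projective as soon as $a+b \ge p+1$, and more precisely $V_a\otimes V_b$ is free iff $a+b > p$ — wait, one must be careful about the boundary, so I would verify that $V_a \otimes V_{p+1-a} \cong V_p^{\oplus a}$ is free, and that $a+b \ge p+1$ forces every index $b-a+1+2i \ge b-a+1 \ge p+1-2(a-1)$... so in fact one needs the sharper statement that each summand $V_{b-a+1+2i}$ has index $\ge p$ when $a+b\ge p+1$; I expect this to follow since the smallest index is $b-a+1$ and $a+b \ge p+1$ gives $b \ge p+1-a$, hence $b-a+1 \ge p+2-2a$, which is not obviously $\ge p$, so the correct reduction is instead to induct.

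For part (i), I would proceed by induction on $m$. The base case $m=1$ is $S^d(V_2)$, whose decomposition is classical (this is the Almkvist–Fossum computation for symmetric powers of $V_2$, or can be obtained from the fact that $S^d(V_2) \cong V_{d+1}$ for $d < p$ and is free-modulo-$V_{d+1-p}$-type behaviour for $d \ge p$); the hypothesis $1+d \ge p$ then gives $S^d(V_2) = V_{d+1}$ with $d+1 \ge p$, which is projective. For the inductive step, I would use the standard short exact sequences relating $S^d(S^m(V))$ to symmetric powers involving $S^{m-1}(V)$ — concretely the exact sequence
\[
0 \to S^{d}(S^{m-1}(V)) \to S^{d}(S^{m}(V)) \to S^{d-1}(S^m(V)) \otimes (\text{top piece}) \to \cdots
\]
more cleanly via the filtration of $S^m(V) = S^m(V_2)$ built from the surjection $V_2 \twoheadrightarrow \kk$ and its kernel, which yields a short exact sequence $0 \to S^{m-1}(V_2) \otimes \det \to S^m(V_2) \to S^m(\kk)=\kk \to 0$ with $\det$ trivial for $SL_2$-type actions. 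Feeding this into the exact sequences for symmetric powers of an extension, and using that relative projectivity (here, genuine projectivity) is preserved under tensoring with anything and under extensions in a short exact sequence where the two ends are projective, I would reduce the projectivity of $S^d(S^m(V))$ to that of terms $S^{d_1}(S^{m-1}(V)) \otimes S^{d_2}(\ldots)$ with $d_1 + \text{(something)} $ still large enough to invoke the induction hypothesis. The main obstacle is bookkeeping: one must check that whenever $m+d \ge p$ the relevant sub-exponents in every term of the filtration still satisfy the inductive inequality $(m-1) + d' \ge p$ or land in an obviously-projective tensor product via the Clebsch–Gordan bound $a+b \ge p+1$.

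For part (ii), periodicity, I would again induct on $m$, the case $m=1$ being the statement $S^{pr+d}(V_2) \cong S^d(V_2) \oplus (\text{free})$ for $d < p$, which is immediate from the explicit decomposition of $S^\bullet(V_2)$ (the Hilbert series / character computation shows $S^{pr+d}(V_2)$ and $S^d(V_2)$ have the same non-projective part). For the inductive step I would use the same filtration of $S^{pr+d}(S^m(V))$ coming from $0 \to S^{m-1}(V_2)\otimes(\cdot) \to S^m(V_2) \to \kk \to 0$: the associated graded of the induced filtration on the $(pr+d)$th symmetric power is a sum of terms $S^{a}(S^{m-1}(V)) \otimes S^{b}(\kk)$ with $a + b\,(\text{weighted by } m) = pr+d$; by the inductive hypothesis each such term is, modulo projectives, periodic with period $p$ in $a$, and one checks the total exponent shifts by a multiple of $p$. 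Since (by part (i)) enough of these terms are outright projective once the exponents are large, the extension problem collapses modulo projectives — in the stable module category $\stmod \kk G$, short exact sequences with projective outer terms split, and tensoring is exact there. The hard part here is the same as in (i): controlling which graded pieces are projective and verifying the exponents in the remaining pieces differ by exactly a multiple of $p$ from those in the $d$-truncated version, so that an isomorphism assembled piece-by-piece in $\stmod\kk G$ lifts to the claimed $\kk G$-isomorphism up to free summands. I would close by remarking that both statements are in fact due to Almkvist and Fossum \cite{AlmkvistFossum}, so in the body of the paper this ``proof'' is really a recollection; the genuinely new work is the elementary-abelian generalisation established later in the paper.
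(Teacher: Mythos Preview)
The paper does not prove Theorem~\ref{thm:cyclicanalog}: it is stated as a result of Almkvist and Fossum with a citation to \cite{AlmkvistFossum}, and is then invoked without further argument as the base case $n=1$ in the proof of Proposition~\ref{special}. Your closing remark already anticipates this, and that remark is the accurate summary of the situation --- there is nothing in the paper to compare your sketch against.

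That said, a brief comment on the sketch itself. The overall strategy (Clebsch--Gordan for $\kk C_p$, induction on $m$ via the filtration coming from $0 \to S^{m-1}(V_2) \to S^m(V_2) \to \kk \to 0$, working in the stable module category for the periodicity) is a reasonable outline of how one could reprove Almkvist--Fossum's result, but several steps are left genuinely unresolved rather than merely abbreviated. Your own parenthetical ``wait, one must be careful about the boundary'' flags a real issue: the inequality $b-a+1 \ge p$ you would need for the na\"ive Clebsch--Gordan argument does not follow from $a+b \ge p+1$, so the tensor-product projectivity criterion has to be stated and used more carefully (the correct statement is that $V_a \otimes V_b$ is free precisely when $p \mid ab$, or one works with the folded indices modulo $2p$). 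Likewise, in the inductive step for (i) the claim that ``the relevant sub-exponents \ldots\ still satisfy the inductive inequality $(m-1)+d' \ge p$'' is exactly the heart of the matter and is not verified. If you want a self-contained proof rather than a citation, these gaps would need to be closed; as the paper stands, a citation suffices.
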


Of course, in determining the indecomposable summands of any modular representation of $C_p$, one is helped enormously by the fact that we have a classification of indecomposable representations. Indeed, the modules $V_{d+1}: = S^d(V)$ for $0 \leq d <p$, form a complete set of isomorphism classes of indecomposable modular representations for $C_p$, and $V_p$ is the unique projective indecomposable. Furthermore, each has a $C_p$-fixed subspace of dimension 1, and so the number of indecomposable summands in a given representation is equal to the dimension of the subspace fixed by $C_p$. For representations of elementary abelian $p$-groups, neither of these helpful results hold. In fact, if $G$ is an elementary abelian $p$-group of order $p^n$, then unless $n=1$ or $p=n=2$, the representation type of $G$ is ``wild''; essentially this means that there is no hope of classifying the indecomposable representations up to isomorphism. In spite of this, we prove the following:

\begin{Theorem}\label{thm:relprojinhighdegrees} Let $E$ be an elementary abelian $p$-group of order $q$, $\kk$ a field of characteristic $p$ and $V$ a faithful indecomposable $\kk E$-module with dimension 2. Let $m,d<q$ with $m+d \geq q$. Then $S^d(S^m(V)^*)$ is projective relative to the set of subgroups of $E$ with order $ \leq m$.
\end{Theorem}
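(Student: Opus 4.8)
The plan is to carry the argument out in the relative stable module category $\stmod(\kk E;\mathcal{X})$ with $\mathcal{X}=\mathcal{X}_m$ the family of subgroups of $E$ of order $\leq m$, so that ``projective relative to $\mathcal{X}_m$'' simply means ``isomorphic to $0$'' there. Since $\kk E$ is symmetric, relative projectivity and relative injectivity with respect to $\mathcal{X}_m$ coincide, so this category is triangulated with invertible relative syzygy $\Omega_{\mathcal{X}_m}$, and a short exact sequence of $\kk E$-modules induces a distinguished triangle exactly when it is $\mathcal{X}_m$-split (splits on restriction to each $H\in\mathcal{X}_m$).

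First I would pin down the modules. Via the faithful character $\chi\colon E\hookrightarrow\kk=\Ga(\kk)$ one realises $V$ as the restriction to $E$ of the standard two-dimensional rational $\Ga$-module, and dehomogenising the symmetric algebra one gets an identification of $S^m(V)$ with the $\kk E$-module $\kk[t]_{\leq m}$ of polynomials of degree $\leq m$, with $g\in E$ acting by the translation $t\mapsto t+\chi(g)$. For a subgroup $H\leq E$ of order $p^{\kappa}$, the norm polynomial $N_H=\prod_{\lambda\in\chi(H)}(t-\lambda)$ has degree $p^{\kappa}$ and is $H$-invariant; its powers decompose $\kk[t]$ into blocks $N_H^{j}\kk[t]_{\leq p^{\kappa}-1}$, each isomorphic to $\kk H$, exhibiting $\kk[t]$ as a free $\kk H$-module and giving
\[
S^m(V)\!\downarrow_H\;\cong\;(\kk H)^{a}\,\oplus\,S^{\,m\bmod p^{\kappa}}(V\!\downarrow_H)
\]
for a suitable $a\geq 1$. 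Since symmetric, exterior and tensor powers of free $\kk H$-modules are projective, it follows that
\[
S^d(S^m(V)^*)\!\downarrow_H\;\cong\;S^d\!\bigl(S^{\,m\bmod p^{\kappa}}(V\!\downarrow_H)^{*}\bigr)\,\oplus\,(\text{projective }\kk H\text{-module}),
\]
so after restriction to any subgroup of order $\leq m$ the module reduces to a symmetric power built from a representation with strictly smaller parameter.

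The induction is then driven by two exact sequences: the Koszul resolution
\[
0\to\Lambda^d U\to\Lambda^{d-1}U\otimes U\to\cdots\to U\otimes S^{d-1}U\to S^d U\to 0,\qquad U=S^m(V)^{*},
\]
valid for $d\geq 1$, which rewrites $S^dU$ in terms of exterior powers of $U$ and lower symmetric powers; and the short exact sequence $0\to S^{d-1}(S^m(V)^{*})\to S^d(S^m(V)^{*})\to S^d(S^{m-1}(V)^{*})\to 0$ coming from the trivial quotient $V\twoheadrightarrow\kk$ (the fixed vector $e_1$). One runs a descending double induction on $(m,d)$ — also inducting on the rank $n$ of $E$ — with base cases the boundary $d+m=q$ and the cyclic case $E=C_p$: there $\mathcal{X}_m=\{1\}$, $S^m(V)^{*}\cong S^m(V)$, and the assertion is precisely part (i) of Theorem \ref{thm:cyclicanalog}. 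Alongside the symmetric powers one must prove the corresponding statement for the exterior powers $\Lambda^{j}(S^m(V))$ in the relevant range — the $E$-analogue of Almkvist and Fossum's computation of $\Lambda^{\bullet}(V_{m+1})$ — by the same mechanism.

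The step I expect to be the main obstacle is propagating relative $\mathcal{X}_m$-projectivity along these exact sequences, since it is not preserved under arbitrary extensions: a short exact sequence only yields a triangle in $\stmod(\kk E;\mathcal{X}_m)$ when it is $\mathcal{X}_m$-split, and in general neither of the two sequences above has this property (over $\kk C_p$, in the critical degrees, they are not even split). The resolution must come from the restriction lemma: after restricting to a subgroup $H$ with $|H|\leq m$, the relevant modules acquire large free $\kk H$-summands, and one has to check that precisely when $m+d\geq q$ the surviving parts make the restricted sequences split over every such $H$ simultaneously. Matching the degree hypothesis against the exact point at which enough free summands appear, uniformly over all subgroups of order $\leq m$, is the combinatorial heart of the argument; it may be cleanest to package the induction around a stronger statement (tracking the $\kk H$-restrictions explicitly, or proved in tandem with the period-$q$ statement for $(S^d(V^{*}))_{d\geq0}$) so that $\mathcal{X}_m$-projectivity is only ever deduced for a module sitting at the end of a genuinely $\mathcal{X}_m$-split complex.
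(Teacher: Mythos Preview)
Your proposal identifies the right ambient category and the right exact sequence $0\to S^{d-1}U\to S^dU\to S^d(S^{m-1}(V)^*)\to 0$ (multiplication by the fixed point $x_m\in U=S^m(V)^*$; your description of its origin is slightly garbled but the sequence is correct). You also correctly isolate the obstacle: these sequences are not $\mathcal{X}_m$-split in general, so they do not yield triangles in $_{\mathcal{X}_m}\stmod_{\kk E}$, and you cannot propagate relative projectivity along them. Where the proposal stops is exactly where the work begins, and the mechanism you sketch for overcoming it (checking splitness on each $H\in\mathcal{X}_m$ ``by the restriction lemma'') does not go through as stated: the restricted sequences are typically \emph{not} split, and no amount of free summands changes that.

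The paper resolves this in three ways that your outline misses. First, it does not work with $S^d(S^m(V)^*)$ directly but with the submodule $B_{d,m}$ of polynomials of $x_0$-degree $<q$, so that the periodicity decomposition $S^d(S^m(V)^*)\cong B_{d,m}\oplus S^{d-q}(S^m(V)^*)$ is built in from the start; for $d<q$ one has $B_{d,m}=S^d(S^m(V)^*)$, so nothing is lost. Second, it does \emph{not} attempt to show the exact sequence is $\mathcal{X}$-split; instead it uses Lemma~\ref{lemma:cokernels}: two injective maps equivalent in $_{\mathcal{X}}\stmod$ have $\mathcal{X}$-equivalent cokernels. So the task becomes identifying the map $\alpha_{d,m}$ (not just its source and target) in the stable category, which is done via Corollary~\ref{cor:frob} by checking restrictions to maximal subgroups. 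Third---and this is the ``stronger statement'' you are groping for---the induction carries an explicit formula (Proposition~\ref{special}(ii)) expressing $B_{d,m}$ in $_{\mathcal{X}_{n-2}}\stmod_{\kk E}$ as a specific sum of induced modules $B_{d',m'}(E')\uparrow_{E'}^E$ over the index-$p$ subgroups $E'$, with multiplicities given by differences of binomial coefficients. This formula is what makes the induction close: it lets you compute $\alpha_{d,m}$ on restriction via Mackey, and it is strong enough that the subsequent descent from $\mathcal{X}_{n-1}$-projectivity to $\mathcal{X}_{k-1}$-projectivity (Corollary~\ref{main}) is a short separate backward induction on $k$. The Koszul complex and exterior powers play no role; you can drop that thread entirely.
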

Note that in case $E$ is cyclic of order $p$, $S^m(V)$ is self-dual, so this generalises the first part of Theorem \ref{thm:cyclicanalog}. We also prove the following generalisation of the second part:

\begin{Theorem}\label{thm:periodicmodulorelproj} Retain the notation of Theorem \ref{thm:relprojinhighdegrees}. Let $m,d$ be integers with  $m<q$, $d>q$ and let $d' \equiv d \mod q$ with $d'<q$. Then $S^d(S^m(V)^*) \cong S^{d'}(S^m(V)^*)$ modulo summands which are projective relative to the set of subgroups of $E$ with order $ \leq m$.
\end{Theorem}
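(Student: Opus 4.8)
The plan is to reduce the periodicity statement to Theorem \ref{thm:relprojinhighdegrees} by an induction on $d$ carried out in the relative stable module category $\stmod(\kk E, \mathcal{X})$, where $\mathcal{X}$ denotes the set of subgroups of $E$ of order $\leq m$. Writing $V_m := S^m(W)$ (so $V = W$ in the notation of the theorem, and we are studying $S^d(V_m^*)$), the key mechanism is the short exact sequence relating consecutive symmetric powers. First I would recall the standard exact sequence of $\kk E$-modules
\[
0 \longrightarrow S^{d-q}(V_m^*) \otimes \Lambda^q((V_m^*)') \longrightarrow \cdots \longrightarrow S^{d}(V_m^*) \longrightarrow \text{(truncation)} \longrightarrow 0,
\]
but in fact the cleaner route is to use the Koszul-type identity together with the fact that the relevant exterior and divided powers of a $2$-dimensional module are built out of $\Lambda^2(W) \cong \kk$ (the determinant, which is trivial since $E$ has no nontrivial characters to $\kk^\times$ in characteristic $p$). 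This is where the hypothesis $\dim W = 2$ is doing essential work.

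Concretely, here is the inductive step I would set up. Fix $m < q$ and suppose $d > q$. I want to show $S^d(V_m^*) \cong S^{d-q}(V_m^*)$ in $\stmod(\kk E,\mathcal{X})$, i.e. modulo summands projective relative to $\mathcal{X}$; iterating this down to the residue $d'$ finishes the proof. The engine is the short exact sequence
\[
0 \longrightarrow S^{q-1}(V_m^*) \otimes S^{d-q+1}(V_m^*) \longrightarrow S^{d}(V_m^*) \oplus (\text{lower terms}) \longrightarrow \cdots,
\]
but more honestly I expect the right tool is a two-step resolution: there is an exact sequence
\[
0 \to S^{d-q}(V_m^*) \to S^{q-1}(W^*)\otimes S^{d-q+1}(V_m^*) \to \cdots \to S^d(V_m^*) \to 0
\]
coming from the Eagon–Northcott / Koszul complex attached to the surjection $S^q(W^*) \twoheadrightarrow (\text{the line spanned by a }q\text{-th power})$, where one uses that $W^*$ is $2$-dimensional so these complexes are short. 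The middle terms of this complex are tensor products $S^i(W^*) \otimes S^{j}(V_m^*)$ with $i \geq 1$ and $i + \text{(something)} = d$; crucially each such middle term, by the hypothesis $m + (\text{its relevant }d\text{-degree}) \geq q$, falls under Theorem \ref{thm:relprojinhighdegrees} and so is projective relative to $\mathcal{X}$ — or, failing that, one tensors with $V_m^*$ and uses that $\mathcal{X}$-projectivity is preserved under tensoring with any module. Dévissage through this complex in $\stmod(\kk E,\mathcal{X})$ then identifies $S^d(V_m^*)$ with $S^{d-q}(V_m^*)$ up to a shift by $\Omega$-powers, and since the complex has length bounded independently of $d$ (again because $\dim W = 2$), the syzygy shifts cancel.

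The main obstacle, and the step I would spend the most care on, is producing the correct exact sequence of $\kk E$-modules linking $S^d(V_m^*)$, $S^{d-q}(V_m^*)$, and terms that are $\mathcal{X}$-projective — and checking that every middle term really does satisfy the degree inequality $m + d'' \geq q$ needed to invoke Theorem \ref{thm:relprojinhighdegrees}, or is otherwise manifestly $\mathcal{X}$-projective (for instance a tensor product one of whose factors is $\mathcal{X}$-projective). A subtlety is that the naive ``raise the $d$-degree by $q$'' sequence may increase the symmetric-power degree of the $W^*$-factor past $q-1$, taking us outside the range where $W$-symmetric powers are understood; I would handle this by working with $S^q(W^*)$, noting its socle/head structure (it is, modulo an $\mathcal{X}$-projective or even genuinely projective summand, a copy of $W^*$ up to twist), and feeding that back in. Once the exact sequence is in hand, the passage to $\stmod(\kk E,\mathcal{X})$ and the cancellation of $\Omega$-shifts is formal, using only the triangulated structure of the relative stable category established in Section \ref{sec:relstabmodcat} and the elementary fact that $\Omega$ is invertible there.
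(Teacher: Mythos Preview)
Your instinct to deduce periodicity from relative projectivity is sound, but the mechanism you propose --- a Koszul or Eagon--Northcott complex --- is not what the paper uses, and as stated it has a real gap. The paper's route is far more direct. In $S^*(S^m(V)^*)$ there is an explicit $E$-invariant of degree $q$, the orbit product $N_E(x_0) = \prod_{\alpha \in E} \alpha \cdot x_0$ of the terminal variable; its lead monomial is $x_0^q$, so ordinary polynomial long division yields a genuine \emph{direct-sum} decomposition of $\kk E$-modules $S^d(S^m(V)^*) \cong S^{d-q}(S^m(V)^*) \oplus B_{d,m}$ (this is equation~(\ref{decomp})), where $B_{d,m}$ is the submodule of polynomials of $x_0$-degree $<q$. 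Iterating down to $d'$, the complement of $S^{d'}(S^m(V)^*)$ is a direct sum of pieces $B_{e,m}$ with $e \geq q$, hence $e+m\geq q$. Corollary~\ref{main} then shows each such $B_{e,m}$ is projective relative to $\mathcal{X}_{k-1}$ for $k$ with $m<p^k$. There are no exact sequences to splice, no $\Omega$-shifts to cancel, and no triangulated d\'evissage; and note that Theorems~\ref{thm:relprojinhighdegrees} and~\ref{thm:periodicmodulorelproj} are both obtained as special cases of Corollary~\ref{main} rather than one being deduced from the other.

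The complexes you sketch do not deliver what you need. The Koszul complex on $S^m(V)^*$ has length $m+1$ and step size $1$ in the symmetric-power degree: it links $S^d$ to $S^{d-1},\ldots,S^{d-m-1}$, not to $S^{d-q}$. Your alternative, a complex attached to a surjection $S^q(V^*) \twoheadrightarrow \kk$, lives in symmetric powers of the $2$-dimensional $V^*$, not of the $(m{+}1)$-dimensional $S^m(V)^*$; there is no evident functorial way to convert a resolution built from $S^i(V^*)$ into one whose terms are $S^j(S^m(V)^*)$. Even granting some such complex, applying Theorem~\ref{thm:relprojinhighdegrees} to the middle terms would require each of them to involve $S^j(S^m(V)^*)$ with $j$ in the narrow window $q-m \leq j < q$, and nothing in a Koszul-type construction forces that. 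The hypothesis $\dim V = 2$ is indeed used in the paper, but not to shorten a Koszul complex: it makes $S^m(V)^*$ uniserial with a single fixed line $\langle x_m\rangle$, which drives the exact sequence~(\ref{exactsequence}) and the downward induction on $m$ in Proposition~\ref{special}, and it supplies the degree-$q$ invariant $N_E(x_0)$ that gives the splitting above.
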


\subsection{Structure of the paper}
In order to prove results like the two theorems above, we need to determine the isomorphism type of $\kk G$-modules up to the addition of other $\kk G$-modules which are projective relative to certain families of subgroups. Given any set $\mathcal{X}$ of subgroups of $G$, one can define a category whose objects can be viewed as residue classes of $\kk G$-modules up to the addition of relatively $\mathcal{X}$-projective modules. This is called the $\mathcal{X}$-relative stable module category. In case $\mathcal{X} = \{1\}$ it reduces to the stable module category, which has been written about extensively by many authors. We recommend \cite{CarlsonBook} as a good reference. Many familiar results about the stable module category generalise in a straightforward manner. In section 2 we define the $\mathcal{X}$-relative stable module category and gather together the results we need, in most cases omitting proofs. The goal of the section is to prove a result (Corollary \ref{cor:frob}) which says something about the relationship between stable module categories relative to different families of subgroups. 

The main body of work in this paper is section 3, in which we prove Theorems \ref{thm:relprojinhighdegrees} and \ref{thm:periodicmodulorelproj}. In section 4 we turn our focus to invariant theory, in particular proving Theorems \ref{thm:2q-3} and \ref{thm:quotientofkvg}.

\begin{ack} Special thanks go to Professor David Benson a number of invaluable conversations at the genesis of this work. Thanks also to Dr. M\"ufit Sezer for his assistance with the proof of Proposition \ref{prop:coinvbound}.
\end{ack}

\section{Relative projectivity and the relative stable module category}\label{sec:relstabmodcat}

In this section, we fix a prime $p>0$ and let $G$ be a finite group of order divisible by $p$. Let $\kk$ be a field of characteristic $p$ and let $\mathcal{X}$ be a set of subgroups of $G$.
Now let $M$ be a finitely generated $\kk G$-module. $M$ is said to be \emph{projective relative to $\mathcal{X}$} if the following holds: let $\phi:M \rightarrow Y$ be a $\kk G$-homomorphism and $j: X \rightarrow Y$ a surjective $\kk G$-homomorphism which splits on restriction to any subgroup of $H \in \mathcal{X}$. Then there exists a $\kk G$-homomorphism $\psi$ making the following diagram commute.

\begin{center}
\begin{tikzpicture}
\draw[->] (0,0) node[left] {$X$} -- (2,0) node[right] {$Y$}; 
\draw[->] (2.5,0) -- (4.5,0) node[right] {0};
\draw[->] (2.2,2) node[above] {$M$} -- (2.2,0.3);
\draw[dashed,->] (2,2) -- (0,0.3);
\draw (0.8,1.2) node[above]{$\psi$};
\draw (1,0) node[above] {$j$};
\draw (2.2,1) node[right] {$\phi$};
\end{tikzpicture}
\end{center}  

Dually, one says that $M$ is \emph{injective relative to $\mathcal{X}$} if the following holds: given an injective $\kk G$-homomorphism $i: X \rightarrow Y$ which splits on restriction to each $H \in \mathcal{X}$ and a $\kk G$-homomorphism $\phi:  X \rightarrow M$, there exists a $\kk G$-homomorphism $\psi$ making the following diagram commute.

\begin{center}
\begin{tikzpicture}
\draw[->] (0,0) node[left] {$X$} -- (2,0) node[right] {$Y$}; 
\draw[<-] (-0.5,0) -- (-2.5,0) node[left] {0};
\draw[->] (-0.2,-0.2) -- (-0.2,-2)node[below] {$M$};
\draw[dashed,->] (2,-0.2) -- (0.2,-2);
\draw (1.2,-1.2) node[below]{$\psi$};
\draw (1,0) node[above] {$i$};
\draw (-0.2,-1) node[left] {$\phi$};
\end{tikzpicture}
\end{center}  

These notions are equivalent to the usual definitions of projective and injective $\kk G$-modules when we take $\mathcal{X} = \{1\}$. We will say a $\kk G$-homomorphism is $\mathcal{X}$-split if it splits on restriction to each $H \in \mathcal{X}$.  Since a $\kk G$-module is projective relative to $H$ if and only if it is also projective relative to the set of all subgroups of $H$, we usually assume $\mathcal{X}$ is closed under taking subgroups.

Relative projectivity is associated very closely with the transfer maps defined in subsection \ref{subsec:transfer}. Given $\kk G$-modules and $M$ and $N$, there is a natural left action of $G$ on $\Hom_{\kk}(M,N)$ defined by $$(\sigma \cdot \alpha) v = \sigma\alpha (\sigma^{-1}v), \qquad \sigma \in G , \alpha \in \Hom_{\kk}(M,N), v \in M.$$
 We write $(M,N)$ for $\Hom_{\kk}(M,N)$, so that $(M,N)^G = \Hom_{\kk G}(M,N)$. If $H$ is a subgroup of $G$, then the map $\Tr^G_H: (M,N)^H \rightarrow (M,N)^G$ will be defined as
\[\Tr_H^G(\alpha) (v) = \sum_{\sigma \in S} \sigma \alpha (\sigma^{-1} v).\]
There is also a map $\res^G_H: (M,N)^G \rightarrow (M,N)^H$ obtained by restricting homomorphisms. We note the following properties of transfer:

\begin{Lemma}\label{transferprops}
\begin{enumerate}
\item  Let $\alpha \in (M,N)^H$ and $\beta \in (M,M)^G$. Then $\Tr_H^G(\alpha) \circ \beta = \Tr(\alpha \circ \res^G_H(\beta))$.
\item  Let $\alpha \in (N,N)^G$ and $\beta \in (M,N)^H$. Then $\alpha \circ \Tr^G_H(\beta) = \Tr(\res^G_H(\alpha) \circ \beta)$.
\end{enumerate}
\end{Lemma}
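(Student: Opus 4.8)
The plan is to verify both identities by unwinding the definitions on a test vector; there is no conceptual difficulty, as these are the familiar projection (Frobenius) formulae for the transfer, and the right-hand sides should of course be read as $\Tr_H^G$ applied to the indicated composites. Fix a left transversal $S$ of $H$ in $G$, so that $\Tr_H^G(\gamma)(v) = \sum_{\sigma \in S} \sigma\, \gamma(\sigma^{-1}v)$ for any $\gamma \in (M,N)^H$ and any $v \in M$. Recall also that $\res_H^G$ leaves the underlying $\kk$-linear map unchanged, merely reinterpreting a $G$-equivariant map as an $H$-equivariant one; thus $\alpha \circ \res_H^G(\beta)$ and $\res_H^G(\alpha) \circ \beta$ denote the honest composites of $\kk$-linear maps, viewed as elements of $(M,N)^H$.

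For part (1), expanding gives $\bigl(\Tr_H^G(\alpha) \circ \beta\bigr)(v) = \sum_{\sigma \in S} \sigma\, \alpha\bigl(\sigma^{-1}\beta(v)\bigr)$. Since $\beta \in (M,M)^G = \Hom_{\kk G}(M,M)$ commutes with the action of $G$, I would rewrite $\sigma^{-1}\beta(v) = \beta(\sigma^{-1}v)$, obtaining $\sum_{\sigma \in S} \sigma\, \alpha\bigl(\beta(\sigma^{-1}v)\bigr)$. This is exactly $\Tr_H^G\bigl(\alpha \circ \res_H^G(\beta)\bigr)(v)$, since the latter equals $\sum_{\sigma \in S} \sigma\, (\alpha \circ \beta)(\sigma^{-1}v)$. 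The only input used is the $G$-equivariance of $\beta$, which is precisely why the hypothesis places $\beta$ in $(M,M)^G$ rather than $(M,M)^H$.

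Part (2) is dual. We have $\bigl(\alpha \circ \Tr_H^G(\beta)\bigr)(v) = \alpha\Bigl(\sum_{\sigma \in S} \sigma\, \beta(\sigma^{-1}v)\Bigr) = \sum_{\sigma \in S} \alpha\bigl(\sigma\, \beta(\sigma^{-1}v)\bigr)$, and now $\alpha \in (N,N)^G$ commutes with $G$, so $\alpha(\sigma\, w) = \sigma\, \alpha(w)$ yields $\sum_{\sigma \in S} \sigma\, \alpha\bigl(\beta(\sigma^{-1}v)\bigr)$. Since $\Tr_H^G\bigl(\res_H^G(\alpha) \circ \beta\bigr)(v) = \sum_{\sigma \in S} \sigma\, (\alpha \circ \beta)(\sigma^{-1}v)$, the two sides agree.

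In both parts the entire content is careful bookkeeping of which maps are $G$-equivariant and which are only $H$-equivariant; I do not anticipate any real obstacle. The one point worth flagging is that, as with the original definition of the transfer, the displayed expressions are independent of the choice of transversal $S$, so the formulae are well posed — this was already observed in Subsection~\ref{subsec:transfer}.
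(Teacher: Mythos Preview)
Your argument is correct: both identities follow immediately from expanding the definition of $\Tr_H^G$ and using the $G$-equivariance of the map labeled $\beta$ (in part (1)) or $\alpha$ (in part (2)) to commute it past $\sigma^{\pm 1}$. The paper does not give a proof at all but simply cites \cite[Lemma~3.6.3(i),(ii)]{Benson1}; what you have written is exactly the standard verification one finds there, so there is no substantive difference in approach.
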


\begin{proof} See \cite[Lemma 3.6.3(i), (ii)]{Benson1}.
\end{proof}

There are many equivalent ways to characteristic relative projectivity:
\begin{Prop}\label{higman}
Let $G$ be a finite group of order divisible by $p$, $\mathcal{X}$ a set of subgroups of $G$ and $M$ a $\kk G$-module. Then the following are equivalent:
\begin{enumerate}
\item[(i)] $M$ is projective relative to $\mathcal{X}$;
\item[(ii)] Every $\mathcal{X}$-split epimorphism of $\kk G$-modules $\phi: N \rightarrow M$ splits;
\item[(iii)] $M$ is injective relative to $\mathcal{X}$;
\item[(iv)] Every $\mathcal{X}$-split monomorphism of $\kk G$-modules $\phi: M \rightarrow N$ splits;
\item[(v)] $M$ is a direct summand of $\oplus_{H \in \mathcal{X}} M \downarrow_H \uparrow^G$;
\item[(vi)] $M$ is a direct summand of a direct sum of modules induced from subgroups in $\mathcal{X}$
\item[(vii)] There exists a set of homomorphisms $\{\beta_{H}: H \in \mathcal{X}\}$  such that $\beta_H \in (M,M)^H$ and $\sum_{H \in \mathcal{X}} \Tr_H^G(\beta_H) = \Id_M$.
\end{enumerate}
\end{Prop}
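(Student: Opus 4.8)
The plan is to establish Proposition~\ref{higman} as a connected web of implications built around condition~(vii)---the purely formal statement that $\Id_M$ is a finite sum of relative transfers $\Tr^G_H(\beta_H)$ with $H\in\mathcal{X}$ and $\beta_H\in(M,M)^H$---from which every lifting and splitting property flows by an averaging argument powered by the projection formulae of Lemma~\ref{transferprops}. Concretely I would prove the chain (i)$\Rightarrow$(ii)$\Rightarrow$(v)$\Rightarrow$(vi)$\Rightarrow$(vii)$\Rightarrow$(i) together with (vii)$\Rightarrow$(iii)$\Rightarrow$(iv)$\Rightarrow$(v), which connects all seven conditions.

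First the formal implications. For (i)$\Rightarrow$(ii), given an $\mathcal{X}$-split epimorphism $\phi\colon N\to M$ I feed the defining diagram of relative projectivity with $X=N$, $Y=M$, $j=\phi$ and vertical map $\id_M\colon M\to Y$; the resulting $\psi\colon M\to N$ satisfies $\phi\psi=\id_M$. Dually (iii)$\Rightarrow$(iv) uses the defining diagram for relative injectivity with the identity on $M$, and (v)$\Rightarrow$(vi) is trivial. For (ii)$\Rightarrow$(v) I use the counit $\varepsilon\colon\bigoplus_{H\in\mathcal{X}}M\downarrow_H\uparrow^G\to M$, which on each summand sends $g\otimes m$ to $gm$; restricting to any $K\in\mathcal{X}$, the $\kk K$-linear map $m\mapsto e\otimes m$ into the $K$-th summand is a section, so $\varepsilon$ is $\mathcal{X}$-split and surjective, whence (ii) makes it $\kk G$-split and exhibits $M$ as a summand of $\bigoplus_{H\in\mathcal{X}}M\downarrow_H\uparrow^G$. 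For (vi)$\Rightarrow$(vii) I use that the identity of an induced module $N\uparrow^G_H$ equals $\Tr^G_H(\pi)$, where $\pi$ is the $\kk H$-projection of $(N\uparrow^G_H)\downarrow_H$ onto its summand $e\otimes N$ (a short transversal computation); hence $\Id_P=\sum_{H\in\mathcal{X}}\Tr^G_H(\gamma_H)$ with $\gamma_H\in(P,P)^H$ for any finite direct sum $P$ of modules induced from subgroups in $\mathcal{X}$, and if $\iota\colon M\to P$, $\rho\colon P\to M$ with $\rho\iota=\id_M$, the projection formulae rewrite $\rho\circ\Tr^G_H(\gamma_H)\circ\iota$ as $\Tr^G_H\bigl(\res^G_H(\rho)\circ\gamma_H\circ\res^G_H(\iota)\bigr)$; taking $\beta_H$ to be the inner map yields (vii).

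The heart is the averaging step (vii)$\Rightarrow$(i) (and, identically, (vii)$\Rightarrow$(iii)): given an $\mathcal{X}$-split epimorphism $j\colon X\to Y$ and $\phi\colon M\to Y$, I choose $\kk H$-sections $s_H$ of $j\downarrow_H$ for $H\in\mathcal{X}$ and put $\psi=\sum_{H\in\mathcal{X}}\Tr^G_H\bigl(s_H\circ\res^G_H(\phi)\circ\beta_H\bigr)$, a $\kk G$-homomorphism $M\to X$; pulling $j$ inside the transfer via Lemma~\ref{transferprops} and using $\res^G_H(j)\circ s_H=\id$, then pulling $\phi$ back out, gives $j\circ\psi=\sum_H\Tr^G_H(\res^G_H(\phi)\circ\beta_H)=\phi\circ\sum_H\Tr^G_H(\beta_H)=\phi$, the required lift; the proof of (vii)$\Rightarrow$(iii) is the mirror image, composing on the other side. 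To close the web I prove (iv)$\Rightarrow$(v): since induction and coinduction from a finite-index subgroup agree, there is a natural $\kk G$-linear unit $\eta_H\colon M\to M\downarrow_H\uparrow^G$ that is injective and split on restriction to $H$, so $\eta=\bigoplus_{H\in\mathcal{X}}\eta_H$ is an $\mathcal{X}$-split monomorphism of $M$ into the relatively $\mathcal{X}$-projective module $\bigoplus_{H\in\mathcal{X}}M\downarrow_H\uparrow^G$ (retract $\eta\downarrow_K$ using only the $K$-th component), and (iv) makes $M$ a summand, which is (v).

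The main obstacle is not any single deep point but the bookkeeping that separates ``$\mathcal{X}$-split'' from ``split over one $H\in\mathcal{X}$'': the genuine content is to check that the counit $\varepsilon$ and the unit $\eta$ are split on restriction to \emph{every} member of $\mathcal{X}$, which I arrange by constructing both as direct sums indexed by $\mathcal{X}$ so that a single summand handles the restriction to each $K$, together with the induction--coinduction identification needed to build $\eta$ over $\kk G$ rather than merely over $\kk H$. Everything else is routine manipulation of the two projection formulae in Lemma~\ref{transferprops} and of the relation $\sum_{H\in\mathcal{X}}\Tr^G_H(\beta_H)=\Id_M$.
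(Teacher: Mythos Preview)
Your proposal is correct and is precisely the argument the paper gestures at: the paper's own proof consists only of a citation to \cite[Proposition~3.6.4]{Benson1} for the case where $\mathcal{X}$ is a single subgroup, followed by the remark that ``this can easily be generalised''. Your web of implications centred on Higman's criterion~(vii), driven by the projection formulae of Lemma~\ref{transferprops} and the counit/unit maps for $\bigoplus_{H\in\mathcal{X}}M\downarrow_H\uparrow^G$, is exactly that generalisation carried out in detail, so there is nothing to correct or to contrast.
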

 
The last of these is called \emph{Higman's criterion}.
\begin{proof} The proof when $\mathcal{X}$ consists of a single subgroup of $G$ can be found in \cite[Proposition~3.6.4]{Benson1}. This can easily be generalised. 
\end{proof}

Note that (vi) tells us that $M$ is projective relative to $\mathcal{X}$ if and only if $M$ decomposes as a direct sum of modules, each of which is projective to some single $H \in \mathcal{X}$.
The following corollary now follows immediately from \cite[Corollary~3.6.7]{Benson1}.
\begin{Corollary}\label{tensorrelproj} Suppose $M$ and $N$ are $\kk G$-modules and $N$ is projective relative to $\mathcal{X}$. Then $M \otimes N$ is projective relative to $\mathcal{X}$.
\end{Corollary}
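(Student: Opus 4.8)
The plan is to deduce this from Higman's criterion, Proposition~\ref{higman}(vii), used in both directions. Since $N$ is projective relative to $\mathcal{X}$, choose homomorphisms $\beta_H \in (N,N)^H$, one for each $H \in \mathcal{X}$, with $\sum_{H \in \mathcal{X}} \Tr^G_H(\beta_H) = \Id_N$. For each $H$, form the endomorphism $\id_M \otimes \beta_H$ of $M \otimes N$ (with its diagonal $G$-action). Because the action is diagonal and $\beta_H$ is $H$-equivariant, $\id_M \otimes \beta_H$ commutes with the action of $H$, so $\id_M \otimes \beta_H \in (M \otimes N, M \otimes N)^H$; these will be the homomorphisms witnessing relative projectivity of $M \otimes N$.

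The one computation is to check that the transfer commutes with tensoring by $\id_M$, namely $\Tr^G_H(\id_M \otimes \beta_H) = \id_M \otimes \Tr^G_H(\beta_H)$. This follows by evaluating on a simple tensor: with $S$ a left transversal of $H$ in $G$,
\[
\Tr^G_H(\id_M \otimes \beta_H)(m \otimes n) = \sum_{\sigma \in S} \sigma\bigl(\sigma^{-1}m \otimes \beta_H(\sigma^{-1}n)\bigr) = \sum_{\sigma \in S} m \otimes \sigma\beta_H(\sigma^{-1}n) = m \otimes \Tr^G_H(\beta_H)(n),
\]
where we used $\sigma \sigma^{-1} = 1$ on the first tensor factor. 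Summing over $H \in \mathcal{X}$ and using linearity of $\otimes$ in the second variable gives $\sum_{H \in \mathcal{X}} \Tr^G_H(\id_M \otimes \beta_H) = \id_M \otimes \sum_{H \in \mathcal{X}} \Tr^G_H(\beta_H) = \id_M \otimes \Id_N = \Id_{M \otimes N}$, so Proposition~\ref{higman}(vii) applies and $M \otimes N$ is projective relative to $\mathcal{X}$.

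There is no real obstacle here. A slicker but equivalent argument uses Proposition~\ref{higman}(vi) instead: write $N$ as a direct summand of $\bigoplus_i L_i\uparrow^G_{H_i}$ with each $H_i \in \mathcal{X}$, and invoke the tensor identity $M \otimes (L\uparrow^G_H) \cong (M\downarrow_H \otimes L)\uparrow^G_H$ to see that $M \otimes N$ is a summand of a direct sum of modules induced from subgroups in $\mathcal{X}$. The only point requiring a moment's care in either approach is the displayed interchange of $\Tr^G_H$ with $\id_M \otimes (-)$, equivalently the $G$-equivariance of the tensor identity; this is exactly \cite[Corollary~3.6.7]{Benson1} for a single subgroup, and the general case follows by summing over $\mathcal{X}$ as above.
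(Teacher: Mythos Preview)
Your argument via Higman's criterion is correct: the key computation that $\Tr^G_H(\id_M \otimes \beta_H) = \id_M \otimes \Tr^G_H(\beta_H)$ is exactly right, and summing over $\mathcal{X}$ gives the identity on $M \otimes N$. The paper's own proof simply reduces to a single subgroup via Proposition~\ref{higman}(vi) and then cites \cite[Corollary~3.6.7]{Benson1}, which is precisely the alternative you sketch in your final paragraph; so your approach is a spelled-out version of the same idea.
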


Let $M$ and $N$ be $\kk G$-modules and let $\mathcal{X}$ be a set of subgroups of $G$.  Let $(M,N)^{G, \mathcal{X}}$ denote the linear subspace of $(M,N)^G$ consisting of homomorphisms which factor through some $\kk G$-module which is projective relative to $\mathcal{X}$. We consider the quotient
$$(M,N)^G_{\mathcal{X}} = (M,N)^G/(M,N)^{G,\mathcal{X}}.$$
One can define a category in which the objects are the $\kk G$-modules and $(M,N)^G_{\mathcal{X}}$ is the set of morphisms between $\kk G$-modules $M$ and $N$. This is called the $\mathcal{X}$-relative stable module category, or $_{\mathcal{X}}\stmod_{\kk G}$ for short. It reduces to the usual stable module category when we take $\mathcal{X} = \{1\}$.  

The question of whether a homomorphism factors through a relatively projective module is also related to the transfer.
\begin{Lemma}\label{factorsthrough} Let $M$, $N$ be $\kk G$-modules, $\mathcal{X}$ a collection of subgroups of $G$, and $\alpha \in (M,N)^G$. Then the following are equivalent:

\begin{enumerate}
\item $\alpha$ factors through $\oplus_{H \in \mathcal{X}} M \downarrow_H \uparrow^G$.
\item $\alpha$ factors through some module which is projective relative to $\mathcal{X}$.
\item There exist homomorphisms $\{\beta_H \in (M,N)^H: H \in \mathcal{X}\}$ such that $\alpha = \sum_{H \in \mathcal{X}} \Tr^G_H(\beta_H)$. 
\end{enumerate}
 \end{Lemma}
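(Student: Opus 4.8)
The plan is to run the cycle of implications $(1)\Rightarrow(2)\Rightarrow(3)\Rightarrow(1)$, each of which reduces to a short manipulation with the transfer; no step presents a genuine obstacle, only some bookkeeping over the family $\mathcal{X}$. For $(1)\Rightarrow(2)$ it suffices to note that $Q:=\bigoplus_{H\in\mathcal{X}}M\!\downarrow_H\!\uparrow^G$ is itself projective relative to $\mathcal{X}$, being a direct sum of modules induced from subgroups in $\mathcal{X}$ (Proposition \ref{higman}); so if $\alpha$ factors through $Q$ it factors through a relatively $\mathcal{X}$-projective module.

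For $(2)\Rightarrow(3)$, write $\alpha=\gamma\circ\delta$ with $\delta\in(M,P)^G$, $\gamma\in(P,N)^G$ and $P$ projective relative to $\mathcal{X}$. By Higman's criterion, Proposition \ref{higman}(vii), there are $\beta_H\in(P,P)^H$ for $H\in\mathcal{X}$ with $\sum_{H}\Tr^G_H(\beta_H)=\Id_P$. Inserting this into $\alpha=\gamma\circ\Id_P\circ\delta$ gives $\alpha=\sum_{H}\gamma\circ\Tr^G_H(\beta_H)\circ\delta$; pulling the $G$-maps $\gamma$ and $\delta$ inside the transfer — the projection formulas of Lemma \ref{transferprops}, which are valid with identical proofs when the modules involved are not all equal — turns each term into $\Tr^G_H(\gamma\circ\beta_H\circ\delta)$, and $\gamma\circ\beta_H\circ\delta$ is an $H$-homomorphism $M\to N$. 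Thus (3) holds with $\beta'_H:=\gamma\circ\beta_H\circ\delta$.

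For $(3)\Rightarrow(1)$, suppose $\alpha=\sum_{H\in\mathcal{X}}\Tr^G_H(\beta_H)$ with $\beta_H\in(M,N)^H$. It is enough to factor each $\Tr^G_H(\beta_H)$ through $M\!\downarrow_H\!\uparrow^G$: assembling the resulting maps into $\Phi\colon M\to Q$ (the diagonal into the summands) and $\Psi\colon Q\to N$ (the sum over the summands) gives $\alpha=\Psi\circ\Phi$. For a fixed $H$ choose a left transversal $S$ of $H$ in $G$ and set $\phi\colon M\to M\!\downarrow_H\!\uparrow^G=\kk G\otimes_{\kk H}M$, $\phi(v)=\sum_{\sigma\in S}\sigma\otimes\sigma^{-1}v$, and $\psi\colon M\!\downarrow_H\!\uparrow^G\to N$, $\psi(g\otimes v)=g\,\beta_H(v)$. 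A direct verification shows that $\psi$ is a well-defined $\kk G$-homomorphism (using $H$-equivariance of $\beta_H$), that $\phi$ is $\kk G$-linear and independent of the choice of $S$ (using the defining relations of $\otimes_{\kk H}$ and that $gS$ is again a transversal), and that $\psi\circ\phi=\Tr^G_H(\beta_H)$ straight from the definition of the transfer.

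The only places needing care — and none is a real obstacle — are checking the $G$-equivariance and transversal-independence of the coevaluation map $\phi$ in the last step, and observing that the transfer projection formulas continue to hold for maps between distinct modules; both are entirely routine. Indeed the single-subgroup version of the lemma is standard (cf. \cite{Benson1}), and the content added here is just the reduction to individual $H\in\mathcal{X}$ via direct sums.
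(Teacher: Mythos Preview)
Your proof is correct and follows the standard cycle of implications; it is precisely the argument underlying the paper's own proof, which merely cites \cite[Proposition~3.6.6]{Benson1} for the single-subgroup case and asserts that the extension to a family $\mathcal{X}$ is easy. You have supplied the details the paper omits, and your closing remark that the only new content is the reduction to individual $H\in\mathcal{X}$ via direct sums is exactly right.
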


\begin{proof} This is easily deduced from \cite[Proposition 3.6.6]{Benson1}.
\end{proof}

If $\alpha, \beta \in (M,N)^G$, we will write $\alpha \equiv_{\mathcal{X}} \beta$ whenever $\alpha$ and $\beta$ are equivalent as morphisms in $_{\mathcal{X}}\stmod_{\kk G}$. In other words, whenever $\alpha - \beta \in \oplus_{X \in \mathcal{X}} \Tr_X^G(M,N)^X$. A homomorphism $\alpha \in (M,N)^G$ induces an isomorphism in $_{\mathcal{X}}\stmod_{\kk G}$ if and only if there exists a homomorphism $\beta \in (N,M)^G$ with the property that $\alpha \circ \beta \equiv_{\mathcal{X}} \id_N$ and $\beta \circ \alpha \equiv_{\mathcal{X}} \id_M$. We shall write  $M \simeq_{\mathcal{X}} N$ to say that $M$ and $N$ are isomorphic as objects in $_{\mathcal{X}}\stmod_{\kk G}$. The following is now easy to deduce:

\begin{Lemma} Let $M$ and $N$ be $\kk G$-modules. Then the following are equivalent:
\begin{enumerate}
\item $M \simeq_{\mathcal{X}} N$;
\item There exist $\kk G$-modules $P$ and $Q$ which are projective relative to $\mathcal{X}$ such that $M \oplus P \cong N \oplus Q$.
\end{enumerate}
\end{Lemma}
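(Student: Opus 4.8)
The plan is to prove the two implications separately: $(2)\Rightarrow(1)$ is essentially formal, whereas $(1)\Rightarrow(2)$ requires a Schanuel-type splitting argument combined with the Krull--Schmidt theorem. For $(2)\Rightarrow(1)$ I would first record the elementary observation that any module $P$ projective relative to $\mathcal{X}$ is a zero object of $_{\mathcal{X}}\stmod_{\kk G}$: the canonical maps $M\to M\oplus P$ and $M\oplus P\to M$ are mutually inverse in $_{\mathcal{X}}\stmod_{\kk G}$, since $\id_{M\oplus P}$ differs from their composite by the projection of $M\oplus P$ onto the summand $P$, and that projection factors through $P$, hence lies in $(M\oplus P,M\oplus P)^{G,\mathcal{X}}$. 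Thus $M\simeq_{\mathcal{X}}M\oplus P$ and $N\simeq_{\mathcal{X}}N\oplus Q$; since a $\kk G$-isomorphism $M\oplus P\cong N\oplus Q$ is in particular an isomorphism in $_{\mathcal{X}}\stmod_{\kk G}$, composing the three gives $M\simeq_{\mathcal{X}}N$.

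For $(1)\Rightarrow(2)$ I would unpack $M\simeq_{\mathcal{X}}N$, using the description of isomorphisms in $_{\mathcal{X}}\stmod_{\kk G}$ recalled just before the lemma, into the existence of $\alpha\in(M,N)^G$ and $\beta\in(N,M)^G$ with $\beta\alpha\equiv_{\mathcal{X}}\id_M$ and $\alpha\beta\equiv_{\mathcal{X}}\id_N$. By the definition of $\equiv_{\mathcal{X}}$ (equivalently, by Lemma \ref{factorsthrough}) there are factorisations $\id_M-\beta\alpha=vu$ with $u\colon M\to R$, $v\colon R\to M$ and $R$ projective relative to $\mathcal{X}$, and $\id_N-\alpha\beta=v'u'$ with $u'\colon N\to S$, $v'\colon S\to N$ and $S$ projective relative to $\mathcal{X}$. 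The crucial point is that $\binom{\alpha}{u}\colon M\to N\oplus R$ is a split monomorphism, since $(\beta\ \ v)\colon N\oplus R\to M$ is a left inverse: $\beta\alpha+vu=\id_M$. Hence $N\oplus R\cong M\oplus C$ for some $\kk G$-module $C$, and by the same argument applied to $\beta$ we obtain $M\oplus S\cong N\oplus C'$ for some $\kk G$-module $C'$.

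The step I expect to be the main obstacle is then showing that $C$ is itself projective relative to $\mathcal{X}$ --- that is, upgrading the ``stable'' equivalence to an honest isomorphism after adding relatively projective summands. I would do this by chaining the two isomorphisms just obtained, $M\oplus C\oplus C'\cong N\oplus R\oplus C'\cong M\oplus S\oplus R$, and then cancelling the common summand $M$ via the Krull--Schmidt theorem for finitely generated $\kk G$-modules to get $C\oplus C'\cong S\oplus R$. Since $S\oplus R$ is projective relative to $\mathcal{X}$ and a direct summand of such a module is again projective relative to $\mathcal{X}$ (Proposition \ref{higman}), $C$ is projective relative to $\mathcal{X}$; taking $P=C$ and $Q=R$ finishes the argument. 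A variant that avoids the cancellation step would be to note that $_{\mathcal{X}}\stmod_{\kk G}$ is itself Krull--Schmidt, since the endomorphism ring of an indecomposable $\kk G$-module stays local (or becomes zero) after passing to this quotient category, and then to read off $C\simeq_{\mathcal{X}}0$ directly from $M\oplus C\cong N\oplus R\simeq_{\mathcal{X}}M$.
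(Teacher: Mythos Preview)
Your argument is correct. Note, however, that the paper does not actually give a proof of this lemma: it simply introduces it with ``The following is now easy to deduce'' and moves on. So there is no proof in the paper to compare against; you have supplied what the paper omits.

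A couple of minor remarks. First, when you invoke Krull--Schmidt to cancel $M$, you implicitly use that $R$ and $S$ can be chosen finitely generated; this is fine since by Lemma~\ref{factorsthrough} one may take $R=\bigoplus_{H\in\mathcal{X}} M\!\downarrow_H\!\uparrow^G$ and similarly for $S$. Second, the fact that a direct summand of a relatively $\mathcal{X}$-projective module is again relatively $\mathcal{X}$-projective is not stated as a separate item in Proposition~\ref{higman}, but it follows immediately from characterisation~(vi). With those points noted, both the main argument and the variant you sketch (cancelling in $_{\mathcal{X}}\stmod_{\kk G}$ via locality of endomorphism rings of indecomposables) are sound; the first is probably closer to what the authors had in mind as ``easy to deduce''.
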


The next result, which we will need in the proof of our main theorem, is a generalisation of \cite[Lemma~3.1]{SymondsCyclic}.
\begin{Prop}\label{prop:frob} Let $G$ be a finite group, $\mathcal{X}$ a set of subgroups of $G$ which is closed under taking subgroups, and $\mathcal{Y}$ a non-empty subset of $\mathcal{X}$. Let $M$ and $N$ be $\kk G$-modules and suppose that either $M$ or $N$ is projective relative to $\mathcal{X}$. Suppose $\alpha \in (M,N)^G$ has the property that $\res^G_H(\alpha)$ factors through a module which is projective relative to the set $H \cap \mathcal{Y}:= \{K \in \mathcal{Y}: K \subseteq H\}$, for every $H \in \mathcal{X}$. Then $\alpha$ factors through a module which is projective relative to $\mathcal{Y}$. 
\end{Prop}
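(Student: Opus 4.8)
The plan is to exploit Higman's criterion (Proposition \ref{higman}(vii)) applied to the module which is projective relative to $\mathcal{X}$, and then to massage the resulting expression for the identity using the properties of transfer recorded in Lemma \ref{transferprops}. Suppose first that $M$ is projective relative to $\mathcal{X}$ (the case where $N$ is projective relative to $\mathcal{X}$ will be symmetric, using Lemma \ref{transferprops}(2) in place of (1)). Then by Higman's criterion there exist homomorphisms $\beta_H \in (M,M)^H$ for $H \in \mathcal{X}$ with $\sum_{H \in \mathcal{X}} \Tr^G_H(\beta_H) = \id_M$. Composing with $\alpha$ on the left and using Lemma \ref{transferprops}(1), we obtain
\[
\alpha = \alpha \circ \id_M = \sum_{H \in \mathcal{X}} \alpha \circ \Tr^G_H(\beta_H) = \sum_{H \in \mathcal{X}} \Tr^G_H\bigl(\res^G_H(\alpha) \circ \beta_H\bigr).
\]
So it suffices to show that each term $\Tr^G_H(\res^G_H(\alpha) \circ \beta_H)$ factors through a module which is projective relative to $\mathcal{Y}$.

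For the individual terms I would use the hypothesis on $\res^G_H(\alpha)$. By assumption $\res^G_H(\alpha)$ factors through a $\kk H$-module which is projective relative to $H \cap \mathcal{Y}$; hence so does $\res^G_H(\alpha) \circ \beta_H$ (precomposing a map that factors through a relatively projective module still factors through it). By Lemma \ref{factorsthrough}, applied inside the category of $\kk H$-modules with the family $H \cap \mathcal{Y}$, there exist homomorphisms $\gamma_K \in (M,N)^K$ for $K \in H \cap \mathcal{Y}$ with $\res^G_H(\alpha) \circ \beta_H = \sum_{K \in H \cap \mathcal{Y}} \Tr^H_K(\gamma_K)$. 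Applying $\Tr^G_H$ and using transitivity of the transfer, $\Tr^G_H \circ \Tr^H_K = \Tr^G_K$, we get
\[
\Tr^G_H\bigl(\res^G_H(\alpha) \circ \beta_H\bigr) = \sum_{K \in H \cap \mathcal{Y}} \Tr^G_K(\gamma_K).
\]
Summing over all $H \in \mathcal{X}$ expresses $\alpha$ as a sum of terms $\Tr^G_K(\gamma_K)$ with each $K \in \mathcal{Y}$ (each such $K$ lies in $\mathcal{Y}$ since $H \cap \mathcal{Y} \subseteq \mathcal{Y}$, and $\mathcal{Y}$ being a subset of $\mathcal{X}$ is harmless here). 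Collecting these, $\alpha = \sum_{K \in \mathcal{Y}} \Tr^G_K(\delta_K)$ for suitable $\delta_K \in (M,N)^K$, and another application of Lemma \ref{factorsthrough} — this time in the reverse direction — shows that $\alpha$ factors through a module projective relative to $\mathcal{Y}$, as desired.

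The case where $N$ rather than $M$ is projective relative to $\mathcal{X}$ proceeds identically after replacing "compose with $\alpha$ on the left" by "compose with $\alpha$ on the right": writing $\sum_{H} \Tr^G_H(\beta_H) = \id_N$ with $\beta_H \in (N,N)^H$ and using Lemma \ref{transferprops}(2) gives $\alpha = \sum_H \Tr^G_H(\beta_H \circ \res^G_H(\alpha))$, and the remainder of the argument is unchanged since $\beta_H \circ \res^G_H(\alpha)$ still factors through an $H\cap\mathcal{Y}$-projective module.

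The main obstacle I anticipate is purely bookkeeping: making sure that Lemma \ref{factorsthrough} is being applied in the correct module category (over $\kk H$, not $\kk G$) with the correct family of subgroups, and that the transitivity formula $\Tr^G_H \circ \Tr^H_K = \Tr^G_K$ for relative traces of homomorphisms is invoked with the right variances — this is standard (it is the homomorphism-module analogue of \cite[Lemma 3.6.3]{Benson1}) but needs to be stated cleanly. One should also double-check the hypothesis that $\mathcal{X}$ is closed under subgroups is what guarantees $H \cap \mathcal{Y}$ makes sense as a family of subgroups of $H$ to which relative projectivity over $\kk H$ can refer; with that in hand the argument is a routine diagram chase through Higman's criterion and the transfer formalism.
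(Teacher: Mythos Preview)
Your proposal is correct and follows essentially the same route as the paper's proof: Higman's criterion for the relatively $\mathcal{X}$-projective module, Lemma~\ref{transferprops} to pull $\alpha$ inside the transfer, Lemma~\ref{factorsthrough} over $\kk H$ with the family $H\cap\mathcal{Y}$, and transitivity $\Tr^G_H\circ\Tr^H_K=\Tr^G_K$. The only slip is that your citations of parts (1) and (2) of Lemma~\ref{transferprops} are interchanged (in the $M$-projective case you need the version with the $G$-map on the left, which is (2), and vice versa for the $N$-projective case).
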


\begin{proof} We give the proof when $M$ is projective relative to $\mathcal{X}$; the proof when $N$ is projective relative to $\mathcal{X}$ is similar. By Lemma \ref{factorsthrough}, we can write, for each $H \in \mathcal{X}$
\[\res^G_H(\alpha) = \sum_{K \in H\cap \mathcal{Y}} \Tr^H_K(\beta_{H,K})\] where $\beta_{H,K} \in (M,N)^K$. Since $M$ is projective relative to $\mathcal{X}$ we can write
\[\Id_M = \sum_{H \in \mathcal{X}} \Tr^G_H(\mu_H)\] for some set of homomorphisms $\{\mu_H \in (M,M)^H: H \in \mathcal{X}\}$. Now we have
\[\alpha = \alpha \circ Id_M = \alpha \circ(\sum_{H \in \mathcal{X}} \Tr^G_H(\mu_H))\]
\[= \sum_{H \in \mathcal{X}} \Tr^G_H(\res^G_H(\alpha)\circ \mu_H)\]
by Lemma \ref{transferprops}(2),
\[= \sum_{H \in \mathcal{X}} \Tr^G_H(\sum_{K \in H \cap \mathcal{Y}} \Tr_K^H(\beta_{H,K}) \circ \mu_H)\]
\[= \sum_{H \in \mathcal{X}} \Tr^G_H(\sum_{K \in H \cap \mathcal{Y}} \Tr_K^H(\beta_{H,K} \circ \res^H_K(\mu_H))\]
\[= \sum_{H \in \mathcal{X}}(\sum_{K \in H \cap \mathcal{Y}} \Tr^G_K(\beta_{H,K} \circ \res^H_K(\mu_H)) \in (M,N)^{G,\mathcal{Y}}\]
as required.
\end{proof}

\begin{Corollary}\label{cor:frob} Let $\alpha, \beta \in (M,N)^G$. Suppose that, for all $H \in \mathcal{X}$, $\res^G_H(\alpha) \equiv_{H \cap \mathcal{Y}} \res^G_{H}(\beta)$. Then $\alpha \equiv_{\mathcal{Y}} \beta$.
\end{Corollary}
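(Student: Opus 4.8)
The plan is to obtain this as an immediate consequence of Proposition \ref{prop:frob}, applied to the difference $\gamma := \alpha - \beta \in (M,N)^G$. Accordingly I take over from Proposition \ref{prop:frob} the standing hypothesis that one of $M$, $N$ is projective relative to $\mathcal{X}$ (this is needed exactly as in the proof of that proposition, where projectivity of $M$ supplies a decomposition $\Id_M = \sum_{H \in \mathcal{X}} \Tr^G_H(\mu_H)$), and I assume as there that $\mathcal{X}$ is closed under taking subgroups and that $\mathcal{Y} \subseteq \mathcal{X}$ is non-empty.

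First I would unwind the hypothesis concretely. By definition of the relative stable module category over the group $H$, the assertion $\res^G_H(\alpha) \equiv_{H \cap \mathcal{Y}} \res^G_H(\beta)$ means that the $\kk H$-homomorphism $\res^G_H(\alpha) - \res^G_H(\beta) : M\downarrow_H \to N\downarrow_H$ represents the zero morphism in $_{H\cap\mathcal{Y}}\stmod_{\kk H}$, i.e. it lies in $(M,N)^{H, H\cap\mathcal{Y}}$; equivalently, by Lemma \ref{factorsthrough} applied with ambient group $H$ and the family $H \cap \mathcal{Y}$, it factors through a $\kk H$-module which is projective relative to $H \cap \mathcal{Y}$. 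Since $\res^G_H$ is $\kk$-linear we have $\res^G_H(\alpha) - \res^G_H(\beta) = \res^G_H(\gamma)$, so the hypothesis is precisely the statement that, for every $H \in \mathcal{X}$, the restriction $\res^G_H(\gamma)$ factors through a module which is projective relative to $H \cap \mathcal{Y}$.

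With this in hand I would invoke Proposition \ref{prop:frob} with $\gamma$ in the role of $\alpha$: its hypotheses are exactly what we have just verified, so the conclusion is that $\gamma$ factors through a $\kk G$-module which is projective relative to $\mathcal{Y}$, that is, $\gamma \in (M,N)^{G,\mathcal{Y}}$. By the definition of $\equiv_{\mathcal{Y}}$ this says $\alpha - \beta \in (M,N)^{G,\mathcal{Y}}$, i.e. $\alpha \equiv_{\mathcal{Y}} \beta$, which is the claim.

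Since the entire substance of the argument is contained in Proposition \ref{prop:frob}, there is no genuine obstacle here; the only thing requiring care is bookkeeping, namely being consistent about the ambient group when interpreting each $\equiv$-symbol — "$\equiv_{H\cap\mathcal{Y}}$" is an equivalence of $\kk H$-module homomorphisms, so that Lemma \ref{factorsthrough} is applied over $H$ rather than over $G$ — and noting that the relative-projectivity hypothesis on $M$ or $N$ must be inherited from the proposition.
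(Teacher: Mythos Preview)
Your proof is correct and is essentially identical to the paper's own proof, which simply says ``Apply Proposition \ref{prop:frob} to $\alpha-\beta$.'' Your unpacking of the hypothesis via Lemma \ref{factorsthrough} over the group $H$ and your remark that the relative-projectivity assumption on $M$ or $N$ is inherited from the proposition are accurate elaborations of that one-line argument.
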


\begin{proof} Apply Proposition \ref{prop:frob} to $\alpha-\beta$.
\end{proof}

Given any $\kk G$-module $M$, it can be shown that there exists a surjective map $j: P \rightarrow M$, where $P$ is a relatively $\mathcal{X}$-projective $\kk G$-module and $j$ splits on restriction to each $H \in \mathcal{X}$. The minimal such $P$ is called the relatively $\mathcal{X}$-projective cover of $M$. This implies the existence, for any $\kk G$-module $M$, of a unique minimal resolution $P_* \rightarrow M$ by relatively $\mathcal{X}$-projective modules which splits on restriction to each $H \in \mathcal{X}$. One can then show, using an argument along the lines of \cite[Proposition~5.2]{CarlsonBook} that a pair of maps $\alpha, \beta \in (M,N)^G$ satisfy $\alpha \equiv_{\mathcal{X}} \beta$ if and only if the maps $\alpha_*$ and $\beta_*$ induced between minimal relatively $\mathcal{X}$-projective resolutions of $M$ and $N$ are chain homotopic. Dually, there exists an injective map $i: M \rightarrow Q$ where $Q$ is a relatively $\mathcal{X}$-projective $\kk G$-module and $i$ splits on restriction to each $H \in \mathcal{X}$. The minimal such $Q$ is called the relatively $\mathcal{X}$-injective hull of $M$, and leads to an analogous theory of minimal relatively $\mathcal{X}$-injective resolutions.

The categories $_\mathcal{X} \stmod_{\kk G}$ are not abelian categories; kernels and cokernels of morphisms are not well-defined. Rather, they are triangulated category. See \cite{NeemanTriangulated} for a full definition of a triangulated category. In a triangulated category there is a construction called a mapping cone, which replaces the cokernel. In $_\mathcal{X} \stmod_{\kk G}$ this works as follows: given any morphism $M \rightarrow N$ we choose a representative $\alpha \in (M,N)^G$. Now let $j: \ker(\alpha) \rightarrow M$ be the canonical inclusion and denote by $i:\ker(\alpha) \rightarrow Q$ the inclusion into the relatively $\mathcal{X}$-injective hull of $\ker(\alpha)$. Since $i$ splits on restriction to each $H \in \mathcal{X}$ there exists a $\theta \in (M,Q)^G$ such that $\theta \circ j = i$. Now define $\alpha': M \rightarrow N \oplus Q$ by $\alpha'(m) = (\alpha(m), \theta(m))$. One can check that $\alpha'$ is injective. Then the mapping cone of the original morphism is defined to be the cokernel of $\alpha'$. It can be shown in the fashion of \cite[Proposition~5.5]{CarlsonBook} (using the chain homotopy property of equivalent morphisms) that this construction is independent  of the choice of $\alpha$. Of course, if $\alpha$ is injective one can take $\alpha = \alpha'$. This implies

\begin{Lemma}\label{lemma:cokernels} Let $\alpha, \beta \in (M,N)^G$ with $\alpha \equiv_{\mathcal{X}} \beta$. Suppose $\alpha$ and $\beta$ are injective. Then $\coker(\alpha) \simeq_{\mathcal{X}} \coker(\beta)$.
\end{Lemma}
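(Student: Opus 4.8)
The plan is to reduce the statement to the description of the mapping cone already assembled in the paragraph preceding the Lemma, together with the chain-homotopy characterisation of $\equiv_{\mathcal{X}}$ via minimal relatively $\mathcal{X}$-injective resolutions. First I would observe that since $\alpha$ is injective, we may take $\alpha' = \alpha$ in the mapping-cone construction, so the mapping cone of the morphism represented by $\alpha$ is literally $\coker(\alpha)$; likewise the mapping cone of the morphism represented by $\beta$ is $\coker(\beta)$. But $\alpha \equiv_{\mathcal{X}} \beta$ means $\alpha$ and $\beta$ represent the \emph{same} morphism $M \to N$ in $_{\mathcal{X}}\stmod_{\kk G}$. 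Hence both $\coker(\alpha)$ and $\coker(\beta)$ are mapping cones of one and the same morphism, and the independence-of-representative statement quoted just before the Lemma (proved in the fashion of \cite[Proposition~5.5]{CarlsonBook}) gives $\coker(\alpha) \simeq_{\mathcal{X}} \coker(\beta)$.

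To make this self-contained I would spell out the key step that makes the cone well-defined: given $\alpha \equiv_{\mathcal{X}} \beta$, the induced chain maps $\alpha_*, \beta_*$ between the minimal relatively $\mathcal{X}$-injective resolutions of $M$ and $N$ are chain homotopic, so the short exact sequences $0 \to M \xrightarrow{\alpha} N \to \coker(\alpha) \to 0$ and $0 \to M \xrightarrow{\beta} N \to \coker(\beta) \to 0$ produce, after passing to relatively $\mathcal{X}$-injective resolutions and invoking the horseshoe-type argument, resolutions of $\coker(\alpha)$ and $\coker(\beta)$ that are chain homotopy equivalent. Translating back through the minimal-resolution dictionary yields $\coker(\alpha) \simeq_{\mathcal{X}} \coker(\beta)$. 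Concretely, one can realise a stable-category equivalence by choosing $\theta \in (N, Q_N)^G$ lifting the inclusion $N \hookrightarrow Q_N$ into the relatively $\mathcal{X}$-injective hull, forming $\coker(\alpha) \hookrightarrow Q_N \oplus (\text{cone of }\alpha_*)$ in the usual way, and checking that the chain homotopy between $\alpha_*$ and $\beta_*$ supplies mutually inverse maps between $\coker(\alpha)$ and $\coker(\beta)$ in $_{\mathcal{X}}\stmod_{\kk G}$.

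I expect the only genuine obstacle to be bookkeeping rather than mathematics: one must be careful that the relatively $\mathcal{X}$-injective resolutions in play are the \emph{minimal} ones (so that the chain-homotopy statement from the analogue of \cite[Proposition~5.2]{CarlsonBook} applies), and that the comparison of cones respects the triangulated structure up to the relatively $\mathcal{X}$-projective (equivalently, relatively $\mathcal{X}$-injective) summands one is allowed to discard. Since all of this infrastructure was set up in the discussion immediately above the Lemma, the cleanest write-up simply cites that discussion: the mapping cone is independent of the chosen representative $\alpha$, and for injective $\alpha$ the mapping cone is $\coker(\alpha)$, so the result is immediate. If a fuller argument is wanted, the chain-homotopy route sketched above fills in the details without any new ideas.
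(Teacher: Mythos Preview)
Your proposal is correct and matches the paper's approach exactly: the paper does not give a separate proof of this lemma but simply observes that the mapping cone is independent of the chosen representative (in the fashion of \cite[Proposition~5.5]{CarlsonBook}) and that for injective $\alpha$ one may take $\alpha'=\alpha$, so the cone is $\coker(\alpha)$. Your first paragraph is precisely this argument, and the additional detail you offer in the second paragraph goes beyond what the paper records.
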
 

We end this section with an elementary result which will be useful in section \ref{sec:invthy}.
\begin{Lemma}\label{relprojinduced} Let $M$ be a $\kk G$-module which is projective relative to a set $\mathcal{X}$ of subgroups of $G$. Then $M^G = \sum_{H \in \mathcal{X}} \Tr^G_H(M^H)$. 
\end{Lemma}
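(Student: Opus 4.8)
The plan is to apply Higman's criterion (Proposition~\ref{higman}(vii)) together with the first compatibility property of the transfer (Lemma~\ref{transferprops}(1)). First I would use the hypothesis that $M$ is projective relative to $\mathcal{X}$ to write, via Proposition~\ref{higman}(vii), a decomposition $\Id_M = \sum_{H \in \mathcal{X}} \Tr^G_H(\beta_H)$ for suitable $\beta_H \in (M,M)^H$. One inclusion is trivial: since each $\Tr^G_H(M^H) \subseteq M^G$, the subspace $\sum_{H \in \mathcal{X}} \Tr^G_H(M^H)$ is certainly contained in $M^G$.

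For the reverse inclusion, I would take an arbitrary $v \in M^G$ and apply the identity $\Id_M$ to it, writing
\[
v = \Id_M(v) = \sum_{H \in \mathcal{X}} \Tr^G_H(\beta_H)(v).
\]
Now the key point is to recognise each summand $\Tr^G_H(\beta_H)(v)$ as lying in $\Tr^G_H(M^H)$. Regard $v \in M^G$ as the $\kk G$-homomorphism $\kk \to M$ sending $1 \mapsto v$; then $\Tr^G_H(\beta_H)(v)$ is exactly $\Tr^G_H(\beta_H) \circ v$ in the sense of composition of homomorphisms $\kk \to M \to M$. By Lemma~\ref{transferprops}(1) (with the roles of the modules being $\kk$ and $M$), this equals $\Tr^G_H(\beta_H \circ \res^G_H(v))$. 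But $\res^G_H(v)$ is just $v$ viewed as an element of $M^H$, so $\beta_H \circ \res^G_H(v) \in M^H$, and hence $\Tr^G_H(\beta_H)(v) = \Tr^G_H(\beta_H \circ \res^G_H(v)) \in \Tr^G_H(M^H)$. Summing over $H \in \mathcal{X}$ gives $v \in \sum_{H \in \mathcal{X}} \Tr^G_H(M^H)$, as required.

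The only mildly delicate point — and the step I would be most careful about — is the bookkeeping identification of ``evaluation at $v$'' with ``composition with the homomorphism $\kk \to M$ determined by $v$,'' so that Lemma~\ref{transferprops}(1) applies cleanly and the transfer on $\Hom$-spaces matches the transfer on the module $M$. This is routine but worth spelling out, since it is exactly the mechanism that converts Higman's criterion into a statement about fixed points. Alternatively, one could avoid the $\Hom$-space formalism entirely and argue directly: write $v = \sum_H \Tr^G_H(\beta_H)(v)$, expand $\Tr^G_H(\beta_H)(v) = \sum_{\sigma \in S_H} \sigma \beta_H(\sigma^{-1} v)$, and observe that since $v$ is $G$-fixed this is $\sum_{\sigma} \sigma \beta_H(v) = \Tr^G_H(\beta_H(v))$ with $\beta_H(v) \in M^H$ because $\beta_H$ is $H$-equivariant and $v$ is in particular $H$-fixed. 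Either route completes the proof; I would likely present the second, more hands-on version for transparency.
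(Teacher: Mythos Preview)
Your proposal is correct and matches the paper's proof essentially verbatim: the paper also invokes Higman's criterion to write $\Id_M = \sum_{H \in \mathcal{X}} \Tr^G_H(\beta_H)$, applies this to $v \in M^G$, and then uses exactly your ``hands-on'' computation $\Tr^G_H(\beta_H)(v) = \sum_{\sigma} \sigma \beta_H(\sigma^{-1}v) = \sum_{\sigma} \sigma \beta_H(v) = \Tr^G_H(\beta_H(v))$ together with $\beta_H(v) \in M^H$. Your alternative route via Lemma~\ref{transferprops}(1) is a fine repackaging of the same step, but the direct version you settle on is precisely what the paper does.
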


\begin{proof} It suffices to prove $M^G \subseteq \sum_{H \in \mathcal{X}} \Tr^G_H(M^H)$, the reverse inclusion being clear. As $M$ is projective relative to $\mathcal{X}$, there exists a set of homomorphisms $\{\beta_H \in (M,M)^H, H \in \mathcal{X}\}$ such that $\Id_M = \sum_{H \in \mathcal{X}} \Tr^G_H(\beta_H)$. Now let $v \in M^G$. As $v \in M^H$ for all $H \in \mathcal{X}$, we have $\beta_H(v) \in M^H$ for all $H \in \mathcal{X}$, and
 \[\Tr^G_H(\beta_H)(v) = \sum_{\sigma \in S} \sigma \beta_H (\sigma^{-1} v) = \sum_{\sigma \in S} \sigma \beta_H (v) = \Tr^G_H (\beta_H(v))  \]
 where $S$ is a left-transversal of $H$ in $G$. Therefore
 \[v = \Id_M(v) = \sum_{H \in \mathcal{X}} \Tr^G_H(\beta_H)(v) = \sum_{H \in \mathcal{X}} \Tr^G_H(\beta_H(v)) \in \sum_{H \in \mathcal{X}}\Tr^G_{H}(M^H) \]
 as required.
\end{proof}


\section{Decomposing Symmetric Powers}

\subsection{Periodicity}\label{sec:periodicty}

We begin by describing a decomposition of symmetric powers applicable to all $p$-groups. 
Let $G$ be any finite $p$-group, $\kk$ a field of characteristic $p$, and let $V$ be any finite-dimensional indecomposable $\kk G$-module.   It is well-known that one may choose a basis $\{x_0,x_1,\ldots, x_m\}$ with respect to which the action of $G$ is lower-unitriangular, preserving the flag of subspaces $\langle x_m \rangle \subset \langle x_{m-1},x_m \rangle \subset \ldots \subset \langle x_0,x_1,\ldots,x_m \rangle $. We will refer to $x_0$ as the ``terminal'' variable and to $x_m$ as the ``initial'' variable. For any $x \in V$ we set $$N_G(x) = \prod_{y \in Gx} y$$ where $Gx$ denotes the orbit of $x$ under $G$. 

Now let $q$ denote the order of $G$, and let $B(V)$ be the set of all polynomials in $x_0,x_1,\ldots,x_m$ whose degree as a polynomial in $x_0$ alone is strictly less than $q$. $B(V) = \bigoplus_{d \geq 0}B^d(V)$ is graded by total degree. Since $G$ fixes the subspace $\langle x_1,x_2,\ldots, x_{m} \rangle$, $B(V)$ is a $\kk G$-submodule of $S(V)$. Further, given any $f \in S^d(V)$ with $x_0$-degree $\geq q$ we may perform  long division, writing uniquely $f = N_G(x_0)^a f' + b$ with $f' \in S(V)^{d-q}$ and $b \in B(V)$, where $a = q/|Gx_0|$.  We therefore obtain an isomorphism of graded $\kk G$-modules

\begin{equation}\label{decomp} 
S(V)^d \cong N_G(x_0)^a \otimes S(V)^{d-q} \oplus B^d(V).
\end{equation}

\begin{rem}\label{propogate} Suppose $W$ is a direct summand of $S^d(V)$, and $f \in S^r(V)^G$. Then $f \otimes W$ is a submodule of $S^{d+r}(V)$ in general. One way of viewing the above is to say that $N_G(x_0) \otimes W$ is always a direct summand of $S^{d+q}(V)$. We sometimes say that $W$ is \emph{propagated} by the invariant $N_G(x_0)$. Note that if $W$ is projective, then since projective modules are injective we have that $f\otimes W$ is a direct summand of $S^{d+r}(V)$ for any $f \in S^r(V)^G$ - in other words, the projective direct summands are propagated by every invariant. 
\end{rem}

Now since multiplication with $x_m$ induces an injective map $S^d(V) \rightarrow S^{d+1}(V)$, there is an exact sequence of $\kk G$-modules
$$ 0 \longrightarrow S^{d}(V) \stackrel{\times x_m}{\longrightarrow} S^{d+1}(V) \longrightarrow S^{d+1}(V/x_m) \longrightarrow 0. $$
As multiplication by $x_m$ does not affect the $x_0$-degree and the second map does not increase it, this restricts  to an exact sequence of $\kk G$-modules
 $$ 0 \longrightarrow B^{d}(V) \stackrel{\alpha_d}{\longrightarrow} B^{d+1}(V) \longrightarrow B^{d+1}(V/x_m) \longrightarrow 0. $$

\subsection{Additive subgroups of fields of prime characteristic}\label{sec:additive}
Let $E$ be an elementary abelian $p$-group of order $q=p^n$ and $V$ a 2-dimensional faithful $\kk E$-module. As every representation of a $p$-group is conjugate to one in upper-unitriangular form, we may fix a basis $\{X,Y\}$ of $V$ such that the action of each $\alpha \in E$ is given by $\alpha \cdot X = X, \alpha \cdot Y = Y+\rho(\alpha) X$, where $\rho: E \rightarrow (\kk,+)$ is a homomorphism, which must be injective as $V$ is faithful. This allows us to regard $E$ as an additive subgroup of $\kk$ and prompts the study of such subgroups.

\begin{Lemma}\label{lemma:lidl} Let $G \leq \kk$ be an additive subgroup. Define the polynomial $$T_G(x) = \prod_{\alpha \in G}(x-\alpha).$$ Then $T_G(x)$ is a linearized polynomial, i.e. $$T_G(x) = \sum_{i=0}^n b_i x^{p^i}$$ for some coefficients $b_i \in \kk$.
\end{Lemma}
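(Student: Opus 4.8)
The plan is to argue by induction on $n$, where $|G| = p^n$. (Note $G$ must be finite and elementary abelian of $p$-power order: since $\operatorname{char}\kk = p$ we have $p\alpha = 0$ for every $\alpha \in G$, so $G$ is an $\FF_p$-vector space, and finiteness is implicit in $T_G$ being a polynomial.) The base case $n=0$ is trivial, $T_{\{0\}}(x) = x$.

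For the inductive step I would fix a subgroup $H \leq G$ of index $p$ together with an element $c \in G \setminus H$, so that $G = \bigsqcup_{i \in \FF_p}(H+ic)$. Collecting the linear factors of $T_G$ according to these cosets gives
\[T_G(x) = \prod_{i \in \FF_p}\ \prod_{h \in H}(x - ic - h) = \prod_{i \in \FF_p} T_H(x - ic).\]
By the inductive hypothesis $T_H(x) = \sum_{j=0}^{n-1} b_j x^{p^j}$ is linearized, hence additive, so $T_H(x - ic) = T_H(x) - i\,T_H(c)$ (using $T_H(0)=0$ and additivity to pull the integer $i$ out). Putting $u = T_H(x)$ and $\delta = T_H(c)$ and using the classical identity $\prod_{a\in\FF_p}(t-a) = t^p - t$, a one-line computation gives $\prod_{i\in\FF_p}(u - i\delta) = u^p - \delta^{p-1}u$ — and this remains correct when $\delta = 0$. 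Therefore
\[T_G(x) = T_H(x)^p - T_H(c)^{p-1}\,T_H(x).\]
Since the $p$-th power map in characteristic $p$ sends $\sum_j b_j x^{p^j}$ to $\sum_j b_j^p x^{p^{j+1}}$, and since the set of linearized polynomials is visibly closed under $\kk$-linear combinations, the right-hand side is again a linearized polynomial of degree $p^n = |G|$, of the required shape $\sum_{j=0}^{n} b_j' x^{p^j}$. This closes the induction.

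I do not expect a genuine obstacle here: the argument is essentially coset bookkeeping plus the fact that linearized polynomials form a $\kk$-subspace stable under Frobenius. The only points needing a little care are the degenerate cases ($G = \{0\}$, and $\delta = 0$ in the product identity) and ensuring the inductive hypothesis is invoked only through the additivity of $T_H$. As an alternative one could prove additivity of $T_G$ directly — for fixed $\beta \in G$ the polynomial $T_G(x+\beta) - T_G(x) - T_G(\beta)$ has $x$-degree $< |G|$ but vanishes for every $x \in G$, hence is identically zero, and one then varies $\beta$ and uses symmetry in $x,\beta$ — and afterwards quote the standard structure theorem identifying additive polynomials over a field of characteristic $p$ with $p$-polynomials; but the inductive route above has the advantage of being self-contained.
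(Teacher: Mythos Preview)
Your inductive argument is correct. The paper itself does not prove this lemma at all: it simply cites \cite[Theorem~3.52]{LidlNiederreiter}. So your proof takes a genuinely different route in the sense that it is an actual proof rather than an appeal to the literature. The induction on $|G|$ via an index-$p$ subgroup $H$, the identity $T_G(x) = T_H(x)^p - T_H(c)^{p-1}T_H(x)$, and the closure of linearized polynomials under Frobenius and $\kk$-linear combinations are all standard and cleanly executed; this is essentially the argument one finds in Lidl--Niederreiter anyway. One small remark: your aside about the case $\delta = 0$ is harmless but in fact vacuous, since $c \notin H$ forces $T_H(c) = \prod_{h \in H}(c-h) \neq 0$. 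What the paper's approach buys is brevity and an authoritative reference; what yours buys is self-containment, which is arguably preferable here since the lemma is used immediately in Corollary~\ref{cor:powersums} and the reader benefits from seeing why it holds. Your alternative sketch (prove additivity of $T_G$ directly, then invoke the structure theorem for additive polynomials) also works, and is in fact closer in spirit to the way Lidl--Niederreiter organise the theory.
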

\begin{proof} See \cite[Theorem~3.52]{LidlNiederreiter}.
\end{proof}
The author thanks Jyrki Lahtonen for bringing this lemma to his attention.

\begin{Corollary}\label{cor:powersums} The power sum $$S_{i}(G)  = \sum_{\alpha \in G} \alpha^i$$ is zero for all $i<q-1$. Further, $S_{q-1}(G) \in \kk$ is not zero.
\end{Corollary}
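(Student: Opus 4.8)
The plan is to exploit the linearized polynomial $T_G(x) = \sum_{i=0}^n b_i x^{p^i}$ supplied by Lemma \ref{lemma:lidl}, combined with the logarithmic derivative trick. First I would record the identity
$$\frac{T_G'(x)}{T_G(x)} = \sum_{\alpha \in G} \frac{1}{x-\alpha},$$
which is valid because $T_G(x) = \prod_{\alpha \in G}(x-\alpha)$ has distinct roots. Then I would differentiate the linearized form: since $p \mid p^i$ for every $i \geq 1$, all the terms $b_i p^i x^{p^i-1}$ with $i \geq 1$ vanish in characteristic $p$, so $T_G'(x) = b_0$ is a constant. Moreover $b_0 \neq 0$, since $b_0$ is the coefficient of $x$ in $\prod_{\alpha \in G}(x-\alpha)$, which (as $0 \in G$) equals $\prod_{\alpha \in G \setminus \{0\}}(-\alpha)$, a product of nonzero elements of the field $\kk$.

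Next I would pass to formal Laurent series in $x^{-1}$, working in $\kk((x^{-1}))$. On one side,
$$\sum_{\alpha \in G} \frac{1}{x-\alpha} = \sum_{\alpha \in G} \sum_{j \geq 0} \alpha^j x^{-j-1} = \sum_{j \geq 0} S_j(G)\, x^{-j-1}.$$
On the other side, $T_G(x) = x^q + b_{n-1}x^{p^{n-1}} + \cdots + b_0 x = x^q(1+u)$, where $u = \sum_{i=0}^{n-1} b_i x^{p^i - q}$ is a sum of strictly negative powers of $x$ (each exponent $p^i - q \leq p^{n-1}-q \leq -1$), so $u$ lies in the maximal ideal of $\kk[[x^{-1}]]$. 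Hence $1/T_G(x) = x^{-q}(1+u)^{-1} = x^{-q} + (\text{terms of degree} < -q)$, and therefore $b_0/T_G(x) = b_0 x^{-q} + (\text{terms of degree} < -q)$. Comparing coefficients of $x^{-j-1}$ on the two sides: for $j < q-1$ the right-hand side contributes $0$, giving $S_j(G) = 0$; and the coefficient of $x^{-q}$ gives $S_{q-1}(G) = b_0 \neq 0$.

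There is no serious obstacle here; the only point requiring care is the formal-series bookkeeping — that the geometric expansions of $(x-\alpha)^{-1}$ and the inversion of $1+u$ genuinely take place in $\kk((x^{-1}))$, and that every error term really has degree strictly below $-q$, which is immediate from $p^{n-1} < q$. One could alternatively invoke Newton's identities together with the observation that the elementary symmetric functions of $G$ vanish except in degrees $0$ and $q-p^i$, but the logarithmic-derivative computation is cleaner and makes the nonvanishing of $S_{q-1}(G)$ transparent.
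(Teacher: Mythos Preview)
Your proof is correct and takes a genuinely different route from the paper's. The paper argues via the Newton--Girard identities: writing $T_G(x) = \sum_j (-1)^{q-j} e_{q-j} x^j$, Lemma~\ref{lemma:lidl} forces $e_j = 0$ for $0 < j < q$ unless $j = q - p^m$, and for $m \geq 1$ such $j$ are divisible by $p$; feeding this into the determinantal form of Newton's identities makes the first column vanish identically when $i < q-1$, while for $i = q-1$ the determinant collapses to $\pm e_{q-1} = \pm\prod_{\alpha \neq 0}\alpha$. Your logarithmic-derivative argument bypasses the determinant entirely: differentiating the linearized form kills every term but the constant $b_0$, and a single Laurent expansion at infinity then reads off all the $S_j$ at once. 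This is arguably cleaner --- both the vanishing for $j < q-1$ and the explicit value $S_{q-1}(G) = b_0 = \prod_{\alpha \neq 0}(-\alpha)$ drop out of one coefficient comparison --- whereas the paper's route stays within elementary symmetric functions and avoids formal Laurent series. You yourself flag the Newton-identity alternative at the end; that is precisely what the paper does.
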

\begin{proof} We have $T_G(x) = \sum_{j=0}^{q} e_{q-j} x^j$ where $e_j$ denotes the degree $j$ elementary symmetric polynomial in the elements of $G$. For $j<q$, Lemma \ref{lemma:lidl} implies that $e_j=0$ unless $j=q-p^m$ for some $ 0 \leq m < n$. Now the power sums may be expressed in terms of elementary symmetric polynomials by means of  the Newton-Girard identities; these are most readily written in matrix form as
\[S_{i}(G) = \begin{vmatrix} e_1 & 1 & 0 & \ldots & 0 \\ 2e_2 & e_1 & 1 & \ldots & 0 \\ \vdots & \vdots & \vdots & \vdots & \vdots \\ ie_i & e_{i-1} & e_{i-2} & \ldots & e_1   \end{vmatrix}.\]
Now we see straight away that the leftmost column consists entirely of zeroes if $i<q-1$, since for all $j \leq i$ we have either $e_j = 0$ or $j \equiv 0 \mod p$. We also see that $S_{q-1}(G) = -e_{q-1}$. Now $e_{q-1}$ is equal to $$\sum_{\beta \in G}\prod_{\alpha \in G, \alpha \neq \beta} \alpha.$$ But here the summands are all zero except when $\beta = 0$, hence $$e_{q-1} = \prod_{\alpha \in G, \alpha \neq 0} \neq 0.$$
\end{proof}

\subsection{Representations of elementary abelian $p$-groups}
For the rest of this section, let $E \leq \kk$ and let $V = \langle X, Y \rangle$ be a 2-dimensional faithful $\kk E$-module with action as in section \ref{sec:additive}. We want to study the modules $S^m(V)^*$. For any $i \leq m$ set $a_i = X^{m-i}Y^i$. Then the set $a_0,a_1,\ldots,a_m$ forms a basis of $S^m(V)$, and the action of $\alpha \in E$ on this basis is given by \begin{equation}\label{basisv} \alpha \cdot a_i = \sum_{j=0}^i {\tiny\begin{pmatrix} i \\ j  \end{pmatrix}} \alpha^ja_{i-j}. \end{equation} Notice that this does not depend on $m$; we have an inclusion $S^{m}(V) \subset S^{m+1}(V)$ for any $m \geq 0$.  Now let $x_0,x_1\ldots,x_m$ be the corresponding dual basis of $S^m(V)^*$; the action here is given by \begin{equation}\label{basisvstar} \alpha \cdot x_i = \sum_{j=0}^{m-i}{\tiny\begin{pmatrix} i+j \\ i  \end{pmatrix}} (-\alpha)^jx_{i+j}.\end{equation}

Note in particular that $x_m \in (S^m(V)^*)^E$ and $S^m(V)^*/\langle x_m \rangle \cong S^{m-1}(V)^*$. This follows because $x_m = S^{m-1}(V)^{\perp}$. We adopt the convention that for a natural number $r$, $rW$ denotes the direct sum of $r$ copies of $W$.   
\begin{Prop}\label{prop:smvstar} The following is true of the modules $S^m(V)^*$:
\begin{enumerate}

\item[(i)] $S^m(V)^*$ is indecomposable for $m \leq q-1$, and $S^{q-1}(V)^* \cong \kk E$.
\item[(ii)] $S^{qr+m}(V)^* \cong S^m(V)^* \oplus r \kk E$ for $m \leq q-1$ and any $r$.
\end{enumerate}
\end{Prop}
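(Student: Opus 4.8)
The plan is to prove parts (i) and (ii) together, using the structural results assembled so far, particularly the norm/decomposition \eqref{decomp}, the power-sum vanishing Corollary \ref{cor:powersums}, and the basic representation theory of $p$-groups (a $\kk E$-module is projective iff it is free iff every element of $E$ acts without nontrivial fixed vectors ``modulo'' the augmentation, and a $p$-group has a unique indecomposable projective, namely $\kk E$ itself).

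\textbf{Step 1: identify $S^{q-1}(V)^*$ with $\kk E$.} First I would show that $S^{q-1}(V)^*$ is free of rank $1$, i.e. isomorphic to $\kk E$. The cleanest route: $\kk E$ has dimension $q$, which matches $\dim S^{q-1}(V)^* = q$, so it suffices to produce a cyclic generator whose stabilizer-type behaviour forces freeness, or equivalently to show $S^{q-1}(V)^*$ is projective (a projective $\kk E$-module of dimension $q$ is exactly $\kk E$). To see projectivity, I would use Higman's criterion (Proposition \ref{higman}(vii)) with $\mathcal{X}=\{1\}$: exhibit $\beta \in \Hom_\kk(S^{q-1}(V)^*, S^{q-1}(V)^*)$ with $\Tr^E_1(\beta) = \id$. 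A natural candidate is (a scalar multiple of) the projection onto the line $\langle x_{q-1}\rangle$ along the $E$-submodule complement spanned by the other basis vectors in the appropriate filtration; the transfer of this rank-one map, computed against the explicit action \eqref{basisvstar}, will involve exactly the power sums $S_i(E)$, and Corollary \ref{cor:powersums} guarantees that all the ``unwanted'' terms vanish while the leading term $S_{q-1}(E)$ is a nonzero scalar. Dividing by that scalar gives $\Tr^E_1(\beta)=\id$, hence projectivity, hence $S^{q-1}(V)^* \cong \kk E$. This is the step I expect to be the main obstacle: pinning down the right $\beta$ and carrying out the transfer computation so that the power-sum identities cut through cleanly.

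\textbf{Step 2: indecomposability for $m \le q-1$.} For $m<q-1$, the module $S^m(V)^*$ is \emph{not} projective — indeed $x_m$ spans a one-dimensional fixed subspace and one checks (using \eqref{basisvstar}) that $(S^m(V)^*)^E$ is exactly one-dimensional, so $S^m(V)^*$ cannot contain a free summand (a free summand would contribute its own fixed line) when $\dim S^m(V)^* = m+1 < q$. More to the point, I would argue indecomposability directly: any direct sum decomposition would split the fixed space, but $\dim (S^m(V)^*)^E = 1$; since every nonzero $\kk E$-module has nonzero fixed points ($E$ is a $p$-group), a decomposition into two nonzero summands would give a fixed space of dimension $\ge 2$, a contradiction. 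The computation $\dim (S^m(V)^*)^E = 1$ follows from \eqref{basisvstar}: a fixed vector $\sum c_i x_i$ must satisfy, for all $\alpha \in E$, the linear relations forcing $c_i = 0$ for $i < m$ (the faithfulness of $\rho$, i.e. $|E|=q$ distinct scalars, makes the relevant Vandermonde-type conditions non-degenerate as long as $m \le q-1$). The case $m = q-1$ is consistent: $\kk E$ is indecomposable.

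\textbf{Step 3: periodicity, part (ii).} With (i) in hand, apply the decomposition \eqref{decomp} to $V' := S^m(V)$ — wait, more precisely I apply it to the ring $S(W)$ where $W$ is the 2-dimensional module and track symmetric powers, but the cleaner formulation is: set up the analogue of \eqref{decomp} for the graded module $S^\bullet(S^m(V)^*)$ using the terminal/initial variables of $S^m(V)^*$, for which the norm of the terminal variable has degree $q$ (since $E$ acts faithfully on the relevant orbit, $|E x| = q$). Then \eqref{decomp} gives $S^{qr+m'}(\,\cdot\,) \cong N(x_0)^r \otimes S^{m'}(\,\cdot\,) \oplus (\text{lower-}x_0\text{-degree part})$ — hmm, but (ii) is a statement at the level of $S^m(V)^*$ itself, not its symmetric powers. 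Rather, the right mechanism: use that $S^m(V)^* \subset S^{m+1}(V)^*$ with quotient $S^{m-1}(V)^*$ (stated just before the Proposition) to induct, OR — cleaner — observe $S^{qr+m}(V)^* $ is built from $T^\bullet$ of the 2-dimensional $V^*$ and invoke \eqref{decomp} with $G = E$, $V = V^*$ (the 2-dimensional module), $d = qr+m$: this yields $S^{qr+m}(V^*) \cong N_E(x_0)^r \otimes S^m(V^*) \oplus B^{qr+m}(V^*)$ where $B$ is the span of monomials of $x_0$-degree $< q$. The summand $N_E(x_0)^r \otimes S^m(V^*) \cong S^m(V^*)$ as a $\kk E$-module since $N_E(x_0)$ is an $E$-invariant of degree $q$ (here using $|Ex_0| = q$). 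It then remains to show $B^{qr+m}(V^*) \cong r\,\kk E$ (for the relevant case; the bookkeeping of exponents must be matched to the statement's $S^{qr+m}(V)^*$ versus $S^m(V)^*$). For that I would use the exact sequences from section \ref{sec:periodicty} relating $B^d, B^{d+1}$ and $B^{d+1}(V/x_m)$ together with Step 1 ($S^{q-1}(V)^* \cong \kk E$) and a dimension count: $\dim B^{qr+m}(V^*) = (qr+m+1) - r(m+1)\cdot(\text{correction})$ — I will compute this exactly — and show every indecomposable summand of $B^{qr+m}(V^*)$ of ``$x_0$-degree reaching $q-1$'' is free, peeling off free summands one at a time via \eqref{decomp} applied in lower-degree pieces, until the residue is $S^m(V^*)$. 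The induction on $r$ reduces everything to the case $r=1$, i.e. $S^{q+m}(V^*) \cong S^m(V^*) \oplus \kk E$, which is the heart of the matter and again leans on the power-sum vanishing of Corollary \ref{cor:powersums} to certify the freeness of the extra summand.

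In summary: (i) comes down to a transfer computation (Higman's criterion) powered by Corollary \ref{cor:powersums}, plus a fixed-space dimension count for indecomposability; (ii) is then a formal consequence of the norm decomposition \eqref{decomp} together with (i), via induction on $r$. The genuinely hard step is Step 1 — producing the explicit $\beta$ and grinding the transfer against \eqref{basisvstar} so that the power sums collapse it to a scalar multiple of the identity.
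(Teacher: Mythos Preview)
Your Step~2 contains a genuine error: the claim $\dim((S^m(V))^*)^E = 1$ is false once $m \geq p$. For a concrete counterexample take $p=2$, $m=2$, $|E|\ge 4$: from \eqref{basisvstar} one has $\alpha\cdot x_1 = x_1 + \binom{2}{1}(-\alpha)x_2 = x_1$ and $\alpha\cdot x_2 = x_2$, so both $x_1$ and $x_2$ are fixed and the socle is two-dimensional. Your ``Vandermonde-type'' relations involve the factors $\binom{k}{i}$, and these vanish modulo $p$ often enough to kill the argument. The module $(S^m(V))^*$ is still indecomposable, but for a different reason: $x_0$ is a cyclic generator (the matrix $[(-\alpha)^j]_{\alpha\in E,\,0\le j\le m}$ is a genuine Vandermonde of full column rank $m+1\le q$, with no binomial obstructions), so the \emph{head} $M/\operatorname{rad}M$ is one-dimensional, and for a $p$-group that forces indecomposability. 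The paper avoids this subtlety entirely by working on the undualised side: it quotes that $S(V)^E=\kk[X,N_E(Y)]$, reads off $\dim(S^m(V))^E=1$ from the Hilbert series for $m<q$, concludes $S^m(V)$ is indecomposable, and then dualises.

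Your Step~1 also fails as written. If $\beta$ is projection onto $\langle x_{q-1}\rangle$ then, using \eqref{basisvstar} and Corollary~\ref{cor:powersums}, one finds $\Tr^E(\beta)(x_i)=\binom{q-1}{i}S_{q-1-i}(E)\,x_{q-1}$, which vanishes for $i>0$ and is a nonzero multiple of $x_{q-1}$ for $i=0$; so $\Tr^E(\beta)$ has rank one, not the identity. The direct route is simply to note that $\{\alpha\cdot x_0:\alpha\in E\}$ is linearly independent (Vandermonde with coefficients $\binom{q-1}{i}(-\alpha)^i$, and $\binom{q-1}{i}\not\equiv 0\pmod p$ by Lucas), so $x_0$ is a free generator and $(S^{q-1}(V))^*\cong\kk E$. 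Finally, in Step~3 you conflate $S^{d}(V^*)$ with $(S^{d}(V))^*$; these differ for $d\ge p$. The clean version of your idea is to apply \eqref{decomp} to the two-dimensional $V$ (terminal variable $Y$), observe that for $d\ge q$ the piece $B^d(V)=X^{d-q+1}\!\cdot S^{q-1}(V)\cong\kk E$, iterate to get $S^{qr+m}(V)\cong S^m(V)\oplus r\,\kk E$, and only then dualise using $(\kk E)^*\cong\kk E$. The paper does essentially this, but packages it globally: it writes down the graded summand $T$ of \eqref{T} and shows $T=S(V)$ by a Hilbert-series count.
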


\begin{proof} By  \cite[Proposition~3.2]{CSWEltAbelian}, the ring of invariants $S(V)^E$ is a polynomial algebra generated by $X$ and $N_E(Y)$. Therefore the Hilbert Series of $S(V)^E$ is 
\begin{equation}\label{hilbertseries}
{ \frac{1}{(1-t)(1-t^q)}}.
\end{equation}
and so $\dim(S^m(V)^E)=1$ for $m \leq q-1$. Since a module is indecomposable if and only if its dual is, and since $\dim(S^m(V)^*)=m+1$ we obtain (i).\\
Now let $P$ denote the projective module $S^{q-1}(V)$, and let $B = \oplus_{i=0}^{q-2} S^{i}(V)$. We form the graded submodule $T = \bigoplus_{d \geq 0} T^d$ of $S(V)$ defined as
\begin{equation}\label{T} T = (\kk[X,N_E(Y)] \otimes P) \oplus (\kk[N_E(Y)] \otimes B), \end{equation} with grading induced from that on $S(V)$.
By Remark \ref{propogate} , $T$ is a direct summand of $S(V)$.  Clearly $T^{qr+m} \cong r(\kk E) \oplus S^m(V)$. The Hilbert series of $\kk[N_E(Y)]$ is $\frac{1}{1-t^q}$. As the dimension of $B$ in degree $k$ is $k+1$ if $k \leq q-2$ and zero otherwise, we have 
$$H(B,t) = 1+2t+3t^2+\ldots+(q-1)t^{q-2} = \frac{d}{dt}\left(\frac{1-t^q}{1-t}\right) = \frac{-qt^{q-1}}{1-t}+\frac{1-t^q}{(1-t)^2}.$$
Finally, as $P$ has dimension $q$ and lies in degree $q-1$, we have $H(P,t) = qt^{q-1}$.
Therefore $$H(T,t) = qt^{q-1}\frac{1}{(1-t)(1-t^q)} + \frac{1}{1-t^q} \left( \frac{-qt^{q-1}}{1-t}+\frac{1-t^q}{(1-t)^2} \right) = \frac{1}{(1-t)^2} = H(S(V),t).$$
Therefore $T = S(V)$. Taking duals on both sides gives the required result.
 \end{proof}

We need a little more information about the decomposition above. Suppose that $0 \leq m<q$ and let $\{x_0,x_1,\ldots x_{qr+m}\}$ be a basis of $W = S^{qr+m}(V)^*$ such that the action of $E$ on $W$ is given by (\ref{basisvstar}). Then we have
\[W/ \langle x_{qr+m} \rangle \cong S^{qr+m-1} \cong \left\{ \begin{array}{lr} rS^{q-1}(V)^* \oplus (S^{m-1}(V)^*) & m \neq 0; \\ rS^{q-1}(V)^*  & m = 0. \end{array} \right. \] This tells us immediately that $x_{qr+m}$ is contained in a summand of $S^{qr+m}(V)^*$ isomorphic to $S^m(V)^*$. Further, we observe that
\begin{Lemma}\label{lemma:wheresthefixedpoint}
The projective summand of $W$ is spanned by $$\{\alpha \cdot x_{iq}: \alpha \in E, i=0, \ldots r-1.\}$$
\end{Lemma} 

\begin{proof} Recall that for any $p$-group $P$ an indecomposable $\kk P$-module $M$ is projective if and only if $\Tr^P(M) \neq 0$. Further, $\Tr^E(\kk E)$ is one-dimensional. Now observe that for every $ 0 \leq i<r$ we have
\[\Tr^E (x_{iq}) = \sum_{j=0}^{m-iq} \begin{pmatrix} iq+j \\ iq \end{pmatrix} \left( \sum_{\alpha \in E} (-\alpha)^j \right) x_{iq+j}\]
\[ = \begin{pmatrix} iq+q-1 \\ iq \end{pmatrix} \left(\prod_{\alpha \in E, \alpha \neq 0} \alpha \right) x_{iq+q-1} + \text{lower degree terms} \] by Corollary \ref{cor:powersums}. The binomial coefficient here is equal to $1 \in \kk$ by Lucas' Theorem (see \cite{FineLucas}).
Consequently $\Tr(x_{iq}) \neq 0$ and $\{\alpha \cdot x_{iq}: \alpha \in E\}$ spans a projective indecomposable summand of $W$ for each $i$.
\end{proof}

Being projective, the modules $S^{q-1}(V)^*$ are permutation modules. It follows that their symmetric powers are also permutation modules. The next lemma helps identify the isomorphism classes of these permutation modules. For any $k \leq n$ we denote by $\mathcal{X}_k$ the set of subgroups of $E$ with order $\leq p^k$.
\begin{Lemma}\label{lemma:powersofproj}
Given $d>0$ we write $d=rp^k$ where $k \leq n$ is maximal such that $p^k$ divides $d$. Then we have
\begin{enumerate}
\item[(i)] If $k<n$, $S^d(\kk E) \simeq_{\mathcal{X}_k} 0$.
\item[(ii)] If $k=n$ then $S^d(\kk E) \simeq_{\mathcal{X}_{n-1}} \kk.$
\item[(iii)] For any $k$ we have more generally $$S^d(\kk E) \simeq_{\mathcal{X}_{k-1}} \bigoplus_{E' \leq E, |E'|=p^k} \frac{1}{p^{n-k}} \begin{pmatrix} p^{n-k}+r-1 \\ r \end{pmatrix} \kk \uparrow_{E'}^E .$$
\end{enumerate}
\end{Lemma}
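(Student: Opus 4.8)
The plan is to analyse the permutation module $\kk E$ as $\kk\uparrow_1^E$ and to understand $S^d(\kk E)$ by decomposing it as a direct sum of transitive permutation modules $\kk\uparrow_{E'}^E$ for various subgroups $E'$, then to quotient out the relatively projective summands. Since $\kk E$ has a basis permuted by $E$ (the group elements $\alpha\in E$), a basis of $T^d(\kk E)$ is permuted by $E$, and $S^d(\kk E)$ is the permutation module on the set of multisets of size $d$ from $E$; equivalently $S^d(\kk E)$ has $\kk$-basis the monomials of degree $d$ in the variables $\{t_\alpha:\alpha\in E\}$, with $E$ acting by translation of indices. First I would decompose this basis into $E$-orbits: the orbit of a monomial $\prod t_\alpha^{c_\alpha}$ is determined by the stabiliser, which is the subgroup $E' = \{\beta\in E: c_{\alpha+\beta}=c_\alpha\ \forall\alpha\}$, so the orbit contributes a summand $\cong\kk\uparrow_{E'}^E$. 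Thus $S^d(\kk E)\cong\bigoplus_{E'\leq E} m_{E',d}\,\kk\uparrow_{E'}^E$ where $m_{E',d}$ counts the orbits with stabiliser exactly $E'$.

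Next I would identify, for each $E'$, that $\kk\uparrow_{E'}^E$ is projective relative to the single subgroup $E'$, hence relative to $\mathcal{X}_k$ as soon as $|E'|\leq p^k$. The monomials with full stabiliser $E'=E$ are precisely the constant-exponent monomials $\prod_\alpha t_\alpha^{c}$, which exist only when $q\mid d$, i.e.\ $k=n$, and then there is exactly one, giving the trivial summand $\kk$. A monomial has stabiliser containing a subgroup of order $p^{j}$ iff its exponent function is constant on cosets of that subgroup, and then the total degree $d$ must be divisible by $p^{j}$; so every summand $\kk\uparrow_{E'}^E$ appearing has $|E'|\leq p^k$ where $p^k\parallel d$. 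This immediately yields (i): if $k<n$ then every summand is induced from a proper subgroup of order $\leq p^k$, so $S^d(\kk E)\simeq_{\mathcal{X}_k}0$. And it yields (ii): if $k=n$ then the only summand \emph{not} of the form $\kk\uparrow_{E'}^E$ with $|E'|\leq p^{n-1}$ is the single trivial summand $\kk$ coming from $E'=E$, so $S^d(\kk E)\simeq_{\mathcal{X}_{n-1}}\kk$.

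For (iii) I would count more precisely the summands $\kk\uparrow_{E'}^E$ with $|E'|=p^k$ modulo those induced from strictly smaller subgroups: writing $d=rp^k$, a monomial with stabiliser \emph{containing} a fixed $E'$ of order $p^k$ corresponds to a monomial of degree $r$ in the $p^{n-k}$ variables indexed by the cosets $E/E'$ (pull back along the quotient), and there are $\binom{p^{n-k}+r-1}{r}$ such monomials; the number of \emph{orbits} of these under the residual action of $E/E'$ (of order $p^{n-k}$), counting only the free orbits (those with stabiliser exactly $E'$ and not larger), is $\frac{1}{p^{n-k}}\binom{p^{n-k}+r-1}{r}$ up to monomials with strictly larger stabiliser, which are accounted for modulo $\mathcal{X}_{k-1}$. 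Since each such free $E/E'$-orbit gives one copy of $\kk\uparrow_{E'}^E$, and since summands $\kk\uparrow_{E''}^E$ with $|E''|<p^k$ are killed in $_{\mathcal{X}_{k-1}}\stmod$, we obtain the stated isomorphism after summing over the subgroups $E'<E$ of order $p^k$.

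The main obstacle will be the bookkeeping in (iii): making the count $\frac{1}{p^{n-k}}\binom{p^{n-k}+r-1}{r}$ rigorous requires care because not all $E/E'$-orbits on degree-$r$ monomials in $p^{n-k}$ variables are free, and the non-free ones (monomials fixed by a larger subgroup) must be separated out and shown to contribute only summands induced from subgroups of order $<p^k$, hence zero in $_{\mathcal{X}_{k-1}}\stmod_{\kk E}$; one must also check that the non-integer-looking fraction is in fact an integer (a congruence $\binom{p^{n-k}+r-1}{r}\equiv 0\ (p^{n-k})$ when the orbit count is taken correctly, or else absorb the discrepancy into the $\mathcal{X}_{k-1}$-projective part). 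I would handle this by an inclusion–exclusion over the lattice of subgroups between $E'$ and $E$, or equivalently by induction on $n-k$, reducing each step to the already-proved cases (i) and (ii) applied to the quotient group $E/E''$.
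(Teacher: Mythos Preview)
Your approach is essentially identical to the paper's: decompose $S^d(\kk E)$ as a permutation module on degree-$d$ monomials, identify each orbit with $\kk\uparrow_{E'}^E$ where $E'$ is the stabiliser, observe that $|E'|$ must divide $d$, and for (iii) count monomials fixed by a given $E'$ of order $p^k$ via the generating function $\frac{1}{(1-t^{p^k})^{p^{n-k}}}$.

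The ``main obstacle'' you identify, however, does not exist, and your proposed resolution of it is in the wrong direction. Recall that $k$ is by hypothesis the \emph{maximal} exponent with $p^k\mid d$. Hence $p^{k+1}\nmid d$, and since any stabiliser of a monomial must have order dividing $d$, no monomial can have stabiliser of order strictly larger than $p^k$. Consequently, for each $E'$ of order $p^k$, every monomial fixed by $E'$ has stabiliser \emph{exactly} $E'$; the $E/E'$-orbits on the $\binom{p^{n-k}+r-1}{r}$ such monomials are all free, the quotient $\frac{1}{p^{n-k}}\binom{p^{n-k}+r-1}{r}$ is automatically an integer, and no inclusion--exclusion or induction is needed. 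This is precisely how the paper argues. Your suggestion that monomials with larger stabiliser would ``contribute only summands induced from subgroups of order $<p^k$'' is also backwards: a monomial with stabiliser $E''\supsetneq E'$ contributes $\kk\uparrow_{E''}^E$ with $|E''|>p^k$, which is \emph{not} projective relative to $\mathcal{X}_{k-1}$ and would not vanish in $_{\mathcal{X}_{k-1}}\stmod_{\kk E}$. Fortunately the maximality of $k$ rules out such $E''$ altogether.
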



\begin{proof}Let  W be a direct summand of $S^d(\kk E)$. Then $W$ is a permutation module; let $\{\sigma \cdot m: \sigma \in E\}$ be a basis of $W$, where $m$ is some monomial of degree $d$. Then $W$ has isomorphism type $\kk \uparrow_{E'}^E$ where $E'$ is the stabiliser of $m$.
 Clearly if the monomial $m$ has stabiliser $E'$ then $m$ can be written as a product of monomials of the form $\prod_{\sigma \in E'}(\sigma m')$. In particular, we must have that $|E'|$ divides $\deg(m)$.  This establishes (i). On the other hand, if $d=rp^n$ then there is a unique monomial with stabiliser $E$, namely $\prod_{\sigma \in E} x_{\sigma}^r$. This establishes (ii). 

Now let $E'$ be a  subgroup of $E$ with order $p^{k}$. Define a power series $P(E',t) = \sum_{d \geq 0} M^{E'}_d t^d$ where $M^{E'}_d$ is the number of monomials of degree $d$ fixed by $E'$. Then we have 
$$P(E',t) = \frac{1}{(1-t^{p^{k}})^{p^{n-k}}}$$
$$ = \sum_{r=0}^{\infty} \begin{pmatrix} p^{n-k}+r-1 \\ r \end{pmatrix} t^{rp^{k}}$$
by the generalised binomial theorem. Therefore the number of summands of $S^d(\kk E)$ with isomorphism type $\kk \uparrow_{E'}^E$ is $\frac{1}{p^{n-k}}\begin{pmatrix} p^{n-k}+r-1 \\ r \end{pmatrix}$, as each one spans a submodule of dimension $p^{n-k}$. As there are no trivial summands and all other summands are induced from smaller subgroups, we have proved (iii). 
\end{proof}

\subsection{Main results}
In order to make the main results more readable we introduce some more notation: we write $B_{d,m}$ for $B^d(S^m(V)^*)$, and $\alpha_{d,m}$ for the map $S^d(S^m(V)^*) \rightarrow S^{d+1}(S^m(V)^*)$ described in section \ref{sec:periodicty}.  By the remarks following equation \ref{basisvstar} we obtain, for any $d$ and $m<q$ an exact sequence
\begin{equation}\label{exactsequence}
 0 \longrightarrow B_{d,m} \stackrel{\alpha_{d,m}}{\longrightarrow} B_{d+1,m} \longrightarrow B_{d+1,m-1} \longrightarrow 0.
\end{equation}
When $E'<E$ is a proper subgroup we will write $B_{d,m}(E')$ for $B^d(S^m(V \downarrow_{E'})^*)$. Note that this is not the same thing as $B_{d,m} \downarrow_{E'}$; the the former consists of polynomials whose degree as a polynomial in the terminal variable of $S^m(V)^*$ is $<|E'|$ while the latter consists of polynomials whose degree as a polynomial in the terminal variable of $S^m(V)^*$ is $<q$. 

\begin{Prop}\label{special}
Let $d,m$ be a pair of positive integers with $m<q$ and $m+d \geq q$. Then the following hold:
\begin{enumerate}
\item[(i)] $B_{d,m}$ is projective relative to the set of proper subgroups of $E$. 
\item[(ii)] Assuming $n \geq 2$, let $s$ and $r$ be the quotients when $d$ and $m$ respectively are divided by $p^{n-1}$, with $d'$ and $m'$ the corresponding remainders. Then we have
\begin{equation}\label{propformula} B_{d,m} \simeq_{\mathcal{X}_{n-2}} \bigoplus_{E'<E: |E'|=p^{n-1}} \frac{1}{p}\left[ \begin{pmatrix} r+s \\ r \end{pmatrix} - \begin{pmatrix} r+s-p \\ r \end{pmatrix}\right] B_{d',m'}(E') \uparrow_{E'}^E \end{equation} provided $\frac{1}{p}\left[ \begin{pmatrix} r+s \\ r \end{pmatrix} - \begin{pmatrix} r+s-p \\ r \end{pmatrix}\right]$ is an integer, and $$B_{d,m}(E) \simeq_{\mathcal{X}_{n-2}} 0$$ otherwise.
\end{enumerate}
\end{Prop}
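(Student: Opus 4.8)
The plan is to prove both parts together by induction on $m$, using the short exact sequence (\ref{exactsequence}) together with the structure results for symmetric powers of the projective module $\kk E$. For the base case $m=q-1$, we have $S^{q-1}(V)^* \cong \kk E$ by Proposition \ref{prop:smvstar}(i), so $B_{d,q-1}$ is a direct summand of $S^d(\kk E)$ — specifically, since $B$ consists of polynomials whose $x_0$-degree is $<q$, and $N_E(x_0) = T_E(x_0)$ has degree $q$, the decomposition (\ref{decomp}) identifies $B_{d,q-1}$ with the appropriate truncation. For $d \geq 1$ we have $d+m \geq q$ automatically, so we must show $B_{d,q-1}$ is relatively projective with respect to proper subgroups and compute it modulo $\mathcal{X}_{n-2}$; this is exactly the content of Lemma \ref{lemma:powersofproj} once we match up the binomial coefficient from part (iii) of that lemma (with $k=n-1$, so $r$ there is our $s$) against the expression in (\ref{propformula}) when $m'=q-1 \bmod p^{n-1}$, which requires carefully tracking what $B_{d',q-1}(E')$ is — here $|E'|=p^{n-1}$, so for $E'$ the variable $x_0$ has $E'$-degree allowed up to $p^{n-1}$, and $S^{q-1}(V\downarrow_{E'})^*$ decomposes over $\kk E'$ via Proposition \ref{prop:smvstar}(ii) into copies of $\kk E'$ plus a smaller summand.

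For the inductive step, suppose the result holds for $m-1$ (with all admissible $d$) and consider $B_{d,m}$ with $m+d \geq q$. Apply the exact sequence
\[ 0 \longrightarrow B_{d-1,m} \stackrel{\alpha_{d-1,m}}{\longrightarrow} B_{d,m} \longrightarrow B_{d,m-1} \longrightarrow 0. \]
The right-hand term $B_{d,m-1}$ satisfies $d+(m-1) \geq q-1$; if $d+m-1 \geq q$ the inductive hypothesis applies directly, and the boundary case $d+m-1 = q-1$, i.e. $d+m=q$, must be handled separately (here $m+d \geq q$ forces equality and $B_{d,m-1} = B_{q-m,m-1}$ has $x_0$-degree constraints making it small — in fact in this regime the claim is that $B_{d,m}$ itself becomes relatively projective for a new reason, coming from $N_E$-divisibility). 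The key observation is that any relatively $\mathcal{X}$-split short exact sequence with two relatively $\mathcal{X}$-projective terms has its third term relatively $\mathcal{X}$-projective, and more precisely, in the relative stable category the sequence becomes a triangle, so $B_{d,m}$ is determined up to $\simeq_{\mathcal{X}_{n-2}}$ by $B_{d-1,m}$ and $B_{d,m-1}$ provided the connecting maps vanish in that category. One then chases the formula (\ref{propformula}): the binomial identity $\binom{r+s}{r} = \binom{r+s-1}{r} + \binom{r+s-1}{r-1}$ (applied to whichever of $d,m$ crosses a multiple of $p^{n-1}$) should match the additivity coming from the exact sequence, after accounting for the shift in remainders $d', m'$.

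For part (i) alone — projectivity relative to \emph{all} proper subgroups of $E$ — one can argue more cheaply: it suffices to show $B_{d,m}$ restricted to $E$ and then induced back up splits off $B_{d,m}$, or equivalently by Higman's criterion (Proposition \ref{higman}(vii)) that $\Id$ is a sum of transfers from proper subgroups. Since $S^m(V)^*$ for $m<q$ is indecomposable but not projective (its $E$-fixed space is one-dimensional, and it is not $\kk E$), and since $m+d \geq q$ means the $x_0$-degree in $S^d(S^m(V)^*)$ can reach $q$ only after multiplying by $N_E$, the module $B_{d,m}$ avoids the "$N_E$-region"; the real input is that $B_{d,m}$ contains no trivial summand and no projective summand in a controlled way — this follows from the Hilbert series computation in the style of Proposition \ref{prop:smvstar} together with Lemma \ref{lemma:wheresthefixedpoint}, which locates fixed points.

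The main obstacle I expect is the bookkeeping in part (ii): correctly defining and tracking $B_{d',m'}(E')$ through the induction, and verifying that the multiplicities produced by iterating the exact sequence (\ref{exactsequence}) telescope into the closed binomial expression $\frac{1}{p}\left[\binom{r+s}{r} - \binom{r+s-p}{r}\right]$. In particular, the "provided this is an integer, and otherwise $\simeq 0$" dichotomy suggests that the subtlety is a $p$-divisibility phenomenon in the multiplicity — one must show that when the bracketed quantity is not divisible by $p$, the relevant summand is forced to be genuinely projective (hence trivial modulo $\mathcal{X}_{n-2}$) rather than induced from a subgroup of order exactly $p^{n-1}$, which is where Corollary \ref{cor:frob} and a restriction argument to each $E' < E$ of order $p^{n-1}$ will be needed to patch the local computations together.
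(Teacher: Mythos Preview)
Your proposal has the right ingredients but gets the induction architecture wrong, and this is not merely bookkeeping.

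First, the direction of your induction on $m$ is inconsistent: you declare the base case to be $m=q-1$ (the largest value), but then in the inductive step you ``suppose the result holds for $m-1$'' and try to deduce it for $m$. If the base is $m=q-1$, the induction must run \emph{downward}: assume the claim for all pairs $(e,m)$ and use the exact sequence
\[
0 \longrightarrow B_{d,m} \stackrel{\alpha_{d,m}}{\longrightarrow} B_{d+1,m} \longrightarrow B_{d+1,m-1} \longrightarrow 0
\]
to compute $B_{d+1,m-1}$ as a \emph{cokernel}. Your sequence places $B_{d,m}$ in the middle, which is useless for determining it.

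Second, and more seriously, you are missing an outer induction on $n=\rank(E)$. The objects $B_{d',m'}(E')$ appearing on the right-hand side of (\ref{propformula}) are defined for proper subgroups $E'<E$; to know anything about them (and to know that formula (\ref{propformula}) even makes sense at the level of maps) you need the proposition already established for groups of smaller rank. The paper's proof is a double induction: on $n$ first, with the $n=1$ case reducing to Theorem~\ref{thm:cyclicanalog}, and then downward on $m$ for each fixed $n$.

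Third, the crux of the inductive step is \emph{not} that ``$B_{d,m}$ is determined by $B_{d-1,m}$ and $B_{d,m-1}$'' in the stable category. Knowing the two outer terms of a triangle does not determine the third: there can be many inequivalent extensions. What you must do is identify the \emph{morphism} $\alpha_{d,m}$ itself in $_{\mathcal{X}_{n-2}}\stmod_{\kk E}$, and then invoke Lemma~\ref{lemma:cokernels} to conclude that its cokernel is determined up to $\simeq_{\mathcal{X}_{n-2}}$. The way the paper does this is the step you mention only in your final sentence: since by induction $B_{d,m}$ is projective relative to $\mathcal{X}_{n-1}$, Corollary~\ref{cor:frob} with $\mathcal{X}=\mathcal{X}_{n-1}$ and $\mathcal{Y}=\mathcal{X}_{n-2}$ says that $\alpha_{d,m}$ is pinned down by its restrictions to each maximal $E'<E$. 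Those restrictions are then analysed using the decomposition $S^m(V)^*\!\downarrow_{E'}\cong S^{m'}(V)^*\oplus r\,\kk E'$ from Proposition~\ref{prop:smvstar}, Lemma~\ref{lemma:wheresthefixedpoint} to locate the fixed point that $\alpha_{d,m}$ multiplies by, and the inductive hypothesis on $n$ applied to $E'$. This is where the combinatorics of the binomial coefficients actually enters, via a counting argument for summands after restriction, not via a Pascal-type recursion on $m$ as you suggest.

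Finally, your ``cheaper'' argument for part~(i) is not an argument: the observation that $B_{d,m}$ ``avoids the $N_E$-region'' is just the definition of $B_{d,m}$ and has no bearing on relative projectivity. Part~(i) falls out only as a consequence of the full inductive argument establishing~(ii).
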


\begin{rem} The proof is by double induction. The first induction is on $n$, the rank of $E$. We will show that the (i) above holds when $n=1$. Then for the inductive step we will take a group $E$ of order $p^n$ and assume that both (i) and (ii) hold for all proper subgroups of $E$ - although in the $n=2$ case just (i) will be sufficient. We will then prove that  (ii) holds for $E$. Notice that this implies immediately that (i) holds for $E$.

For each fixed $n$, we will prove (ii) by backwards induction on $m$, starting at $m=q-1$. This means we will initially prove that (ii) holds for all pairs $(d,q-1)$. Then in the inductive step we will fix $m \leq q-1$ and assume that (ii) holds for all pairs $(d,m)$ such that $m+d \geq q$. We will prove (ii) holds for the pairs $(m-1,d+1)$. This part of the proof is the longest and relies on determining the equivalence class of the morphism $\alpha_{d,m}: B_{d,m} \rightarrow B_{d+1,m}$. 

In various parts the proof splits into two or more subcases, depending on the value of $d$ or $m$ modulo $q':= p^{n-1}$. This will be made clear in the text.  
\end{rem}

\begin{proof} {\bf Initial step, $n=1$.}\\ For the $n=1$ case, only the first statement needs to be checked. This states that $B_{d,m}$ is projective provided $m+d \geq p$ and $m<p$. When $n=1$, $E$ is a cyclic group and the proposition reduces to Theorem \ref{thm:cyclicanalog}; more precisely, to (i) when $d<p$ and to (ii) when $d \geq p$.

\noindent {\bf Inductive step for $n$.}\\
Now fix $n>1$, and assume that the proposition is true for all proper subgroups of $E$. The proof for each $n$ is by downward induction on $m$, starting at $q-1$. 

	{\bf Initial step: $m=q-1$.}\\
When $m=q-1$ we have $r=p-1$ and $m'= q'-1$. There are two cases to consider. 

{\bf Case 1: $d' \neq 0$, i.e. $d$ not divisible by $q'$.}\\
Since $m' =q'-1$, we have $m'+d' \geq q'$. Therefore, for every subgroup $E'\leq E$ with order $q'$, $B_{d',m'}(E')$ is projective relative to $\mathcal{X}_{n-2}$ by the inductive hypothesis on $n$. Now the proposition becomes simply `` $B_{d,q-1}$ is projective relative to $\mathcal{X}_{n-2}$''. We showed that, when $d$ is not divisible by $q'$, $S^d(\kk E)$ is projective relative to $\mathcal{X}_{n-2}$ in Lemma \ref{lemma:powersofproj}(iii). As $B_{d,q-1}$ is a direct summand of $S^d(S^{q-1}(V)^*)$ we get $B_{d,m} \simeq_{\mathcal{X}_{n-2}} 0$ as required.

{\bf Case 2: d' =0, i.e. $d$ is divisible by $q'$.}\\
In this case, $B_{d',m'}(E') = S^0(S^{q'-1}(V \downarrow_{E'})^*) = \kk$. Furthermore since $d \geq p^{n-1}$ we have $s \geq 1$, hence $r+s \geq p$. So by Lemma \ref{numbertheory} \[ \begin{pmatrix} r+s \\ r \end{pmatrix} \equiv \begin{pmatrix} r+s-p \\ r \end{pmatrix} \mod p\]
and we have to show that,
$$B_{d,q-1} \simeq_{\mathcal{X}_{n-2}} \bigoplus_{E'<E: |E|=p^{n-1}} \frac{1}{p}\left[ \begin{pmatrix} r+s \\ r \end{pmatrix} - \begin{pmatrix} r+s-p \\ r \end{pmatrix}\right] \kk \uparrow_{E'}^E.$$
Now by Lemma \ref{lemma:powersofproj}(iii), we have
\[S^d(S^{q-1}(V)^*) \simeq_{\mathcal{X}_{n-2}} \bigoplus_{E'<E: |E|=p^{n-1}} \frac{1}{p} \begin{pmatrix} r+s \\ r \end{pmatrix}  \kk \uparrow_{E'}^E\]
and $d-q=  (s-p)p^{n-1}$ so \[S^{d-q}(S^{q-1}(V)^*) \simeq_{\mathcal{X}_{n-2}}  \bigoplus_{E'<E: |E|=p^{n-1}}  \frac{1}{p} \begin{pmatrix} r+s-p \\ r \end{pmatrix} \kk \uparrow_{E'}^E\]
from which the result follows. This concludes the proof for $m=q-1$, and starts the induction on $m$.


{\bf Inductive step for $m$:}\\
 Now fix $m \leq q-1$ and assume that the proposition is true for all pairs of the form $(d,m)$ such that and $m+d \geq q$. We must determine the equivalence class modulo $\mathcal{X}_{n-2}$ of $B_{d+1,m-1}$.

 We have $B_{d+1,m-1} = \coker(\alpha_{d,m}|_{B_{d,m}})$. We use $\alpha_{d,m}$ for this map when the context is clear. We want to determine $B_{d+1,m-1}$ up to the addition of $\kk E$-modules which are projective relative to $\mathcal{X}_{n-2}$; by Lemma \ref{lemma:cokernels} it is enough to compute $\coker(\alpha)$ where $\alpha \equiv_{\mathcal{X}_{n-2}} \alpha_{d,m}$.

Let the definitions of $d',m',s,r$ be as in the statement of the proposition. We have several cases to consider. 

{\bf Case 1: $d'+m' \geq q'$.}\\ In this case $B_{d',m'}(E')$ is projective relative to $\mathcal{X}_{n-2}$ by the inductive hypothesis on $n$, and by the inductive hypothesis on $m$, $B_{d,m}$ is too. This is the domain of $\alpha_{d,m}$, which is injective, so we must have $\alpha_{d,m} \equiv_{\mathcal{X}_{n-2}} 0$. It follows that 
\begin{equation}\label{equal} B_{d+1,m-1} \simeq_{\mathcal{X}_{n-2}} B_{d+1,m}.\end{equation}
  
This case now splits into subcases. Let us denote by $(d+1)'$ the remainder when $d+1$ is divided by $q'$.

{\bf Subcase 1a: $d' \neq q'-1$.}\\
 In this case, $(d+1)'=d'+1$, and since $m'+d'+1 \geq m'+d' \geq q'$ we get $B_{d'+1,m'}(E') \simeq_{\mathcal{X}_{n-2}} 0$ by the inductive hypothesis (i) on $n$. Now by the inductive hypothesis (ii) on $m$ we get that  $$B_{d+1,m} \simeq_{\mathcal{X}_{n-2}} 0.$$ So by (\ref{equal}) we get 
$$B_{d+1,m-1} \simeq_{\mathcal{X}_{n-2}} 0$$ too.

This is exactly what the proposition claims in this subcase. Note that $m'$ cannot be zero, since the assumption $m'+d' \geq q'$ then cannot hold. Therefore $(m-1)' = (m'-1)$, and since $(d+1)'+(m-1)' = d'+m' \geq q'$ we get $B_{d'+1,m'-1}(E') \simeq_{\mathcal{X}_{n-2}}$ by the inductive hypothesis on $n$. Then the proposition (ii) states that   $$B_{d+1,m-1}(E') \simeq_{\mathcal{X}_{n-2}}$$ which is what we have just shown. 

{\bf Subcase 1b: $d' = q'-1$.}\\
If $d' = q'-1$ then we have $(d+1)'=0$. Further, the quotient when $d+1$ is divided by $q'$ is then $s+1$. The assumption $m'+d' \geq q'$ rules out the possibility that $m' = 0$, so $(m-1)'=m'-1$ and the quotient when $m-1$ is divided by $q'$ is still $r$. So the difference of binomial coefficients appearing in the formula (\ref{propformula}) for $B_{d+1,m}$ is the same as the one appearing in the formula for $B_{d+1,m-1}$ and since $$B_{(d+1)',(m-1)'}(E') = B_{0,m'-1}(E') = \kk = B_{0,m'}(E') = B_{(d+1)',m'}$$ we get the desired equality. This ends the proof for case 1.

{\bf Case 2:  $m'+d<q'$.}\\
In this case, since $m+d \geq q$ we have
$$q \leq m+d = (r+s)p^{n-1}+m'+d' $$
$$\Rightarrow (r+s)p^{n-1} \geq q-m'-d' > p^n-p^{n-1} = p^{n-1}(p-1)$$
and therefore $r+s \geq p$. Since $m<q$ we get $r<p$ and by Lemma \ref{numbertheory}
\[\begin{pmatrix} r+s \\ r \end{pmatrix} \equiv \begin{pmatrix} r+s-p \\ r\end{pmatrix} \mod p.\]
Therefore by the inductive hypothesis on $m$, $$B_{d,m} \simeq_{\mathcal{X}_{n-2}} \bigoplus_{E'<E: |E'|=p^{n-1}} \frac{1}{p}\left[ \begin{pmatrix} r+s \\ r \end{pmatrix} - \begin{pmatrix} r+s-p \\ r \end{pmatrix}\right] B_{d',m'}(E') \uparrow_{E'}^E. $$ Note that this is the domain of $\alpha_{d,m}$.

{\bf Claim:} \begin{equation}\label{alpha}  \alpha_{d,m} \equiv_{\mathcal{X}_{n-2}} \alpha:= \bigoplus_{E'<E: |E'|=p^{n-1}} \frac1p \left[ \begin{pmatrix} r+s \\ r \end{pmatrix} - \begin{pmatrix} r+s-p \\ r \end{pmatrix}\right]\alpha_{d',m'}(E')\uparrow_{E'}^E.\end{equation}

{\it Proof of claim:} The inductive hypothesis on $m$ implies that $B_{d,m}$ is projective relative to $\mathcal{X}_{n-1}$. Applying Corollary \ref{cor:frob} with $\mathcal{X}= \mathcal{X}_{n-1}$ and $\mathcal{Y} = \mathcal{X}_{n-2}$ shows that it is enough to check that the formula (\ref{alpha}) is correct on restriction to each $E' < E$ with $|E'|=p^{n-1}$. Now the Mackey formula implies that for any $\kk E''$-module $W$ we have $$W \uparrow_{E''}^E \downarrow_{E'} \simeq_{\mathcal{X}_{n-2}} \left\{\begin{array}{lr} 0 & E' \neq E'' \\ pW & E'=E''.   \end{array}\right.$$ 
It follows that for any $E'<E$ with $|E'|=p^{n-1}$ we have $$\alpha \downarrow_{E'} \simeq_{\mathcal{X}_{n-2}}  \left[ \begin{pmatrix} r+s \\ r \end{pmatrix} - \begin{pmatrix} r+s-p \\ r \end{pmatrix}\right]\alpha_{d',m'}(E').$$

On the other hand, as $\kk E'$-modules $S^m(V)^* \cong S^{m'}(V)^* \oplus rS^{q'-1}(V)^*$ where $q' = p^{n-1}$. Following Proposition \ref{prop:smvstar}(ii), we take $\{x_0,x_{q'},\ldots,x_{(r-1)q'}\}$ as the $\kk E'$-module generators of the projective summand, and write $$S^m(V)^* = \bigoplus_{i=0}^{r-1} (E' \cdot x_{iq'}) \oplus M$$ where $M$ is the direct summand of $S^m(V)^*$ isomorphic to $S^{m'}(V)^*$.
The map $\alpha_{d,m} \downarrow_{E'}$ is induced by multiplication by an element $x$ of the fixed-point space of $M$, such that the quotient of $M$ by $x$ is isomorphic to $S^{m'-1}(V)^*$ if $m' \neq 0$ and the zero module otherwise. 

As $\kk E'$-modules we have
\[S^d(S^m(V)^*) = \bigoplus_{i_1+i_2+\ldots+i_r+j = q's+d'} S^{i_1}(E' \cdot x_0) \otimes \ldots \otimes S^{i_r}(E' \cdot x_{(r-1)q'}) \otimes S^j(M). \] 
 and 
\[B^d(S^m(V)^*) = \bigoplus_{i_1+i_2+\ldots+i_r+j = q's+d'} S^{i_1}(E' \cdot x_0)_{\{<q\}} \otimes \ldots \otimes S^{i_r}(E' \cdot x_{(r-1)q'}) \otimes S^j(M) \] 
where $S^{i_1}(E \cdot x_0)_{\{<q\}}$ means polynomials of degree $i_1$ in the linear expressions $\{\sigma \cdot x_0: \sigma \in E'\}$ whose degree as a polynomial in $x_0$ is $<q$.
Since $\alpha_{d,m} \downarrow_{E'}$ is injective, we can ignore any modules in the decomposition of its domain which are projective relative to $\mathcal{X}_{n-2}$. Note that $S^{*} (E' \cdot x_{kq'}) \simeq_{\mathcal{X}_{n-2}} \kk[N_{E'}(x_{kq'})]$ by the proof of Lemma \ref{lemma:powersofproj}(ii). As $N_{E'}(x_0)$ has $x_0$-degree $q'$ we have $$S^{*} (E' \cdot x_{0})_{\{<q\}} \simeq_{\mathcal{X}_{n-2}} \oplus_{i=0}^{p-1} \langle N_{E'}(x^i_{0}) \rangle.$$

 So the domain $B_{d,m}(E) \downarrow_{E'}$ is equivalent to a summand of 
\[\bigoplus_{j=0}^s \bigoplus_{\stackrel{i_1+i_2+\ldots+i_r=s-j}{i_1<p}} N_{E'}(x^{i_1}_0) \otimes \ldots \otimes N_{E'}(x^{i_r}_{(r-1)q'}) \otimes S^{jq'+d'}(M).    \]
 Further, $S^{jq'+d'}(M) \cong S^{jq'+d'}(S^{m'}(V \downarrow_{E'})^*) \simeq_{\mathcal{X}_{n-2}} S^{d'}(S^{m'}(V)^*) = B_{d',m'}(E')$ by the inductive hypothesis on $n$, for all values of $j$. Therefore forgetting the grading we have
\begin{equation}\label{bdm} B_{d,m}(E) \downarrow_{E'} \simeq_{\mathcal{X}_{n-2}} \bigoplus_{j=0}^s \bigoplus_{\stackrel{i_1+i_2+\ldots+i_r=s-j}{i_1<p}} \kk \otimes \kk \otimes \ldots \otimes \kk \otimes B_{d',m'}(E') \end{equation} and $\alpha_{d,m}\downarrow_{E'}$ on this is equivalent to 
\[\bigoplus_{j=0}^s \bigoplus_{\stackrel{i_1+i_2+\ldots i_r=s-j}{i_1<p}} \left(\id_{\kk} \otimes \id_{\kk} \ldots \otimes \id_{\kk} \otimes \ \alpha_{d',m'}(E') \right) \]

Obviously all the summands appearing in (\ref{bdm}) are isomorphic. The number of them is $$\sum_{j=0}^s \left( \ \text{Number of ways of writing $s-j$ as an ordered sum of $r$ non-negative integers, 
with the first $<p$. }   \right).$$
Let $\mathcal{P}(k,l)$ denote the number of ways of writing $k$ as an ordered sum of $l$ non-negative integers (where the order of summands is taken into account). An easy combinatorial argument shows that $\sum_{k=0}^l \mathcal{P}(k,l) = \begin{pmatrix} k+l \\ k \end{pmatrix}$. Evidently the number of summands appearing in  (\ref{bdm})  is $$\sum_{j=0}^s (\mathcal{P}(s-j,r) - \mathcal{P}(s-j-p,r)).$$ But clearly $\mathcal{P}(s-j-p,r) = 0$ if $j>s-p$, so the number of summands is \[ \sum_{j=0}^s \mathcal{P}(s-j,r) - \sum_{j=0}^{s-p}\mathcal{P}(s-j-p,r) = \begin{pmatrix} r+s \\r \end{pmatrix} - \begin{pmatrix} r+s-p \\ r \end{pmatrix}.\]

Now we have shown that 
\[\alpha_{d,m} \downarrow_{E'} \simeq_{\mathcal{X}_{n-2}} \left[ \begin{pmatrix} r+s \\ r \end{pmatrix} - \begin{pmatrix} r+s-p \\ r \end{pmatrix}\right]\alpha_{d',m'}(E') \equiv_{\mathcal{X}_{n-2}} \alpha\downarrow_{E'}\] and hence $\alpha_{d,m} \equiv_{\mathcal{X}_{n-2}} \alpha$ which proves our claim.\\

Now we must show that the cokernel of $\alpha_{d,m}$ is given by the formula  for $B_{d+1,m-1}$ in part (ii) of the proposition. Note that $\alpha_{d,m}$ has codomain $B_{d+1,m}$. There are three cases to check.

{\bf Subcase 2a: $m' \neq 0$:} Note that in this case it is not possible to have $d' = q'-1$, since we must have $m'+d'<q'$. Therefore the quotients when $m$ and $d+1$ are divided by $q'$ are $r$ and $s$ respectively, and by the inductive hypothesis on $m$ we have
$$B_{d+1,m} \simeq_{\mathcal{X}_{n-2}}  \bigoplus_{E'<E: |E'|=p^{n-1}} \frac{1}{p}\left[ \begin{pmatrix} r+s \\ r \end{pmatrix} - \begin{pmatrix} r+s-p \\ r \end{pmatrix}\right] B_{d'+1,m'}(E') \uparrow_{E'}^E $$ \begin{equation}\label{codom} = \bigoplus_{E'<E: |E'|=p^{n-1}} \frac{1}{p}\left[ \begin{pmatrix} r+s \\ r \end{pmatrix} - \begin{pmatrix} r+s-p \\ r \end{pmatrix}\right] \codom(\alpha_{d',m'}(E')) \uparrow_{E'}^E \end{equation}

and hence, using the formula (\ref{alpha}) for $\alpha$ we have $$B_{d+1,m-1} = \coker(\alpha_{d,m}) \simeq_{\mathcal{X}_{n-2}}  \bigoplus_{E'<E: |E'|=p^{n-1}} \frac{1}{p}\left[ \begin{pmatrix} r+s \\ r \end{pmatrix} - \begin{pmatrix} r+s-p \\ r \end{pmatrix}\right] \coker(\alpha_{d',m'}(E')) \uparrow_{E'}^E$$
$$=  \bigoplus_{E'<E: |E'|=p^{n-1}} \frac{1}{p}\left[ \begin{pmatrix} r+s \\ r \end{pmatrix} - \begin{pmatrix} r+s-p \\ r \end{pmatrix}\right] B_{d'+1,m'-1}(E') \uparrow_{E'}^E$$ as required.

{\bf Subcase 2b: $m' = 0, d' \neq q'-1$}
On the other hand, if $m'=0$ then $\alpha_{d',m'}: \kk \rightarrow \kk$ is an isomorphism in $_{\mathcal{X}_{n-2}}\stmod_{\kk E'}$ for every subgroup $E'<E$ with order $q'$. If in addition $d' \neq q'-1$ then as the quotient when $d+1$ is divided by $q'$ is still $s$, (\ref{codom}) still holds and $$\coker{\alpha_{d,m}} \simeq_{\mathcal{X}_{n-2}}  \bigoplus_{E'<E: |E'|=p^{n-1}} \frac{1}{p}\left[ \begin{pmatrix} r+s \\ r \end{pmatrix} - \begin{pmatrix} r+s-p \\ r \end{pmatrix}\right] \coker(\alpha_{d',m'}(E')) \uparrow_{E'}^E$$ $$ \simeq_{\mathcal{X}_{n-2}} 0.$$
This is in agreement with (\ref{propformula}) because $(m-1)'+(d+1)' =q'-1+d'+1 \geq q'$, hence by the inductive hypothesis on $n$ $B_{(d+1)',(m-1)'} \simeq_{\mathcal{X}_{n-2}} 0$. 

{\bf Subcase 2c: $m'=0, d'=q'-1$.}\\
 In this case, the quotient when $d+1$ is divided by $q'$ is not $s$ but $s+1$. We still have $r<p$ and $r+s+1 \geq p$, so by Lemma \ref{numbertheory}  $\left[ \begin{pmatrix} r+s+1 \\ r \end{pmatrix} - \begin{pmatrix} r+s+1-p \\ r \end{pmatrix}\right] $ is divisible by $p$. Therefore by the inductive hypothesis on $m$ we have $$B_{d+1,m} \simeq _{\mathcal{X}_{n-2}}  \bigoplus_{E'<E: |E'|=p^{n-1}} \frac{1}{p}\left[ \begin{pmatrix} r+s+1 \\ r \end{pmatrix} - \begin{pmatrix} r+s+1-p \\ r \end{pmatrix}\right] \kk \uparrow_{E'}^E.$$
The map $\alpha_{d',m'}(E')$ is again an isomorphism for each $E'<E$ with order $q'$, and so the image of $\alpha$ is contained in  $$\bigoplus_{E'<E: |E'|=p^{n-1}} \frac{1}{p}\left[ \begin{pmatrix} r+s \\ r \end{pmatrix} - \begin{pmatrix} r+s+p \\ r \end{pmatrix}\right] \kk \uparrow_{E'}^E.$$ Therefore the cokernel of $\alpha_{d,m}$ is $$ \simeq_{\mathcal{X}_{n-2}}\bigoplus_{E'<E: |E'|=p^{n-1}} \frac{1}{p}\left[ \begin{pmatrix} r+s \\ r-1 \end{pmatrix} - \begin{pmatrix} r+s+p \\ r-1 \end{pmatrix}\right] \kk \uparrow_{E'}^E.$$ This is what we want, because the quotient when $m-1$ is divided by $q'$ is $r-1$ and the quotient when $d+1$ is divided by $q'$ is $s+1$, so their sum is $r+s$, and $B_{(d+1)',(m-1)'} \simeq_{\mathcal{X}_{n-2}} B_{0,0} \cong \kk$.
\end{proof} 

In the above we used the following number-theoretic lemma:
\begin{Lemma}\label{numbertheory}
Let $r,s$ be integers and let $p$ be a prime. Suppose that $r<p$ and $r+s \geq p$. Then
\[\begin{pmatrix} r+s \\ r \end{pmatrix} \equiv \begin{pmatrix} r+s-p \\ r\end{pmatrix} \mod p\]
where the latter is interpreted as zero if $s<p$.
\end{Lemma}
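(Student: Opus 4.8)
The plan is to read the congruence directly off Lucas' theorem, which has already been invoked in the excerpt (see \cite{FineLucas}). If $r<0$ both binomial coefficients vanish under the usual convention, so one may assume $0\le r<p$; then the base-$p$ expansion of $r$ consists of the single digit $r$ in the units place, all higher digits being $0$. The first step is to record the resulting degenerate form of Lucas: for \emph{every} non-negative integer $N$, writing $N=\sum_i N_ip^i$ in base $p$, one gets $\binom{N}{r}\equiv\binom{N_0}{r}\prod_{i\ge 1}\binom{N_i}{0}=\binom{N_0}{r}\pmod p$, where $N_0\in\{0,1,\dots,p-1\}$ is the least residue of $N$. In words: when $r<p$, the residue of $\binom{N}{r}$ modulo $p$ depends only on $N\bmod p$.

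The second step is to verify that both sides are bona fide binomial coefficients and then apply this. The hypotheses $r<p$ and $r+s\ge p$ force $s>0$, and $r+s\ge p$ gives $r+s-p\ge 0$; so $r+s$ and $r+s-p$ are both non-negative and differ by exactly $p$, hence have the same least residue modulo $p$. Applying the first step with $N=r+s$ and with $N=r+s-p$ yields
\[\binom{r+s}{r}\equiv\binom{(r+s)\bmod p}{r}=\binom{(r+s-p)\bmod p}{r}\equiv\binom{r+s-p}{r}\pmod p,\]
which is precisely the asserted congruence.

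The last step handles the parenthetical clause. When $s<p$ we have $r+s<2p$ (as $r,s<p$) together with $r+s\ge p$, so $r+s-p$ lies in $\{0,\dots,p-1\}$ and is its own residue; moreover $r+s-p<r$ is equivalent to $s<p$, so $\binom{r+s-p}{r}=0$ under the convention $\binom{a}{b}=0$ for $0\le a<b$ --- exactly the ``interpreted as zero'' reading. I do not anticipate any genuine obstacle: the only thing demanding a moment's care is the bookkeeping of edge cases (non-negativity of the arguments, and the vanishing rather than ill-definedness of the smaller coefficient when $s<p$), all of which is taken care of cleanly by the stated hypotheses.
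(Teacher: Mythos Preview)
Your proof via Lucas' theorem is correct. The paper takes a different, more hands-on route: it splits into the two cases $s<p$ and $s\ge p$ and manipulates the factorial expressions directly. When $s<p$, the numerator $(r+s)(r+s-1)\cdots(s+1)$ of $\binom{r+s}{r}$ contains the factor $p$ (since $s+1\le p\le r+s$), so the coefficient vanishes modulo $p$. When $s\ge p$, each factor $r+s-j$ in the numerator is congruent to $r+s-p-j$, and since $r!$ is coprime to $p$ the two binomial coefficients agree modulo $p$.

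Your argument is cleaner and more conceptual: the single observation that for $0\le r<p$ the residue of $\binom{N}{r}$ depends only on $N\bmod p$ handles both cases at once, and Lucas' theorem is already invoked elsewhere in the paper, so you are not importing anything new. The paper's computation has the minor advantage of being entirely self-contained, but at the cost of a case split and a slightly delicate passage from congruence of numerators to congruence of the integer quotients (which is fine because $r!$ is a unit modulo $p$, though the paper leaves this implicit).
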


\begin{proof}
If $s<p$ then since $s+r \geq p$ we have
\[\begin{pmatrix}
   r+s \\ r
  \end{pmatrix} = \frac{(r+s)(r+s-1)\cdots (p) \cdots (s+1)}{r(r-1)\cdots 2.1} \equiv 0 \mod p.
\]
While if $s \geq p$ we have\begin{align*} \begin{pmatrix}
   r+s-p \\ r
  \end{pmatrix} &= \frac{(r+s-p)(r+s-1-p)\cdots (s+1-p)}{r(r-1)\cdots 2\cdot 1} \\ &=  \frac{(r+s)(r+s-1) \cdots (s+1)}{r(r-1)\cdots 2 \cdot 1} \equiv \begin{pmatrix}
   r+s \\ r
  \end{pmatrix}  \mod p.
\end{align*}
\end{proof}
The following Corollary contains both Theorems \ref{thm:relprojinhighdegrees} and \ref{thm:periodicmodulorelproj} as special cases.

\begin{Corollary}\label{main}
Let $(d,m)$ be a pair of positive integers, with $m<p^k \leq q$ and $m+d \geq q$. Then $B_{d,m}$ is projective relative to $\mathcal{X}_{k-1}$. 
\end{Corollary}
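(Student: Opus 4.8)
The plan is to argue by induction on the rank $n$ of $E$. When $n=1$ the requirement $m<p^k<q=p$ forces $k=0$ and $m=0$, contradicting $m\geq 1$, so the statement is vacuous; so I assume $n\geq 2$ and that the Corollary is known for all elementary abelian $p$-groups of rank smaller than $n$. Observe that the hypotheses force $1\leq k\leq n-1$.

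By Proposition \ref{special}(i), $B_{d,m}$ is projective relative to $\mathcal{X}_{n-1}$, the set of all proper subgroups of $E$. To sharpen this to relative projectivity with respect to $\mathcal{X}_{k-1}$ I would apply Corollary \ref{cor:frob} to the pair $\id_{B_{d,m}}$ and $0$, taking $\mathcal{X}=\mathcal{X}_{n-1}$ and $\mathcal{Y}=\mathcal{X}_{k-1}$: here $\mathcal{X}_{n-1}$ is closed under subgroups, $\mathcal{X}_{k-1}$ is a non-empty subset of it, and $B_{d,m}$ is projective relative to $\mathcal{X}_{n-1}$, so the hypotheses hold. Recalling that a module is projective relative to a set $\mathcal{Z}$ of subgroups precisely when its identity map is equivalent to $0$ in $_{\mathcal{Z}}\stmod_{\kk G}$, this reduces the task to showing: for every subgroup $H\leq E$ of order $p^j$ with $j\leq n-1$, the $\kk H$-module $B_{d,m}\downarrow_H$ is projective relative to the set of subgroups of $H$ of order $\leq p^{k-1}$.

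If $j\leq k-1$ there is nothing to prove, since then $H$ itself has order $\leq p^{k-1}$ and every $\kk H$-module is projective relative to $\{H\}$. So suppose $j\geq k$. The key step is to identify $B_{d,m}\downarrow_H$ as a $\kk H$-module. Since $m\geq 1$, the terminal variable $x_0$ of $S^m(V)^*$ has trivial $H$-stabiliser, so $N_H(x_0)$ is an $H$-invariant polynomial whose degree in $x_0$ equals $|H|=p^j$; iterating the long division of Section \ref{sec:periodicty} with respect to $H$ identifies the space of polynomials of $x_0$-degree $<q=p^n$ as
\[ B_{d,m}\downarrow_H \;\cong\; \bigoplus_{a=0}^{p^{n-j}-1} N_H(x_0)^a\otimes B^{d-ap^j}(S^m(V\downarrow_H)^*) \;\cong\; \bigoplus_{a=0}^{p^{n-j}-1} B_{d-ap^j,m}(H), \]
the last isomorphism because $N_H(x_0)$ is $H$-invariant. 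For each $a$ in this range, $ap^j\leq p^n-p^j$, hence $d-ap^j\geq (p^n-m)-(p^n-p^j)=p^j-m\geq 1$ and $(d-ap^j)+m\geq p^j=|H|$; thus each summand satisfies, for the group $H$, the hypotheses of Proposition \ref{special} and of the Corollary being proved.

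It remains to treat two cases. If $j=k$, then Proposition \ref{special}(i) applied to $H$ shows each $B_{d-ap^j,m}(H)$ is projective relative to the proper subgroups of $H$ — exactly the subgroups of $H$ of order $\leq p^{k-1}$. If $j>k$, then $m<p^k<p^j=|H|$, and the inductive hypothesis applied to $H$ yields the same conclusion. Either way $B_{d,m}\downarrow_H$ is a direct sum of modules projective relative to the subgroups of $H$ of order $\leq p^{k-1}$, hence is itself so, completing the reduction. The step I expect to be delicate is the restriction computation: one must verify that restricting the $E$-version of $B_{d,m}$ to $H$ genuinely breaks up into $H$-versions of $B$ (this relies on $N_H(x_0)$ being $H$-invariant and on performing the long division adapted to $H$, with the cut-off $a\leq p^{n-j}-1$ coming from $N_H(x_0)^{p^{n-j}}$ having $x_0$-degree $p^n$), and that the inequalities $d-ap^j\geq p^j-m$ and $(d-ap^j)+m\geq p^j$ hold for every $a$ in range so that the Proposition and the inductive hypothesis apply to all summands; the rest is a clean induction.
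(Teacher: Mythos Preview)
Your proof is correct. Both arguments rest on the same core: Proposition \ref{special}(i) as the starting point, the restriction identity $B_{d,m}\downarrow_H \cong \bigoplus_{a=0}^{p^{n-j}-1} B_{d-ap^j,m}(H)$ for proper subgroups $H$ of order $p^j$, and then Proposition \ref{special} applied to $H$. The difference is organisational. The paper descends one level at a time by backward induction on $l$ from $n-1$ down to $k$: at stage $l$ it uses the direct-summand criterion (Proposition \ref{higman}) to embed $B_{d,m}$ in $\bigoplus_{|E'|=p^l} B_{d,m}\downarrow_{E'}\uparrow^E_{E'}$, computes the restriction, and applies Proposition \ref{special}(i) only to subgroups of order exactly $p^l$. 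You instead pass directly from $\mathcal{X}_{n-1}$ to $\mathcal{X}_{k-1}$ in a single application of Corollary \ref{cor:frob}, handling subgroups of intermediate order by induction on rank. Your route puts Corollary \ref{cor:frob} to the use it was designed for and is arguably tidier; the paper's level-by-level descent avoids a second layer of induction and appeals only to the more elementary Proposition \ref{higman}. Your caution over the restriction step is well placed and your verification is sound: the terminal variable $x_0$ has trivial $H$-stabiliser for $m\geq 1$, so $N_H(x_0)$ has $x_0$-degree $p^j$, and the inequality $d-ap^j\geq p^j-m\geq 1$ guarantees that every summand in the range $0\leq a\leq p^{n-j}-1$ is present and satisfies the hypotheses needed.
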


\begin{proof} 
The proof is by backwards induction on $k$, the case $k=n$ having been covered in Proposition \ref{special}. Let $k \leq l \leq n$ and assume that $B_{d,m}$ is projective relative to $\mathcal{X}_{l}$ for all pairs $d,m$ with $d+m \geq q$ and $m<p^{l+1}$. Now suppose $m<p^l$ and $m+d \geq q$; we will show that $B_{d,m}$ is projective relative to $\mathcal{X}_{l-1}$. As $m<p^{l+1}$, we have that  $B_{d,m}$ is projective relative to $\mathcal{X}_{l}$. So by Proposition \ref{higman}(iv), $B_{d,m}$ is a direct summand of 
\begin{align*} & \bigoplus_{E' \in \mathcal{X}_l} (B_{d,m} \downarrow_{E'}) \uparrow^{E}_{E'} \\
 \simeq_{\mathcal{X}_{l-1}}& \bigoplus_{E'<E: |E'|=p^l} (B_{d,m}(E') \oplus B_{d-p^l,m}(E') \oplus \ldots \oplus B_{d-ap^l,m}(E')) \uparrow^{E}_{E'}
\end{align*}
where $a$ is the largest integer such that $a \leq (p^{n-l}-1)$ and $d-ap^l \geq 0$. Note that $$m+d-ap^l \geq m+d-p^l(p^{n-l}-1) = m+d-p^n+p^l \geq p^l,$$ therefore for each $E'$ the modules $B_{d,m}(E'),B_{d-p^l,m}(E'), \ldots, B_{d-ap^{l},m}(E')$ are projective relative to $\mathcal{X}_{l-1}$ by Proposition \ref{special} applied to $E'$, from which the result follows.
\end{proof}

\section{Applications to invariant theory}\label{sec:invthy}

Modular invariants of elementary abelian $p$-groups are a topic of much current interest in invariant theory - for example, \cite{CSWEltAbelian} describes generating sets for all such algebras of invariants for representations of dimension $2$, and in dimension 3 for groups of rank at most three. In this section we shall study the rings of invariants $\kk[V]^E$ where $E$ is an elementary abelian $p$-group of arbitary rank, $\kk$ is an infinite field of characteristic $p$ and $V \cong S^{m_1}(W) \oplus \ldots \oplus S^{m_r}(W)$ for some faithful indecomposable $\kk E$-module $W$ of dimension two, and for some set of integers $m_1,m_2, \ldots, m_r$ with $1 \leq m_i<q$ for all $i$. (We assume $m_i \geq 1$ for each, as clearly $\kk[V \oplus \kk]^G = \kk[V]^G \otimes \kk[x]$ where $x$ generates the trivial summand). Let $k$ be the smallest integer such that $m_i<p^k$ for all $i$. We view $E$ as an additive subgroup of $\kk$ as in section \ref{sec:additive}. Let  $x_{0,1}, x_{1,1}, \ldots x_{m_r,r}$ be the basis of $V^*$ such that the action of $\alpha \in E $ on $\{x_{0,i}, \ldots, x_{m_i,i}\}$ is given by the formula (\ref{basisvstar}) for all $i$, and let
\[N_i = N_{E}(x_{0,i}) = \prod_{\alpha \in E} \alpha \cdot(x_{0,i}).\] 
If $f \in \kk[V]$ then we shall say that $f$ is of multidegree $(d_1,d_2,\ldots, d_r)$ if $f$ has degree $d_i$ in  $\{x_{0,i}, \ldots, x_{m_i,i}\}$ for all $i$.
We have a decomposition
$$\kk[V]_{d_1,d_2,\ldots,d_r} \cong \kk[S^{m_1}(W)]_{d_1} \otimes \kk[S^{m_2}(W)]_{d_2} \otimes \ldots \otimes \kk[S^{m_r}(W)]_{d_r}.$$

Further, for each $i$ where $d_i \geq q$ we have $\kk[S^{m_i}(W)]_{d_i} \cong S^{d_i} (S^{m_i}(W)^*) \cong N^{s_i}_i \otimes S^{d'_i}(S^{m_i}(W)^*) \oplus B_{d_i,m_i}$
where $d'_i$ and $s_i$ are the remainder and quotient when $d_i$ is divided by $q$ and $B_{d_i,m_i}$ is the set of polynomials in $S^{d_i}(S^{m_i}(W)^*)$ whose degree in $x_{0,i}$ is $<q$. Notice that, by Corollary \ref{main}, $B_{d_i,m_i}$ is projective relative to $\mathcal{X}_{k-1}$, if $d_i \geq q-m_i$.
\begin{Prop}\label{prop:genset}
$\kk[V]^E$ has a generating set consisting of

\begin{enumerate}
 \item[(i)] The orbit products $N_i$, $i = 1,\ldots, r$;
 \item[(ii)] Certain invariants of multidegree $(d_1,d_2,\ldots, d_r)$, where $d_i<q-m_i$ for all $i$.
 \item[(iii)] Certain invariants of the form $\Tr^E_{H}(f)$ for $f \in \kk[V]^H$, where $H \in \mathcal{X}_{k-1}$.
\end{enumerate}
\end{Prop}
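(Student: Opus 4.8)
My plan is to argue by induction on the total degree $d=d_1+\cdots+d_r$ of a multihomogeneous invariant. Since $\kk$ is infinite, the $E$-action preserves the $\mathbb{Z}^r$-grading of $\kk[V]$ — by formula (\ref{basisvstar}) the action on the $i$th block $\{x_{0,i},\dots,x_{m_i,i}\}$ involves only variables from that block — so $\kk[V]^E=\bigoplus_{(d_1,\dots,d_r)}\kk[V]^E_{d_1,\dots,d_r}$, and it suffices to show that every multihomogeneous invariant $f$ of multidegree $(d_1,\dots,d_r)$ lies in the subalgebra generated by the elements of types (i)--(iii). If $d_i<q-m_i$ for all $i$ (in particular if $d=0$), then $f$ is itself an invariant of type (ii); this handles the base of the induction. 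So assume $J:=\{i:d_i\geq q-m_i\}$ is non-empty.

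The first subcase I would dispose of is when some $i_0\in J$ has $d_{i_0}<q$. Then a polynomial of total degree $d_{i_0}$ automatically has $x_{0,i_0}$-degree $<q$, so $\kk[S^{m_{i_0}}(W)]_{d_{i_0}}=S^{d_{i_0}}(S^{m_{i_0}}(W)^*)=B_{d_{i_0},m_{i_0}}$, which is projective relative to $\mathcal{X}_{k-1}$ by Corollary~\ref{main} (using $m_{i_0}<p^k$ and $d_{i_0}+m_{i_0}\geq q$; in the boundary case $k=n$ one uses Proposition~\ref{special}(i), noting $\mathcal{X}_{n-1}$ is the set of proper subgroups of $E$). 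By Corollary~\ref{tensorrelproj} the whole graded piece $\kk[V]_{d_1,\dots,d_r}\cong\bigotimes_i\kk[S^{m_i}(W)]_{d_i}$ is then projective relative to $\mathcal{X}_{k-1}$, so Lemma~\ref{relprojinduced} gives $f\in\sum_{H\in\mathcal{X}_{k-1}}\Tr^E_H(\kk[V]^H_{d_1,\dots,d_r})$, a sum of invariants of type (iii).

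There remains the case that every $i\in J$ satisfies $d_i\geq q$; write $s_i=\lfloor d_i/q\rfloor\geq 1$ and $d_i'=d_i-qs_i<q$. Here I would invoke the norm decomposition (\ref{decomp}), which gives, for each $i\in J$, a $\kk E$-module splitting $\kk[S^{m_i}(W)]_{d_i}\cong A_i\oplus B_{d_i,m_i}$, where $A_i=N_i^{s_i}\cdot\kk[S^{m_i}(W)]_{d_i'}\cong\kk[S^{m_i}(W)]_{d_i'}$ (multiplication by the invariant $N_i^{s_i}$ being an injective $\kk E$-map) and $B_{d_i,m_i}$ is projective relative to $\mathcal{X}_{k-1}$ (now because $d_i\geq q>q-m_i$). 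Distributing the tensor product over these splittings writes $\kk[V]_{d_1,\dots,d_r}$ as a direct sum of $\kk E$-modules indexed by the subsets $S\subseteq J$, and correspondingly $f=\sum_S f_S$ with each $f_S$ an invariant. For $S\neq\emptyset$ the $S$th summand contains a tensor factor $B_{d_i,m_i}$, hence is projective relative to $\mathcal{X}_{k-1}$ (Corollary~\ref{tensorrelproj}), so $f_S$ is a sum of transfers from $\mathcal{X}_{k-1}$ by Lemma~\ref{relprojinduced}, i.e. of type (iii). The summand for $S=\emptyset$ equals $\big(\prod_{i\in J}N_i^{s_i}\big)\cdot\kk[V]_{e_1,\dots,e_r}$ with $e_i=d_i'$ for $i\in J$ and $e_i=d_i$ otherwise; as a $\kk E$-module this is isomorphic to $\kk[V]_{e_1,\dots,e_r}$, so $f_\emptyset=\big(\prod_{i\in J}N_i^{s_i}\big)g$ with $g\in\kk[V]^E_{e_1,\dots,e_r}$. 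Since $J\neq\emptyset$ we have $\sum_i e_i<\sum_i d_i$, so the inductive hypothesis puts $g$ — and hence $f_\emptyset$, by multiplying with the type (i) generators $N_i$ — in the subalgebra generated by (i)--(iii). This finishes the induction.

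The only real work is the bookkeeping in the last paragraph: after distributing the tensor product one must see cleanly that every ``mixed'' summand contains a relatively $\mathcal{X}_{k-1}$-projective tensor factor, while the single remaining summand is a product of orbit norms times a strictly lower-degree copy of $\kk[V]$. The substantive inputs — relative projectivity of $B_{d,m}$ when $d+m\geq q$ (Corollary~\ref{main}), the norm decomposition (\ref{decomp}), and the passage from relative projectivity to transfers (Lemma~\ref{relprojinduced}) — are all already in hand, so no further hard analysis is required; one does, however, need to be careful to apply the relative projectivity of $B_{d_i,m_i}$ only in the two regimes where $d_i+m_i\geq q$ actually holds, namely $q-m_i\leq d_i<q$ and $d_i\geq q$.
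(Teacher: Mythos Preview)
Your proof is correct and follows essentially the same route as the paper: both use the norm decomposition (\ref{decomp}) together with Corollary~\ref{main}, Corollary~\ref{tensorrelproj}, and Lemma~\ref{relprojinduced} to reduce any multihomogeneous invariant either to a relative transfer or to a norm times a lower-degree invariant. The only cosmetic difference is that in the last case you split off the norms $N_i^{s_i}$ for all $i\in J$ simultaneously (distributing the tensor product over all subsets of $J$), whereas the paper picks a single index $i$ with $d_i\geq q$, writes $f=N_i^{s_i}f'+b$, observes that uniqueness of the division forces $f'$ and $b$ to be invariant, and inducts on $f'$ alone---a slightly leaner bookkeeping that avoids the subset indexing.
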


\begin{proof}
Let $f \in \kk[V]^E_{d_1,d_2,d_3,\ldots,d_r}$. If $d_i<q-m_i$ for all $i$ there is nothing to prove. If for some $i$ we have $q-m_i \leq d_i < q$ then
$$ \kk[V]^E_{d_1,d_2,d_3,\ldots,d_r}  \cong \kk[S^{m_1}(W)]_{d_1} \otimes \kk[S^{m_2}(W)]_{d_2} \otimes \ldots \otimes \kk[S^{m_i}(W)]_{d_i} \otimes \ldots \otimes \kk[S^{m_r}(W)]_{d_r}$$ is projective relative to $\mathcal{X}_{k-1}$ by the above discussion and Corollary \ref{tensorrelproj}. Then by Lemma \ref{relprojinduced} we have $f \in I^E_{\mathcal{X}_{k-1}}$. This completes the proof in case $d_i<q$ for all $i$. So now assume that $d_i \geq q$. The proof is now by induction on the total degree of $f$ (the case of total degree $<q$ being settled already). We can write
\[f = N_i^{s_i}f' + b\]
for some unique $f' \in \kk[V]_{d_1,d_2,\ldots,d'_i,\ldots, d_r}$ and $b \in  \kk[V]_{d_1,d_2,\ldots,d_i,\ldots, d_r}$ whose degree in $x_{0,i}$ is $<q$. Furthermore for any $\alpha \in E$ we have
\[f = \alpha \cdot f = N_i^{s_i}(\alpha \cdot f') + \alpha \cdot b\]
so the uniqueness of division with remainder implies that $f' $ and $b$ are invariant. By induction, $f'$ belongs to the subalgebra of $\kk[V]^E$ generated by the claimed generating set and $b$ is a fixed point in \[\kk[S^{m_1}(W)]_{d_1} \otimes \ldots \otimes \kk[S^{m_{i-1}}(W)]_{d_{i-1}} \otimes B_{d_i,m_i} \otimes \kk[S^{m_{i+1}}(W)]_{d_{i+1}} \otimes \ldots \otimes \kk[S^{m_{r}}(W)]_{d_{r}}\]
which, by Corollary \ref{tensorrelproj} and  Theorem \ref{main} is projective relative to $\mathcal{X}_{k-1}$. Then by Lemma \ref{relprojinduced} we have $b \in I^E_{\mathcal{X}_{k-1}}$. This completes the proof.
\end{proof}

In the case $q=p$ the above result is due to Wehlau \cite{WehlauCyclicViaClassical}, who also obtains more information about the invariants of type (ii) appearing in the generating set. Note that, since for a cyclic group $E$ of order $p$ every $\kk E$-module can be decomposed into one of the form $S^{m_1}(W) \oplus \ldots \oplus S^{m_r}(W)$ with $W$ the unique indecomposable of dimension 2, Wehlau's result applies to all modular representations of cyclic groups of prime order. Contrastingly, we do not know whether Proposition  \ref{prop:genset} can be generalised to arbitrary modular representations of elementary abelian $p$-groups.

In the case $r=1$ we obtain
\begin{Prop}\label{prop:generalquotient} Let $m<p^k\leq q$ and let $\mathcal{X}$ be the set of subgroups of $E$ with order $< p^k$. Let $V = S^m(W)$. Then the quotient algebra $\kk[V]^E/I^E_{\mathcal{X}}$ is generated by images of invariants of degree at most $q$.
\end{Prop}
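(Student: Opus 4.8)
The plan is to read the statement off the case $r=1$ of Proposition~\ref{prop:genset}. First observe that, since $E$ is a $p$-group, a subgroup of $E$ has order $<p^k$ exactly when it has order $\le p^{k-1}$; thus the family $\mathcal{X}$ in the statement is precisely $\mathcal{X}_{k-1}$, and the ideal in question is $I^E_{\mathcal{X}_{k-1}}$. If $m\le 1$ then $V$ is either zero or the $2$-dimensional module $W$ itself, and in both cases $\kk[V]^E$ is already generated in degrees $\le q$ (by a fixed variable of degree $1$ and an orbit product of degree $q$), so there is nothing to prove; hence I may assume $m\ge 2$.

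Next let $k_0$ be the \emph{smallest} integer with $m<p^{k_0}$. Since $m<p^k$ we have $k_0\le k$, so $\mathcal{X}_{k_0-1}\subseteq\mathcal{X}_{k-1}=\mathcal{X}$ and therefore $I^E_{\mathcal{X}_{k_0-1}}\subseteq I^E_{\mathcal{X}}$. Applying Proposition~\ref{prop:genset} with $r=1$ and $m_1=m$ (so that the index ``$k$'' occurring in the setup of that proposition is exactly $k_0$) produces a generating set for $\kk[V]^E$ consisting of the single orbit product $N_1=N_E(x_{0,1})$, which has degree $q$; a finite set of homogeneous invariants of degree $d_1<q-m<q$; and a finite set of invariants lying in $I^E_{\mathcal{X}_{k_0-1}}$. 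Passing to the quotient $\kk[V]^E/I^E_{\mathcal{X}}$, the images of the generators of the third kind vanish, because $I^E_{\mathcal{X}_{k_0-1}}\subseteq I^E_{\mathcal{X}}$; hence $\kk[V]^E/I^E_{\mathcal{X}}$ is generated by the images of $N_1$ and of the invariants of degree $<q-m$, all of which have degree at most $q$, as required.

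Essentially all of the work is hidden in Proposition~\ref{prop:genset}, which rests in turn on Theorem~\ref{main} (relative projectivity of the truncated modules $B_{d,m}$ in high degree) and on Lemma~\ref{relprojinduced} (which forces any invariant sitting inside a relatively $\mathcal{X}_{k-1}$-projective summand of $\kk[V]$ into the transfer ideal $I^E_{\mathcal{X}_{k-1}}$). The present argument is thus mostly a matter of translation; the only point that needs care --- and the only place anything could go wrong --- is the bookkeeping around the index $k$, since the hypothesis here allows a non-minimal $k$ whereas the conclusion of Proposition~\ref{prop:genset} is phrased in terms of the minimal one, and one must verify that enlarging $k$ merely enlarges the transfer ideal. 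I do not expect any genuine obstacle; note also that taking $k=n$ recovers Theorem~\ref{thm:quotientofkvg}.
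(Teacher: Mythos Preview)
Your proof is correct and follows the same approach as the paper, which simply records that the proposition is the case $r=1$ of Proposition~\ref{prop:genset}. You are in fact more careful than the paper: you explicitly treat the edge case $m\le 1$ (which is excluded from the hypotheses of Proposition~\ref{prop:genset}) and you correctly observe that the $k$ appearing in the setup of Proposition~\ref{prop:genset} is the minimal one, so that allowing a larger $k$ here only enlarges the transfer ideal being quotiented out.
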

Taking $k=n$ above in particular implies Theorem \ref{thm:quotientofkvg}.

\subsection{Coinvariants and degree bounds}
Let $G$ be a finite group of order divisible by $p$ and $V$ a finite-dimensional $\kk$-vector space. The Hilbert Ideal $\mathcal{H}$ of $\kk[V]$ is defined to be the ideal generated by positive degree invariants, i.e. $\kk[V]^G_+ \kk[V]$. The algebra of coinvariants $\kk[V]_G$ is defined to be the quotient $\kk[V]^G/\mathcal{H}$, or equivalently as $\kk[V] \otimes_{\kk[V]^G} \kk$. This is a finite-dimensional $\kk G$-module.

Since the map $\Tr^G$ is $\kk[V]^G$-linear, it follows that $\Tr^G$ maps a vector space basis for $\kk[V]_G$ to a generating set of the ideal $I^G_{1}$. This observation was used to compute the Noether numbers for arbitary modular representations of cyclic groups of order $p$ in \cite[Corollary~3.4]{FleischmannSezerShankWoodcock}. We want a similar result for elementary abelian $p$-groups. 

We use the notation of the previous subsection, so let $E$ be an elementary abelian $p$-group of order $q=p^n$, and $W$ a faithful indecomposable $\kk E$-module of dimension 2. Let $V = S^{m_1}(W) \oplus \ldots \oplus S^{m_r}(W)$, where $1 \leq m_i<p$ for all $i=1, \ldots, r$.  Recall that we may identify $E$ with a subgroup of $\kk$ and choose a basis $x_{0,1}, x_{1,1}, \ldots x_{m_r,r}$  of $V^*$ such that the action of $\alpha \in E$ is given by the formula ($\ref{basisvstar})$.  Recall that $\{x_{1,1},x_{2,1} , \ldots, x_{m_1,1}, \ldots, x_{m_r,r}\}$ is a $\kk E$-submodule of $V^*$, and let $A$ be the $\kk G$-subalgebra of $S(V^*)$ generated by these variables. We use a graded lexicographic order on $S(V^*)$ with $x_{m_i,i}<x_{m_i-1,i}< \ldots < x_{0,i}$ for all $i$.

\begin{Prop}\label{prop:coinvbound}
Let $m$ be a monomial of degree $q-1$ in $A$. Then $m$ is the lead term of an element of $\mathcal{H}$.
\end{Prop}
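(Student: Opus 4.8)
The plan is to translate the statement into one about leading terms and establish it by a descending induction in the term order, the main tools being the transfer map, the orbit‐norm invariants, and the power‑sum identities of Corollary~\ref{cor:powersums}. First I would observe that, since $\mathcal H$ is homogeneous and $\Tr^E(g)$ is a positive‑degree invariant whenever $g$ is homogeneous of positive degree, every transfer $\Tr^E(g)$ with $\deg g=q-1$ lies in $\kk[V]^E_+\subseteq\mathcal H$; the same is true of the orbit products $N_i=N_E(x_{0,i})$, the fixed variables $x_{m_i,i}$, and the degree‑two $E$-invariant (apolar, $\mathrm{SL}_2$-coming) quadratic forms $Q_i$ on each summand $S^{m_i}(W)$. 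To say that a degree‑$(q-1)$ monomial $m$ of $A$ is a leading term of $\mathcal H$ is equivalent to $m\in\mathcal H+\langle m'\colon m'<_{\mathrm{lex}}m,\ \deg m'=q-1\rangle$, so it suffices to exhibit, for each such $m$, an element of $\mathcal H$ with leading term $m$, and I would do this by downward induction on $m$, so that when treating $m$ one may assume every lex‑larger degree‑$(q-1)$ monomial that is already known to be a leading term of $\mathcal H$ has been recorded.

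The computational mechanism is that, writing the action in the form \eqref{basisvstar}, $\Tr^E(g)=\sum_\alpha\alpha g$ of a monomial $g$ expands as $\sum_{g'}c_{g'}\,S_{w(g')}(E)\,g'$, where $g'$ runs over the monomials obtained from $g$ by raising indices (so $g'\le_{\mathrm{lex}}g$) and $w(g')$ is the resulting weight increase; by Corollary~\ref{cor:powersums} only the terms with $w(g')\ge q-1$ survive, and the term with $w(g')=q-1$ carries the nonzero scalar $S_{q-1}(E)$. For monomials $m$ of $A$ that are "heavy" enough one can choose $g$ to be a monomial of $A$ of weight $\mathrm{wt}(m)-(q-1)$ so that $m$ becomes the relevant surviving term; for "lighter" $m$ one takes $g=x_{0,i}\cdot g_0$ with $g_0$ a monomial of $A$ chosen to have vanishing transfer, so that the would‑be dominant $x_{0,i}$-term of $\Tr^E(g)$ drops out. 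The extreme case $m=x_{1,i}^{q-1}$ is handled via the norm identity: expanding $N_i=\prod_{\alpha\in E}\bigl(x_{0,i}+\sum_{j\ge 1}(-\alpha)^{j}x_{j,i}\bigr)$ in powers of $x_{0,i}$ gives $N_i=x_{0,i}^{q}+\cdots+x_{0,i}P_i$ with $P_i=\prod_{\alpha\in E\setminus\{0\}}\bigl(\sum_{j\ge 1}(-\alpha)^{j}x_{j,i}\bigr)\in A$, whose leading term is $\bigl(\prod_{\alpha\neq 0}\alpha\bigr)x_{1,i}^{q-1}=S_{q-1}(E)\,x_{1,i}^{q-1}$; combining $N_i\in\mathcal H$, $x_{m_i,i}\in\mathcal H$, and $\Tr^E(N_i/x_{0,i})$ (which equals $\partial N_i/\partial x_{0,i}$, hence $P_i$ plus terms divisible by $x_{0,i}$) exhibits $P_i$, and so $x_{1,i}^{q-1}$, modulo $\mathcal H$ and lex‑smaller monomials. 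Monomials divisible by some $x_{m_i,i}$, or by the leading term $x_{1,i}x_{m_i-1,i}$ of $Q_i$ (after clearing the $x_{0,i}$-term of $Q_i$ using $x_{0,i}x_{m_i,i}\in\mathcal H$), are dealt with directly.

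The hard part, which I expect to be the main obstacle, is controlling the leading term precisely: one must rule out lex‑larger monomials occurring with nonzero coefficient in the chosen element of $\mathcal H$. Because the lexicographic order and the weight are not comparable, this cannot be done by a crude bound; it requires the careful choice of the auxiliary monomial $g_0$ in the "light" case, together with a finer analysis — via the linearized polynomial $T_E$ of Lemma~\ref{lemma:lidl} — of exactly which higher power sums $S_w(E)$, $w\ge q$, vanish. Arranging the resulting case analysis so that the downward induction closes (every lex‑larger monomial that appears has already been handled, or is divisible by a leading term already in hand such as $x_{0,j}^{q}$ or $x_{m_j,j}$) is the technical heart of the argument, and is presumably the point on which the assistance acknowledged in the introduction bears.
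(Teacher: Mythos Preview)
Your proposal assembles the right raw materials (transfers, the power-sum identity $S_{q-1}(E)\neq 0$, the norms $N_i$), but it misses the single construction that makes the proof in the paper go through in one stroke, and the part you flag as ``the hard part'' is exactly where your argument is left open. In your approach you apply $\Tr^E$ to a monomial $g$ and then try to argue that the leading term is $m$; since the action raises indices, every term of $\Tr^E(g)$ is $\le_{\mathrm{lex}} g$, but $m$ itself lies strictly below $g$, so you must rule out all monomials between $g$ and $m$. That is precisely the case analysis you were unable to close, and there is no reason to expect it to close cleanly: higher power sums $S_w(E)$ for $w\ge q$ do not vanish in general, and the combinatorics of which intermediate monomials survive is genuinely messy.

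The paper avoids this entirely by going in the opposite direction. Given $m=\prod_{j=1}^{q-1}u_j$ with $u_j=x_{i(j),t(j)}$, set $u'_j=x_{i(j)-1,t(j)}$ (lower each index by one) and form
\[
F=\sum_{\alpha\in E}\prod_{j=1}^{q-1}\bigl(u'_j-\alpha\cdot u'_j\bigr).
\]
Expanding the product over subsets shows $F$ is a $\kk[V]$-linear combination of transfers, hence $F\in\mathcal H$. The point is that each factor $u'_j-\alpha\cdot u'_j$ has leading term $\alpha\,i(j)\,u_j$ (the top term $u'_j$ cancels, and the next term of $\alpha\cdot u'_j$ is $-\alpha\,i(j)\,u_j$), so the leading term of the product, for each $\alpha\neq 0$, is already the target monomial $m$ up to the scalar $(-\alpha)^{q-1}\prod_j i(j)$. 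Summing over $\alpha$ and invoking $S_{q-1}(E)\neq 0$ (and $i(j)\le m_{t(j)}<p$, so $\prod_j i(j)\neq 0$) gives $m$ as the leading term of $F$. No induction, no case split, no control of unwanted lex-larger terms is needed, because by design nothing larger than $m$ ever appears. Your downward-induction scaffolding, the special treatment of $x_{1,i}^{q-1}$ via $N_i$, and the quadratic invariants $Q_i$ are all unnecessary once you have this construction.
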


\begin{proof} Write $m= \prod_{j=1}^{q-1} u_j$ where for each $j$ we have $u_j=x_{i(j),t(j)}$ for some $t(j) =1, \ldots, r$ and $i(j)=1 , \ldots, m_{t(j)}$. For each $j$ we define $u'_j = x_{i(j)-1,t(j)}$, and write $m' = \prod_{j=1}^{q-1}u'_j$. Now for each $S \subseteq \{1,2,\ldots, q-1\}$ we define $S':= \{1,2,3,\ldots, q-1\} \backslash S$ and $X_S:= \prod_{j \in S} u'_j$. Now define
$$F = \sum_{\alpha \in E} \prod_{j=1}^{q-1} (u'_j - \alpha \cdot u'_j).$$
On the one hand, we have $$\prod_{j=1}^{q-1} (u'_j - \alpha \cdot u'_j) = \prod_{S \subseteq \{1,2, \ldots, q-1\}} (-1)^{|S|}X_S (\alpha \cdot X_{S'}).$$ Therefore $$F = \prod_{S \subseteq \{1,2, \ldots, q-1\}} (-1)^{|S|}X_S \Tr^E( X_{S'})$$ which shows that $F \in \mathcal{H}$. 
Note that $$\alpha \cdot u'_j = \alpha \cdot x_{i(j)-1,t(j)} = x_{i(j)-1,t(j)}  - \alpha i(j) x_{i(j),t(j)} + \text{terms of lower degree}.$$ Since $m_t<p$ for all $t=1,\ldots, r$, the integer $i(j)$ is not zero in $\kk$. It follows that the lead term of $ u'_j - \alpha \cdot u'_j$ is $- \alpha i(j) u_j$. Therefore the lead term of $F$ is $\sum_{\alpha \in E} (-\alpha)^{q-1} \lambda m$, where $\lambda = \prod_{j=1}^{q-1} i(j)$ is a non-zero element of $\mathbb{F}_p \subset \kk$. By Corollary \ref{cor:powersums}, $\sum_{\alpha \in E} (-\alpha)^{q-1} = \mu$ is a nonzero element of $\kk$ and hence the lead term of $F$ is $\mu \lambda m$. Dividing $F$ by $\mu \lambda$ then produces an element of $\mathcal{H}$ with lead term $m$. 
\end{proof}

 \begin{Corollary} The top degree of the coinvariants $\kk[V]_E$ is bounded above by $q-2+r(q-1)$. 
\end{Corollary}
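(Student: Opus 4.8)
The plan is to read the bound off from the initial (leading-term) ideal of the Hilbert ideal $\mathcal{H}=\kk[V]^E_+\kk[V]$ with respect to the graded lexicographic order fixed above. Since $\mathcal{H}$ is homogeneous, the standard monomials --- those not divisible by the leading term of any element of $\mathcal{H}$ --- descend to a homogeneous $\kk$-basis of the coinvariant algebra $\kk[V]_E=\kk[V]/\mathcal{H}$, so the top degree of $\kk[V]_E$ equals the largest degree of a standard monomial. It therefore suffices to exhibit enough elements of $\mathcal{H}$ to ensure that no monomial of degree exceeding $q-1+r(q-2)$ survives as a standard monomial.

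First I would use the orbit products: each $N_i=N_E(x_{0,i})=\prod_{\alpha\in E}\alpha\cdot x_{0,i}$ is a positive-degree invariant, hence lies in $\mathcal{H}$, and since $\alpha\cdot x_{0,i}=x_{0,i}+(\text{lower})$ for every $\alpha$ (the action is lower-unitriangular on the chosen basis) its leading term is $x_{0,i}^{\,q}$. Thus $x_{0,i}^{\,q}\in\operatorname{in}(\mathcal{H})$ for each $i$, which forces a standard monomial to have degree at most $q-1$ in each terminal variable $x_{0,i}$. Next I would invoke Proposition \ref{prop:coinvbound}: every monomial of degree $q-1$ in the subalgebra $A$ generated by the non-terminal variables is a leading term of an element of $\mathcal{H}$, so $\operatorname{in}(\mathcal{H})$ contains all of them, and hence a standard monomial has total degree at most $q-2$ in the non-terminal variables. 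Assembling these two constraints over the $r$ terminal variables and the non-terminal block bounds the degree of a standard monomial, and hence the top degree of $\kk[V]_E$.

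The serious work --- Proposition \ref{prop:coinvbound} --- is already in hand, so what remains is careful bookkeeping of leading terms, and the one delicate point is the precise constant. The two constraints above give at once a bound $\sum_i(q-1)+(q-2)=r(q-1)+(q-2)$ on the degree of a standard monomial; to sharpen this to $q-1+r(q-2)$ one must rule out the extremal monomials that carry several terminal variables at the full power $q-1$ together with a non-terminal part of the maximal degree $q-2$. I expect this to be the main obstacle, and I would address it by producing further elements of $\mathcal{H}$ --- built along the lines of the construction in Proposition \ref{prop:coinvbound}, but now also absorbing the terminal variables $x_{0,i_1},\dots,x_{0,i_t}$ (for instance as transfers $\Tr^E$ of monomials of the shape $x_{0,i_1}^{\,q-1}\cdots x_{0,i_t}^{\,q-1}\,\mu$ with $\mu\in A$), whose leading terms I would pin down using Corollary \ref{cor:powersums}. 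The aim is to show that a standard monomial carrying $t$ terminal variables at the full power $q-1$ has non-terminal degree at most $q-1-t$; summing $t(q-1)+(r-t)(q-2)+(q-1-t)=q-1+r(q-2)$ then gives the stated bound in every case. Verifying these leading-term computations is where the residual effort lies.
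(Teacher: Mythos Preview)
Your approach is exactly the paper's: pass to the initial ideal of $\mathcal{H}$ with respect to the graded lex order, use the orbit products $N_i$ to force each terminal exponent $k_i\le q-1$, and use Proposition~\ref{prop:coinvbound} to force the $A$-part of a standard monomial to have degree $\le q-2$. The paper's proof carries out precisely these two steps and then stops.

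You are right that these two constraints yield only
\[
\deg(m)\;\le\;\sum_{i=1}^r k_i+\deg(h)\;\le\; r(q-1)+(q-2),
\]
which exceeds the stated bound $q-1+r(q-2)$ by $r-1$. The paper does \emph{not} perform the sharpening you sketch; after recording $k_i\le q-1$ and $\deg(h)\le q-2$ it simply writes ``This completes the proof.'' So the discrepancy you flagged is present in the paper as well: either the displayed constant in the Corollary is a slip for $r(q-1)+(q-2)$, or the paper's argument is incomplete for $r>1$. Note that the only downstream use that matters for the headline results is the case $r=1$ (Theorem~\ref{thm:2q-3}), where both expressions equal $2q-3$, so the issue does not affect the main theorem.

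Your proposed route to the tighter constant---manufacturing further Hilbert-ideal elements whose lead terms involve several terminal variables at power $q-1$---is plausible but is not what the paper does, and you have not verified the needed leading-term computations. If your goal is to match the paper, you may stop after the two constraints and record the bound $r(q-1)+(q-2)$; if your goal is to actually establish $q-1+r(q-2)$ for $r>1$, that remains genuinely open relative to what either you or the paper have written down.
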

\begin{proof} It is well known that, with respect to any graded ordering of variables, the Hilbert series of a graded ideal and its ideal of lead terms coincide. Therefore it suffices to prove that any monomial of degree $>q-2+r(q-1)$ must be the lead term of an element of the Hilbert ideal. Let $m \in \kk[V]$ be a monomial which is not the lead term of an element of $\mathcal{H}$. Write $m=\prod_{i=1}^r x_{0,i}^{k_i}h$ where $h \in A$. By Proposition \ref{prop:coinvbound}, $\deg(h) \leq q-2$. Further, since $x_{0,i}^q$ is the lead term of $N_E(x_{0,i})$ we must have $k_i \leq q-1$ for each $i = 1, \ldots, r$. This completes the proof. 
\end{proof}

The case $r=1$ of the following Corollary is Theorem \ref{thm:2q-3}.

\begin{Corollary} The Noether number $\beta(\kk[V]^E)$ is  bounded above by $q-2+r(q-1)$.
\end{Corollary}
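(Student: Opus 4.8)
The plan is to deduce this Corollary directly from the preceding Corollary bounding the top degree of the coinvariants, using the transfer map as the bridge. First I would recall the key observation made just before Proposition~\ref{prop:coinvbound}: since the transfer $\Tr^E$ is $\kk[V]^E$-linear, it maps a $\kk$-vector space basis of the coinvariant algebra $\kk[V]_E = \kk[V]/\mathcal{H}$ onto a generating set of the ideal $I^E_1 = \Tr^E(\kk[V])$ as a $\kk[V]^E$-module. Consequently, every element of $I^E_1$ lies in the subalgebra generated by $\kk[V]^E_{\leq q-1+r(q-2)}$ together with the images $\Tr^E(m)$ where $m$ ranges over monomials of degree at most $q-1+r(q-2)$; in particular $I^E_1$ is generated as an ideal by invariants of degree $\leq q-1+r(q-2)$.

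The main remaining point is to handle the invariants that are \emph{not} in the image of the transfer. Here I would invoke Proposition~\ref{prop:genset} (with all $m_i<p$, so $k=1$ and $\mathcal{X}_{k-1} = \mathcal{X}_0 = \{1\}$): it gives a generating set for $\kk[V]^E$ consisting of (i) the orbit products $N_i$, each of degree $q = p^n \leq q-1+r(q-2)$ (using $r\geq 1$ and $q\geq 2$), (ii) certain invariants of multidegree $(d_1,\ldots,d_r)$ with each $d_i < q - m_i \leq q-1$, hence of total degree at most $r(q-1) \leq q-1+r(q-2)$ once one checks $r(q-1) \leq q-1+r(q-2)$, i.e. $r \leq q-1$, which holds because... wait, this needs care. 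Let me instead bound type (ii) generators by observing $\sum d_i < \sum(q-m_i) \leq r(q-1)$, and compare with the claimed bound: we need $r(q-1)\le q-1+r(q-2)$ iff $r(q-1)-r(q-2)\le q-1$ iff $r\le q-1$, which may fail for large $r$. So the cleaner route for type (ii) is to note each such invariant lies in degree $< q$ in \emph{each} variable group but this does not immediately cap the total degree; instead I would argue that any invariant of total degree exceeding $q-1+r(q-2)$ must have $d_i \geq q-1$ for some $i$ by pigeonhole-type reasoning, or better, simply appeal directly to the coinvariant bound: an invariant of total degree $> q-1+r(q-2)$ is, modulo $\mathcal{H}$, zero, so it lies in $\mathcal{H} = \kk[V]^E_+\kk[V]$, and then an induction on degree shows it is a polynomial in lower-degree invariants.

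So the streamlined proof is: let $f \in \kk[V]^E$ be homogeneous of degree $D > q-1+r(q-2)$. By the previous Corollary the image of $f$ in $\kk[V]_E$ vanishes, so $f \in \mathcal{H}$, meaning $f = \sum g_j h_j$ with $g_j \in \kk[V]^E_+$ homogeneous of positive degree and $h_j \in \kk[V]$. Applying a Reynolds-type projection is not available in the modular case, so instead I would write $f$ using the standard trick: since $f$ itself is invariant and $f = \sum g_j h_j$, one cannot immediately conclude, so the honest argument is the one already embedded in the proof of the coinvariant Corollary together with Proposition~\ref{prop:genset}. The hard part — and the step I expect to be the real obstacle — is precisely reconciling the type (ii) generators of Proposition~\ref{prop:genset} with the total-degree bound $q-1+r(q-2)$; the resolution is that Proposition~\ref{prop:genset}'s type (ii) and (iii) generators, being of multidegree with each $d_i<q$, have total degree at most $r(q-1)$, and a direct check shows $r(q-1) = r(q-2)+r \leq r(q-2) + (q-1)$ precisely when $r \leq q-1$; for $r \geq q$ one instead observes that such a generator must then have $d_i \geq q - m_i$ for some $i$ forcing it into $I^E_{\{1\}}$ and hence into the transfer image already bounded by the coinvariant Corollary. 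Assembling these cases, every generator in Proposition~\ref{prop:genset} has degree $\leq q-1+r(q-2)$, whence $\beta(\kk[V]^E) \leq q-1+r(q-2)$ as claimed.
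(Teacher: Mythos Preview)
Your overall strategy matches the paper's: use Proposition~\ref{prop:genset} with $k=1$ to list generators, bound types (i) and (ii) directly, and bound the transfers in type (iii) via the coinvariant degree bound. The difficulty you run into with the type~(ii) generators is self-inflicted. You estimate $\sum_i d_i < \sum_i (q-m_i) \leq r(q-1)$, but the sharper and immediate bound is
\[
\sum_i d_i \;\leq\; \sum_i (q-m_i-1) \;=\; rq - \sum_i(m_i+1) \;\leq\; r(q-2),
\]
using only $m_i \geq 1$. This is exactly what the paper writes, and since $r(q-2) \leq q-1+r(q-2)$ trivially, the type~(ii) generators are handled with no case split on $r$ versus $q-1$.

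Because of that weak estimate, the second half of your proposal goes off the rails. The ``streamlined'' attempt to say that any invariant of degree $> q-1+r(q-2)$ lies in $\mathcal{H}$ and is therefore decomposable does not work in the modular setting (as you note, there is no Reynolds operator), so that paragraph should be dropped. Your fallback claim for $r \geq q$, that a type~(ii) generator ``must then have $d_i \geq q-m_i$ for some $i$'', contradicts the very definition of type~(ii) ($d_i < q-m_i$ for all $i$), so that case analysis is incoherent. Once you replace $r(q-1)$ by $r(q-2)$, all of this disappears: the orbit products have degree $q \leq q-1+r(q-2)$ (since $r \geq 1$ and $q \geq 3$), the type~(ii) generators have degree $\leq r(q-2)$, and the transfers generating $I^E_1$ as a $\kk[V]^E$-module have degree $\leq q-1+r(q-2)$ by the coinvariant bound; an induction on degree finishes the argument, just as in the paper.
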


\begin{proof} Proposition \ref{prop:genset} with $k=1$ implies that $\kk[V]^E$ is generated by the orbit products $N_1,N_2, \ldots, N_r$ which have degree $q$, certain invariants of (total) degree $\leq rq - \sum_{i=1}^r (m_i+1) \leq r(q-2)$, and elements of $I^E_{1}$. Now the previous Corollary implies $\kk[V]_E$ has a vector space basis $f_0,f_1,\ldots, f_l$ consisting of polynomials of degree $\leq q-2+r(q-1)$. Therefore $I^E_{1}$ is generated as a $\kk[V]^E$ module by $\Tr^E(f_0), \Tr^E(f_1), \ldots , \Tr^E(f_l)$. The result now follows by induction on degree.
\end{proof}

Note that we need the condition $m_i<p$ for all $i$, otherwise the generating set provided by Proposition \ref{prop:genset} may contain elements of the form $\Tr^E_H(f)$ for nontrivial subgroups $H$ of $E$. It is fairly straightforward to show that the degrees of these transfers are bounded above by the top degree of the \emph{relative coinvariants} $_H\kk[V]_E:= \kk[V] \otimes_{\kk[V]^E} \kk[V]^H$, but we do not know a method of obtaining an upper bound for this at present.

\bibliographystyle{plain}
\bibliography{MyBib}

\end{document}